\DeclareMathSymbol{\smallsetminus}{\mathbin}{AMSb}{"72}
\renewcommand{\citepunct}{;\penalty\citemidpenalty\ }
\newtheorem{alphthm}{Theorem}
\newtheorem{theorem}{Theorem}[section]
\newtheorem{lemma}[theorem]{Lemma}
\newtheorem{proposition}[theorem]{Proposition}
\newtheorem{corollary}[theorem]{Corollary}
\theoremstyle{definition}
\newtheorem{definition}[theorem]{Definition}
\newtheorem{example}[theorem]{Example}
\theoremstyle{remark}
\newtheorem{remark}[theorem]{Remark}
\newtheoremstyle{step}{.25\baselineskip\@plus.1\baselineskip\@minus.1\baselineskip}{.25\baselineskip\@plus.1\baselineskip\@minus.1\baselineskip}{\itshape}{}{\bfseries}{\bfseries .}{5pt plus 1pt minus 1pt}{\thmname{#1}\thmnumber{ #2}\thmnote{ \normalfont(#3)}}
\theoremstyle{step}
\newtheorem{step}{Step}[theorem]
\newtheoremstyle{cited}{.5\baselineskip\@plus.2\baselineskip\@minus.2\baselineskip}{.5\baselineskip\@plus.2\baselineskip\@minus.2\baselineskip}{\itshape}{}{\bfseries}{\bfseries .}{5pt plus 1pt minus 1pt}{\thmname{#1}\thmnumber{ #2}\thmnote{ \normalfont#3}}
\theoremstyle{cited}
\newtheorem{citedthm}[theorem]{Theorem}
\newtheorem{citedconj}[theorem]{Conjecture}
\newtheorem{citedlem}[theorem]{Lemma}
\newtheorem{citedprop}[theorem]{Proposition}
\newtheoremstyle{citeddef}{.5\baselineskip\@plus.2\baselineskip\@minus.2\baselineskip}{.5\baselineskip\@plus.2\baselineskip\@minus.2\baselineskip}{}{}{\bfseries}{\bfseries .}{5pt plus 1pt minus 1pt}{\thmname{#1}\thmnumber{ #2}\thmnote{ \normalfont#3}}
\theoremstyle{citeddef}
\newtheorem{citeddef}[theorem]{Definition}
\newtheorem{citedconstr}[theorem]{Construction}
\DeclareMathOperator{\Bs}{Bs}
\DeclareMathOperator{\Bminus}{\mathcode`\-=8704 \mathbf{B}_-}
\DeclareMathOperator{\Char}{char}
\DeclareMathOperator{\Div}{Div}
\DeclareMathOperator{\Ext}{Ext}
\DeclareMathOperator{\Hom}{Hom}
\DeclareMathOperator{\NSFR}{NSFR}
\DeclareMathOperator{\NNef}{NNef}
\DeclareMathOperator{\Nklt}{Nklt}
\DeclareMathOperator{\SB}{\mathbf{B}}
\DeclareMathOperator{\Spec}{Spec}
\DeclareMathOperator{\Supp}{Supp}
\DeclareMathOperator{\WDiv}{WDiv}
\DeclareMathOperator{\codim}{codim}
\DeclareMathOperator{\eval}{eval}
\DeclareMathOperator{\im}{im}
\DeclareMathOperator{\length}{length}
\renewcommand{\AA}{\mathbf{A}}
\newcommand{\DD}{\mathbf{D}}
\newcommand{\FF}{\mathbf{F}}
\newcommand{\HH}{\mathbf{H}}
\newcommand{\PP}{\mathbf{P}}
\newcommand{\QQ}{\mathbf{Q}}
\newcommand{\RR}{\mathbf{R}}
\newcommand{\ZZ}{\mathbf{Z}}
\newcommand{\kk}{\mathbf{k}}
\newcommand{\cJ}{\mathcal{J}}
\newcommand{\cO}{\mathcal{O}}
\newcommand{\cP}{\mathcal{P}}
\newcommand{\cQ}{\mathcal{Q}}
\newcommand{\fa}{\mathfrak{a}}
\newcommand{\fb}{\mathfrak{b}}
\newcommand{\fm}{\mathfrak{m}}
\newcommand{\fn}{\mathfrak{n}}
\newcommand{\fp}{\mathfrak{p}}
\newcommand{\Tr}{\mathrm{Tr}}
\newcommand{\num}{\mathrm{num}}
\newcommand{\perf}{\mathrm{perf}}
\newcommand{\qc}{\mathrm{qc}}
\newcommand{\red}{\mathrm{red}}
\begin{document}
\title[The gamma construction and asymptotic invariants of line bundles]{The
gamma construction and asymptotic invariants\\of line bundles over arbitrary
fields}
\author{Takumi Murayama}
\thanks{This material is based upon work supported by the National Science
Foundation under Grant Nos.\ DMS-1501461 and DMS-1701622}
\keywords{gamma construction, imperfect fields, asymptotic cohomological
functions, restricted base loci, non-nef loci} 

\subjclass[2010]{Primary 14C20; Secondary 13A35, 14F17, 14B05}
\address{Department of Mathematics\\University of Michigan\\
Ann Arbor, MI 48109-1043\\USA}
\email{\href{mailto:takumim@umich.edu}{takumim@umich.edu}}
\urladdr{\url{http://www-personal.umich.edu/~takumim/}}

\makeatletter
  \hypersetup{
    pdftitle=The gamma construction and asymptotic invariants of line bundles
    over arbitrary fields,
    pdfsubject=\@subjclass,pdfkeywords=\@keywords
  }
\makeatother

\begin{abstract}
  We extend results on asymptotic invariants of line bundles on complex
  projective varieties to projective varieties over arbitrary fields.
  To do so over imperfect fields, we prove a scheme-theoretic version of the
  gamma construction of Hochster and Huneke to reduce to the setting where the
  ground field is $F$-finite.
  Our main result uses the gamma construction to extend the ampleness criterion
  of de Fernex, K\"uronya, and Lazarsfeld using asymptotic cohomological
  functions to projective varieties over arbitrary fields, which was previously
  known only for complex projective varieties.
  We also extend Nakayama's description of the restricted base locus to
  klt or strongly $F$-regular varieties over arbitrary fields.
\end{abstract}

\maketitle

\section{Introduction}\label{sect:intro}
Let $X$ be a projective variety over a field $k$.
When $k$ is the field of complex numbers, results from the minimal
model program can be used to understand the birational geometry of $X$.
When $k$ is an arbitrary field, however, many of these results and the
tools used to prove them are unavailable.
The most problematic situation is when $k$ is an imperfect field of
characteristic $p > 0$, in which case there are three major difficulties.
To begin with, since $k$ is of characteristic $p > 0$,
\begin{enumerate}[label=(\Roman*),ref=\Roman*]
  \item\label{problem:nores}
    Resolutions of singularities are not known to exist (see \cite{Hau10}), and
  \item\label{problem:novanishing}
    Vanishing theorems are false (Raynaud \cite{Ray78}).
\end{enumerate}
A common workaround for (\ref{problem:nores}) is to use de Jong's theory of
alterations \cite{dJ96}.
Circumventing (\ref{problem:novanishing}), on the other hand, is more difficult.
One useful approach is to exploit the Frobenius morphism $F\colon X \to X$ and
its Grothendieck trace $F_*\omega_X^\bullet \to \omega_X^\bullet$; see
\cite{PST17}.
For imperfect fields, however, this approach runs into another problem:
\begin{enumerate}[label=(\Roman*),ref=\Roman*,start=3]
  \item\label{problem:nofrobtech}
    Most applications of Frobenius techniques require $k$ to be
    \textsl{$F$-finite,} i.e., satisfy $[k:k^p] < \infty$. 
\end{enumerate}
This last issue arises since Grothendieck duality cannot be applied to the
Frobenius if it is not finite.
Recent advances in the minimal model program over imperfect fields due to Tanaka
\cite{Tan18,Tan} suggest that it would be worthwhile to develop
a systematic way to deal with (\ref{problem:nofrobtech}).
\medskip
\par Our first goal is to provide such a systematic way to reduce
to the case when $k$ is $F$-finite.
While passing to a perfect closure of $k$ fixes the $F$-finiteness issue,
this operation can change the singularities of $X$ drastically.
To preserve singularities, we prove the following scheme-theoretic version of
the gamma construction of Hochster and Huneke \cite{HH94}.
\begin{alphthm}\label{thm:gammaconstintro}
  Let $X$ be a scheme essentially of finite type over a field
  $k$ of characteristic $p > 0$, and let $\cQ$ be a set
  of properties in the following list:
  local complete intersection, Gorenstein, Cohen--Macaulay, $(S_n)$, regular,
  $(R_n)$, normal, weakly normal, reduced,
  strongly $F$-regular, $F$-pure, $F$-rational, $F$-injective.
  Then, there exists a purely inseparable field extension $k \subseteq k^\Gamma$
  such that $k^\Gamma$ is $F$-finite and such that the projection morphism
  \[
    \pi^\Gamma\colon X \times_k k^\Gamma \longrightarrow X
  \]
  is a homeomorphism that identifies $\cP$ loci for every $\cP \in \cQ$.
\end{alphthm}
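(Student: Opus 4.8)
The plan is to reduce, for each point $x\in X$, the assertion ``$\cO_{X,x}$ has $\cP$ $\iff$ $\cO_{X^\Gamma,x^\Gamma}$ has $\cP$'' to the known transfer behaviour of $\cP$ along a faithfully flat local homomorphism whose closed fibre is a \emph{field}. Fix a $p$-basis $\{x_\lambda\}_{\lambda\in\Lambda}$ of $k$, and for $\Gamma\subseteq\Lambda$ put $k^\Gamma\coloneqq\bigcup_{e\ge0}k\bigl[x_\lambda^{1/p^e}:\lambda\in\Gamma\bigr]$, so that $k^\Gamma$ is a field with $p$-basis $\{x_\lambda:\lambda\notin\Gamma\}$; hence $k^\Gamma$ is $F$-finite precisely when $\Lambda\smallsetminus\Gamma$ is finite, and $k\subseteq k^\Gamma$ is purely inseparable. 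The extension being integral, radicial, and surjective on spectra, $\pi^\Gamma$ is a universal homeomorphism for every $\Gamma$; in particular each $x$ has a unique preimage $x^\Gamma$, and since $\Spec\bigl(\cO_{X,x}\otimes_kk^\Gamma\bigr)\to\Spec\cO_{X,x}$ is a homeomorphism the ring $\cO_{X,x}\otimes_kk^\Gamma$ is already local and equals $\cO_{X^\Gamma,x^\Gamma}$. Thus $\cO_{X,x}\to\cO_{X^\Gamma,x^\Gamma}$ is faithfully flat local with closed fibre $\kappa(x)\otimes_kk^\Gamma$.

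The heart of the argument---and the step I expect to be the main obstacle---is choosing $\Gamma$ cofinite in $\Lambda$ so that $\kappa(x)\otimes_kk^\Gamma$ is a field for \emph{every} $x\in X$ simultaneously. Because $X$ is essentially of finite type over $k$ it descends to a scheme $X_0$ essentially of finite type over a finitely generated subfield $k_0\subseteq k$, with $X\cong X_0\times_{k_0}k$; since $k_0$ is finitely generated over $\FF_p$, there is a \emph{finite} subset $\Sigma\subseteq\Lambda$ such that $\{x_\lambda:\lambda\notin\Sigma\}$ is $p$-independent over $k_0$. I would then show that for any $\Gamma\subseteq\Lambda\smallsetminus\Sigma$ the set $\{x_\lambda:\lambda\in\Gamma\}$ remains $p$-independent in every residue field $\kappa(x)$ of $X$: informally, the inseparability of $\kappa(x)/k$ is already ``defined over $k_0$'' and so cannot force any $x_\lambda$ with $\lambda\notin\Sigma$ to acquire a $p$-th root, the delicate point being to disentangle the separable and inseparable parts of $\kappa(x)/k_0$ after the base change to $k$. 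Granting this, $\{x_\lambda:\lambda\in\Gamma\}$ extends to a $p$-basis of $\kappa(x)$, so $\kappa(x)\otimes_kk^\Gamma=\kappa(x)\bigl[x_\lambda^{1/p^\infty}:\lambda\in\Gamma\bigr]$ is a field. Taking $\Gamma=\Lambda\smallsetminus\Sigma$, which is cofinite so that $k^\Gamma$ is $F$-finite, completes this step; this is where the essentially-finite-type hypothesis is really used, and it is the globalized form of the key lemma behind the construction in \cite{HH94}.

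It remains to run the transfer arguments. Since each closed fibre $\kappa(x)\otimes_kk^\Gamma$ is a field, it is in particular regular, Gorenstein, Cohen--Macaulay, normal, reduced, $(R_n)$, $(S_n)$, a local complete intersection, and weakly normal, and---being essentially of finite type over the $F$-finite field $k^\Gamma$, hence a regular $F$-finite ring---also strongly $F$-regular, $F$-pure, $F$-rational, and $F$-injective. For each $\cP$ on the list I then invoke the corresponding statement for a faithfully flat local homomorphism $(A,\fm)\to(B,\fn)$: that $B$ has $\cP$ if and only if $A$ has $\cP$ and $B/\fm B$ has $\cP$. For the ``classical'' properties these are standard (EGA~IV, Matsumura); for weak normality one uses the corresponding ascent/descent result along flat maps whose fibres are fields; and for the $F$-singularities the analogous statements for faithfully flat maps with regular $F$-finite closed fibre---this last family being where the possible non-$F$-finiteness of $X$ itself makes the required input non-formal. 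Because our fibres satisfy every $\cP$, each such statement collapses to ``$\cO_{X,x}$ has $\cP$ $\iff$ $\cO_{X^\Gamma,x^\Gamma}$ has $\cP$''; as $\pi^\Gamma$ is a homeomorphism, this says exactly that $\pi^\Gamma$ identifies $\cP$-loci, completing the proof.
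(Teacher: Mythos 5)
The central step of your argument---choosing a single cofinite $\Gamma \subseteq \Lambda$ so that $\kappa(x) \otimes_k k^\Gamma$ is a field for \emph{every} $x \in X$ simultaneously---is false, and this is exactly the difficulty the paper's proof is structured to avoid. Take $k = \FF_p(x_1,x_2,\dots)$ with $p$-basis $\Lambda = \{x_n\}_{n\ge 1}$ and $X = \AA^1_k$. For each $n$ the polynomial $t^p - x_n$ is irreducible over $k$, giving a closed point $x$ with residue field $\kappa(x) = k(x_n^{1/p})$, in which $x_n$ is a $p$-th power; hence $\kappa(x) \otimes_k k[x_n^{1/p}] \cong \kappa(x)[T]/\bigl((T-x_n^{1/p})^p\bigr)$ is non-reduced, and $\kappa(x)\otimes_k k^\Gamma$ fails to be a field whenever $n \in \Gamma$. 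Since this happens for every $n$, no nonempty $\Gamma$---let alone a cofinite one---makes all closed fibers fields, even though $X$ descends to $\AA^1_{\FF_p}$ and the entire $p$-basis is $p$-independent (indeed algebraically independent) over $k_0 = \FF_p$. The residue fields created by base change from $k_0$ to $k$ can entangle with the $p$-basis of $k$ in ways invisible over $k_0$, so your heuristic that ``the inseparability is defined over $k_0$'' does not hold. The true statement, \cite[Lem.\ 6.13$(b)$]{HH94}, is only pointwise: the cofinite set for which the closed fiber at $x$ is a field depends on $x$, and infinitely many points can force infinitely many exclusions.

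Consequently the proof needs a gluing mechanism that your proposal omits entirely. The paper requires in addition that the $\cP$-locus be \emph{open} in noetherian $F$-finite schemes (condition $(\ref{axiom:gammaopen})$ of Proposition \ref{prop:gammaconstconds}), notes that the open sets $\pi^\Gamma(\cP(X^\Gamma))$ grow as $\Gamma$ shrinks (by descent along the faithfully flat transition maps $\pi^{\Gamma\Gamma'}$), and uses the ascending chain condition on open subsets of the noetherian space $X$ to pick $\Gamma_0$ with $\pi^{\Gamma_0}(\cP(X^{\Gamma_0}))$ maximal; the pointwise field-fiber statement then forces this maximum to equal $\cP(X)$. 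Your final transfer step inherits the same problem---at points where the fiber is not a field you can conclude nothing from ``flat local with field fiber''---and, separately, for weak normality and the $F$-singularities the ascent inputs the paper actually uses are specific to the gamma construction (\cite[Cor.\ 3.31]{Has10}, \cite[Prop.\ 5.4]{Ma14}, \cite[Lem.\ 2.3]{Vel95}, and the proof of \cite[Lem.\ 2.9]{EH08}) rather than general ascent along faithfully flat local maps with regular closed fiber, which is not available in the non-$F$-finite setting. (For the local complete intersection, Gorenstein, Cohen--Macaulay, and $(S_n)$ loci none of this is needed: they ascend and descend along flat local homomorphisms with local complete intersection fibers, so every cofinite $\Gamma$ works.)
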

\par See \S\ref{sect:fsings} for definitions of $F$-singularities in
the non-$F$-finite setting.
We in fact prove a slightly stronger version of Theorem
\ref{thm:gammaconstintro} that allows $k$ to be replaced by a complete local
ring and allows finitely many schemes instead of just one; see Theorem
\ref{thm:gammaconst}.
We use this added flexibility to prove that
klt and log canonical pairs can be preserved under the gamma construction for
surfaces and threefolds (Corollary \ref{cor:gammaconst}),
providing alternative proofs for the reduction steps in \cite[Thm.\ 3.8]{Tan18}
and \cite[Thm.\ 4.12]{Tan}.
\par We note that parts of Theorems \ref{thm:gammaconstintro} and
\ref{thm:gammaconst} are new even if $X$ is affine.
Namely, the statements for weak normality are completely new, and the statements
for $F$-purity and $F$-injectivity in Theorem \ref{thm:gammaconst} were
previously only known when the scheme $X$ is the spectrum of a complete local
ring \citeleft\citen{EH08}\citemid Lem.\ 2.9\citepunct\citen{Ma14}\citemid
Prop.\ 5.6\citeright.
\medskip
\par In the remainder of this paper, we give applications of the gamma
construction (Theorem \ref{thm:gammaconstintro}) to the theory of asymptotic
invariants of line bundles over arbitrary fields, in the spirit of recent work
of Cutkosky \cite{Cut15}, Fulger--Koll\'ar--Lehmann \cite{FKL16}, Birkar
\cite{Bir17}, and Burgos Gil--Gubler--Jell--K\"unnemann--Martin \cite{BGGJKM}.
See \cite{ELMNP05} for a survey of the theory for smooth complex varieties.
While the main difficulty lies in positive characteristic, we will also prove
statements over fields of characteristic zero that are not necessarily
algebraically closed.
\medskip
\par Our first application provides a characterization of ampleness based on the
asymptotic growth of higher cohomology groups.
It is well known that if $X$ is a projective variety of dimension $n > 0$, then
$h^i(X,\cO_X(mL)) = O(m^n)$ for every Cartier divisor $L$; see \cite[Ex.\
1.2.20]{Laz04a}.
It is natural to ask when cohomology groups have submaximal growth.
The following result says that ample Cartier divisors $L$ are characterized by
having submaximal growth of higher cohomology groups for small perturbations of
$L$.
\begin{alphthm}\label{thm:dfkl41}
  Let $X$ be a projective variety of dimension $n > 0$ over a field $k$.
  Let $L$ be an $\RR$-Cartier divisor on $X$, and consider the following
  property:
  \begin{enumerate}[label=$(\star)$,ref=\star]
    \item\label{thm:dfkl41cond}
      There exists a very ample Cartier divisor $A$ on $X$ and a
      real number $\varepsilon > 0$ such that
      \[
        \widehat{h}^i(X,L-tA) \coloneqq \limsup_{m \to \infty}
        \frac{h^i\bigl(X,\cO_X\bigl(\lceil m(L-tA) \rceil\bigr)\bigr)}{m^n/n!} =
        0
      \]
      for all $i > 0$ and for all $t \in [0,\varepsilon)$.
  \end{enumerate}
  Then, $L$ is ample if and only if $L$ satisfies $(\ref{thm:dfkl41cond})$ for
  some pair $(A,\varepsilon)$.
\end{alphthm}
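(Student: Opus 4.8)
To prove Theorem~\ref{thm:dfkl41} the plan is to treat the two implications separately. The ``only if'' direction is soft, resting only on Serre vanishing and the formal properties of the functions $\widehat{h}^i$ (homogeneity and continuity on the big cone) recorded earlier. For the ``if'' direction --- the substantial one --- the plan is to reduce the ground field to the $F$-finite case by means of the gamma construction (Theorem~\ref{thm:gammaconstintro}) and then to carry out, over that $F$-finite field, an induction on $\dim X$ in the style of de Fernex--K\"uronya--Lazarsfeld.

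\emph{The ``only if'' direction.} Suppose $L$ is ample and fix a very ample Cartier divisor $A$. The ample cone is open in $N^1(X)_{\RR}$, so $L-tA$ is ample for all $t$ in some interval $[0,\varepsilon)$; it therefore suffices to show $\widehat{h}^i(X,D)=0$ for $i>0$ whenever $D$ is an ample $\RR$-Cartier divisor. By homogeneity and continuity of $\widehat{h}^i$ and density of ample $\QQ$-Cartier classes in the ample cone, we may take $D=\tfrac1q A'$ with $A'$ an ample Cartier divisor and $q>0$; writing $m=qm'+r$ with $0\le r<q$ gives $\lceil mD\rceil=m'A'+\lceil(r/q)A'\rceil$, so $\mathcal{O}_X(\lceil mD\rceil)$ is, for each residue $r$, a fixed coherent sheaf twisted by $m'A'$, and Serre vanishing forces $h^i(X,\mathcal{O}_X(\lceil mD\rceil))=0$ for $i>0$ and $m\gg 0$. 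Hence $\widehat{h}^i(X,D)=0$ and $L$ satisfies $(\ref{thm:dfkl41cond})$.

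\emph{Reduction of the ground field.} Assume now $L$ satisfies $(\ref{thm:dfkl41cond})$ for a pair $(A,\varepsilon)$ with $A$ very ample. If $\Char k=p>0$, apply Theorem~\ref{thm:gammaconstintro} (or Theorem~\ref{thm:gammaconst}) to obtain a purely inseparable extension $k\subseteq k^\Gamma$ with $k^\Gamma$ $F$-finite and $\pi^\Gamma\colon X^\Gamma\coloneqq X\times_k k^\Gamma\to X$ a homeomorphism. Since $k\to k^\Gamma$ is flat, cohomology dimensions $h^i$ of line bundles are preserved by $(\pi^\Gamma)^*$; since the round-up operation is compatible with $(\pi^\Gamma)^*$, this gives $\widehat{h}^i(X^\Gamma,(\pi^\Gamma)^*(L-tA))=\widehat{h}^i(X,L-tA)$; and ampleness is insensitive to ground-field extension. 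Thus $(X^\Gamma,(\pi^\Gamma)^*L)$ again satisfies $(\ref{thm:dfkl41cond})$, $X^\Gamma$ is again a positive-dimensional projective variety over $k^\Gamma$ (``reduced'' and ``irreducible'' being preserved), and it suffices to prove $(\pi^\Gamma)^*L$ ample. In characteristic zero there is nothing to do, every field being $F$-finite. After a harmless further field extension --- affecting none of $h^i$, $\widehat{h}^i$, ampleness --- we may also assume $k$ is infinite, so Bertini theorems apply. We now prove, by induction on $n=\dim X\ge 1$, that $(\ref{thm:dfkl41cond})$ implies $L$ is ample.

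\emph{The induction and the main obstacle.} If $n=1$, then $\widehat{h}^1(X,D)=\max(0,-\deg D)$ on the curve $X$, so $(\ref{thm:dfkl41cond})$ gives $\deg(L-tA)\ge 0$ for $t\in[0,\varepsilon)$, i.e. $\deg L>0$, which is ampleness. Let $n\ge 2$. First, $(\ref{thm:dfkl41cond})$ together with the asymptotic Riemann--Roch estimate $\chi(X,\mathcal{O}_X(\lceil m(L-tA)\rceil))=\tfrac{m^n}{n!}\bigl((L-tA)^n\bigr)+O(m^{n-1})$ shows that $L-tA$ is big with $\operatorname{vol}(L-tA)=\bigl((L-tA)^n\bigr)$ for every $t\in[0,\varepsilon)$: were $L-tA$ not big, then $\operatorname{vol}(L-tA)=0$, so every $h^j$ of $\mathcal{O}_X(\lceil m(L-tA)\rceil)$ would be $o(m^n)$, and the polynomial $t\mapsto\bigl((L-tA)^n\bigr)$ --- whose leading coefficient is $(-1)^n(A^n)\ne 0$ --- would vanish on $[0,\varepsilon)$, which is absurd; comparing the two expansions again yields the stated equality, whence $(L^n)=\operatorname{vol}(L)>0$. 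Next, restricting to a general member $H\in|A|$ --- a projective variety of dimension $n-1$ by Bertini --- the restriction exact sequences for the divisors $\lceil m(L-tA)\rceil$, together with $H\sim A$ and the elementary behaviour of $\widehat{h}^i$ under restriction, transport $(\ref{thm:dfkl41cond})$ to $(H,L|_H)$, so $L|_H$ is ample by the inductive hypothesis. The crux is then to conclude that $L$ is nef: following de Fernex--K\"uronya--Lazarsfeld, one uses the equalities $\operatorname{vol}(L-tA)=\bigl((L-tA)^n\bigr)$ on $[0,\varepsilon)$ --- via Fujita approximation and the differentiability of the volume function, or equivalently Nakayama's $\sigma$-decomposition --- together with the ampleness of $L|_H$ for general $H$ to exclude curves on which $L$ is negative. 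Once $L$ is known to be nef, the same argument applied to $L-t_0A$ (which also satisfies $(\ref{thm:dfkl41cond})$) shows $L-t_0A$ is nef for every $t_0\in[0,\varepsilon)$; since the nef cone is closed, $L-\tfrac{\varepsilon}{2}A$ is nef, and so $L=(L-\tfrac{\varepsilon}{2}A)+\tfrac{\varepsilon}{2}A$ is the sum of a nef and an ample class, hence ample. The genuinely difficult point is this passage to nefness: it is precisely where characteristic-zero vanishing theorems are used in the original proof, and where in characteristic $p$ one must invoke Fujita approximation, the theory of numerical positivity functions, and the good behaviour of $\widehat{h}^i$ under restriction and small perturbation --- tools available only after the reduction to an $F$-finite ground field furnished by Theorem~\ref{thm:gammaconstintro}. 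The auxiliary facts used above --- compatibility of the round-up, of $\widehat{h}^i$ and of $\operatorname{vol}$ with field extensions and with restriction to general hyperplane sections, and the asymptotic Riemann--Roch estimate on a possibly singular $X$ --- are supplied by the general properties of asymptotic cohomological functions established earlier.
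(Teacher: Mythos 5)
Your treatment of the soft parts matches the paper: the $\Rightarrow$ direction via Serre vanishing plus homogeneity/continuity, the reduction from ampleness to nefness by writing $L=(L-\delta A)+\delta A$, the base change to an $F$-finite field via the gamma construction, the induction on dimension with the curve case handled by asymptotic Serre duality, and the restriction of $(\ref{thm:dfkl41cond})$ to members of $\lvert A\rvert$ to feed the inductive hypothesis. But the step you yourself call ``the crux'' --- deducing nefness --- is exactly the content of the theorem, and you have not given an argument for it. Appealing to ``the equalities $\operatorname{vol}(L-tA)=\bigl((L-tA)^n\bigr)$ \dots\ via Fujita approximation and the differentiability of the volume function, or equivalently Nakayama's $\sigma$-decomposition'' is not how de Fernex--K\"uronya--Lazarsfeld argue, and it is not a proof: the equality $\operatorname{vol}(L)=(L^n)$ for a big divisor does not by itself force nefness, and the differentiability/$\sigma$-decomposition machinery is precisely the kind of characteristic-zero, algebraically-closed-field technology whose absence this paper is designed to circumvent, so invoking it here is circular. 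What the paper actually does is argue by contradiction: assuming $L\cdot Z<0$ for an integral curve $Z$, it passes to a regular alteration and its Stein factorization $X'\to\widetilde X\to X$ so that the base-locus lemma (Proposition \ref{prop:dfkl31}) applies to $\nu^*L$ and the preimage $\widetilde Z$; that lemma --- proved via asymptotic test ideals (resp.\ multiplier ideals), uniform global generation, and subadditivity --- yields $\fb(\lvert m\,\nu^*L\rvert)\subseteq\fa^{m-c}$, and then a Koszul-type resolution built from $r=\delta m$ very general divisors in $\lvert\nu^*A\rvert$, together with a jet-separation codimension estimate, forces $h^1\bigl(\widetilde X,\cO_{\widetilde X}(m(\nu^*L-\delta\,\nu^*A))\bigr)\gtrsim\delta m^n$, contradicting $(\ref{thm:dfkl41cond})$. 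None of this appears in your outline.

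Two further points. First, your field reduction only arranges that $k$ be infinite, but the argument needs \emph{uncountably} many very general members of $\lvert\nu^*A\rvert$ (both for Lemma \ref{lem:dfkl43} and for the counting argument), so one must first pass to an uncountable field --- e.g.\ by adjoining uncountably many indeterminates --- \emph{before} applying the gamma construction; this is Lemma \ref{lem:dfklfieldred}. Second, the bigness/volume discussion, while essentially correct, plays no role in the paper's proof and can be discarded; the real missing ingredient is Proposition \ref{prop:dfkl31} and the mechanism for converting the base-locus containment into growth of $h^1$.
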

\par Here, the functions $\widehat{h}^i(X,-)$ are the \textsl{asymptotic higher
cohomological functions} introduced by K\"uronya \cite{Kur06}.
Theorem \ref{thm:dfkl41} was first proved by de~Fernex, K\"uronya, and
Lazarsfeld over the complex numbers \cite[Thm.\ 4.1]{dFKL07}.
In positive characteristic, an interesting aspect of our proof is that it
requires the gamma construction (Theorem \ref{thm:gammaconstintro}) to reduce to
the case when $k$ is $F$-finite.
The main outline of the proof follows that in \cite{dFKL07}, although overcoming
the three problems described above requires care.
\par We note that our motivation for Theorem \ref{thm:dfkl41} comes from
studying Seshadri constants, where Theorem \ref{thm:dfkl41} can be used to show
that Seshadri constants and moving Seshadri constants of $\QQ$-Cartier divisors
can be described via jet separation on projective varieties over arbitrary
fields, without smoothness assumptions \cite[Prop.\ 7.2.10]{Mur19}.
As a result, one can extend \cite[Thm.\ A]{Mur18} to varieties with
either singularities of dense $F$-injective type in characteristic zero, or
varieties with $F$-injective singularities in characteristic $p > 0$, without
any assumptions on the ground field \cite[Thm.\ 7.3.1]{Mur19}.
In \cite[Thm.\ D]{Mur19}, we use a similar argument to prove a new, local
version of the Angehrn--Siu theorem \cite[Thm.\ 0.1]{AS95} in characteristic
zero without the use of Kodaira-type vanishing theorems.
\medskip
\par Our second application is a special case of a conjecture of Boucksom,
Broustet, and Pacienza \cite{BBP13}.
If $D$ is a pseudoeffective $\RR$-Cartier divisor on a projective variety $X$,
then both the \textsl{restricted base locus} $\Bminus(D)$ of $D$, which is a
lower approximation of the stable base locus $\SB(D)$ of $D$, and the
\textsl{non-nef locus} $\NNef(D)$ of $D$, which is defined in terms of
divisorial valuations, are empty if and only if $D$ is nef.
See \S\ref{sect:bminus} for definitions of both invariants.
Boucksom, Broustet, and Pacienza conjectured that these two invariants of $D$
are equal for all pseudoeffective $\RR$-Cartier divisors on normal projective
varieties \cite[Conj.\ 2.7]{BBP13}.
We extend the known cases of their conjecture to projective varieties over
arbitrary fields.
\begin{alphthm}\label{thm:bbpconj}
  Let $X$ be a normal projective variety over a field $k$, and let $D$ be a
  pseudoeffective $\RR$-Cartier divisor on $X$.
  If $\Char k = 0$ and the non-klt locus of $X$ is at most zero-dimensional, or
  if $\Char k = p > 0$ and the non-strongly $F$-regular locus of $X$ is at most
  zero-dimensional, then
  \[
    \Bminus(D) = \NNef(D).
  \]
\end{alphthm}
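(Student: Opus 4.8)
The plan is first to reduce to a ground field over which Frobenius or resolution techniques are available, and then to transplant the multiplier-ideal argument behind the known cases of \cite[Conj.\ 2.7]{BBP13}. When $\Char k = 0$ the field is perfect, and base change along $k \hookrightarrow \overline{k}$ reduces us to $k$ algebraically closed --- $X_{\overline{k}}$ is a finite disjoint union of normal projective varieties, and cohomology of line bundles, ample cones, pseudoeffectivity, divisorial valuations and discrepancies all commute with this base change, so $\Bminus$, $\NNef$, normality and the non-klt locus do as well --- after which a standard Lefschetz-principle argument reduces us to $k = \CC$, where the statement is among the known cases of the Boucksom--Broustet--Pacienza conjecture. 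When $\Char k = p > 0$, I would instead apply Theorem~\ref{thm:gammaconstintro} with $\cQ \supseteq \{\text{normal},\ \text{strongly }F\text{-regular}\}$ to obtain a purely inseparable, $F$-finite extension $k \subseteq k^\Gamma$ for which $\pi^\Gamma\colon X^\Gamma \coloneqq X \times_k k^\Gamma \to X$ is a homeomorphism identifying normal and strongly $F$-regular loci, so that $X^\Gamma$ is again a normal projective variety whose non-strongly-$F$-regular locus is at most zero-dimensional. Since $k \subseteq k^\Gamma$ is a flat field extension and $X$ is proper over $k$, flat base change gives $H^0\bigl(X^\Gamma, \cO_{X^\Gamma}(m\,{\pi^\Gamma}^{*}D')\bigr) \cong H^0\bigl(X, \cO_X(mD')\bigr) \otimes_k k^\Gamma$ for every Cartier divisor $D'$; hence $\pi^\Gamma$ preserves and reflects ampleness and pseudoeffectivity, carries stable base loci to stable base loci, and identifies $\sigma_v(D)$ with $\sigma_{v^\Gamma}({\pi^\Gamma}^{*}D)$ for corresponding divisorial valuations, so $\Bminus$ and $\NNef$ commute with $\pi^\Gamma$. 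It then suffices to prove the theorem after replacing $(k,X,D)$ by $(k^\Gamma, X^\Gamma, {\pi^\Gamma}^{*}D)$, and I assume henceforth that $\Char k = p > 0$ with $k$ $F$-finite.

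The inclusion $\NNef(D) \subseteq \Bminus(D)$ holds for every pseudoeffective $\RR$-Cartier divisor on a normal projective variety, and I would establish it directly. Fixing an ample Cartier divisor $A_0$, one has $\Bminus(D) = \bigcup_{\varepsilon > 0}\SB(D + \varepsilon A_0)$, while $\sigma_v(D)$ is non-increasing under adding an ample class and equals $\lim_{\varepsilon \downarrow 0}\sigma_v(D + \varepsilon A_0)$. If $x \notin \Bminus(D)$, then for every small $\varepsilon > 0$ some effective $\RR$-divisor $\RR$-linearly equivalent to $D + \varepsilon A_0$ has support missing $x$, hence missing the center of every divisorial valuation $v$ with $x \in c_X(v)$; thus $\sigma_v(D + \varepsilon A_0) = 0$ for all such $v$, so $\sigma_v(D) = 0$ and $x \notin \NNef(D)$. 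For the reverse inclusion, the same exhaustion formulas --- together with $\NNef(D) = \bigcup_{\varepsilon > 0}\NNef(D + \varepsilon A_0)$ --- let me reduce to the case where $D$ is big.

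It remains to prove $\Bminus(D) \subseteq \NNef(D)$ for $D$ big and $k$ $F$-finite of characteristic $p$, where I would run the Nakayama--Boucksom--Broustet--Pacienza argument with test ideals replacing multiplier ideals. Fix $x \notin \NNef(D)$; the goal is an ample divisor $A$ with $x \notin \SB(D + A)$. If $x$ is a strongly $F$-regular point of $X$, the hypothesis that $\sigma_v(D) = 0$ for every divisorial valuation $v$ with $x \in c_X(v)$ forces the relevant asymptotic test ideal of $X$ to be trivial at $x$. One then clears the base point by a Frobenius-theoretic substitute for Nadel vanishing: for a sufficiently ample Cartier divisor $A$ and $m \gg 0$, the Grothendieck trace of a large power of Frobenius --- available precisely because $k$ is $F$-finite --- combined with Serre vanishing applied after enough Frobenius twists, makes $\cO_X(\lceil mD \rceil + A) \otimes \tau(X, \|mD\|)$ globally generated near $x$; since this sheaf coincides with $\cO_X(\lceil mD \rceil + A)$ in a neighborhood of $x$, one obtains $x \notin \SB(D + A')$ for any fixed ample $A'$ slightly exceeding $A$. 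Finally, if $x$ is one of the finitely many non-strongly-$F$-regular points and $x \in \Bminus(D)$, a separate local argument is needed: I would pass to a normalized blow-up $f \colon Y \to X$ at $x$ with exceptional prime divisor $E$ and reduce to checking that $\sigma_E(f^{*}D) > 0$, so that $x = f(E) \in \NNef(D)$.

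The main obstacle will be this last step in positive characteristic. With no Nadel vanishing available, one must set up the theory of asymptotic test ideals $\tau(X, \|D\|)$ and the accompanying effective global-generation statement for test ideals on a projective variety over an $F$-finite but possibly \emph{imperfect} ground field --- which is exactly why Theorem~\ref{thm:gammaconstintro} is invoked --- and one must check carefully that the gamma construction transports $\Bminus$, $\NNef$ and the functions $\sigma_v$ without distortion under the (generally infinite, purely inseparable) extension $k \subseteq k^\Gamma$. A secondary difficulty, present already over $\CC$, is the at-most-zero-dimensional non-klt (resp.\ non-strongly-$F$-regular) locus, at which the ambient test or multiplier ideal of $X$ degenerates and the equality $\Bminus(D) = \NNef(D)$ must be forced by a direct argument.
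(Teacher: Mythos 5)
Your overall architecture for the positive-characteristic case (reduce to an $F$-finite field via the gamma construction, then run a test-ideal analogue of the Nakayama/Cacciola--Di Biagio argument) is the same as the paper's, and your proof of $\NNef(D)\subseteq\Bminus(D)$ matches it. But there is a genuine gap at the one place where the hypothesis of the theorem actually enters: the points of the zero-dimensional non-klt (resp.\ non-strongly-$F$-regular) locus. Your proposal for such a point $x\in\Bminus(D)$ is to ``pass to a normalized blow-up at $x$ with exceptional divisor $E$ and reduce to checking that the numerical vanishing order of $D$ along $E$ is positive'' --- but that is a restatement of what must be proved, not an argument, and you yourself flag it as an unresolved ``secondary difficulty.'' It is in fact the main difficulty. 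The paper resolves it not by any local analysis at $x$ but by a topological observation: writing $\Bminus(D)=\bigcup_n\SB(D+A_n)$, each stable base locus $\SB(D+A_n)$ contains no isolated points \cite[Prop.\ 1.1]{ELMNP09}, so the irreducible component $W$ of $\SB(D+A_n)$ through $x$ has positive dimension; its generic point then lies outside the finite bad locus, hence in $\NNef(D)$ by the ``away from the bad locus'' statement (Theorem \ref{thm:bbpconjwloci}), and since $\NNef(D)$ is a union of \emph{closed} centers $c_X(v)$, all of $W$ --- in particular $x$ --- lies in $\NNef(D)$. Without this (or some substitute), your proof does not close.

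A secondary issue is your reduction to the $F$-finite case: you assert that a single application of Theorem \ref{thm:gammaconstintro} makes both $\Bminus$ and $\NNef$ commute with $\pi^\Gamma$. For $\Bminus$ this is flat base change (Lemma \ref{lem:bminusfieldext}), but for $\NNef$ the quantity $v_{\num}(D)=\inf_{E\equiv_\RR D}v(E)$ can a priori drop under the (infinite, purely inseparable) extension $k\subseteq k^\Gamma$, since new effective divisors may appear; commutation in both directions is not justified. The paper sidesteps this by applying the gamma construction \emph{per valuation}: given $x\in\Bminus(D)$ it fixes a divisorial valuation $v=\ord_E$ with $E$ on a normal model $X'$, applies Theorem \ref{thm:gammaconst} simultaneously to $X$, $X'$, and $E$, deduces $v^\Gamma(\lVert(\pi^\Gamma)^*D\rVert)>0$ from \cite{Sat18} upstairs, and then only needs the easy inequality $v(\lVert D\rVert)\ge v^\Gamma(\lVert(\pi^\Gamma)^*D\rVert)$ obtained by pulling back sections. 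If you want to keep your global reduction, you need to prove the missing direction of the comparison for $\NNef$ (only one inclusion is actually required, but you should identify which and prove it). Finally, your characteristic-zero route through $\overline{k}$ and the Lefschetz principle differs from the paper, which instead adapts the Cacciola--Di Biagio proof directly over $k$ using uniform global generation of asymptotic multiplier ideals; your route is plausible but again requires the same care about how $\NNef$ behaves under ground field extension.
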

\par This extends theorems of Nakayama \cite[Lem.\ V.1.9(1)]{Nak04} (in the
smooth case) and Cacciola--Di Biagio \cite[Cor.\ 4.9]{CDB13} over the complex
numbers, and of Musta\c{t}\u{a} \cite[Thm.\ 7.2]{Mus13} (in the regular case)
and Sato \cite[Cor.\ 4.8]{Sat18} over $F$-finite fields of characteristic $p >
0$.
\subsection*{Outline}
This paper is structured as follows:
In \S\ref{sect:defs}, we review some basic material, including the necessary
background on $F$-finiteness, $F$-singularities, and test ideals.
In \S\ref{sect:gammaconst}, we prove Theorem \ref{thm:gammaconst}, which is a
stronger version of Theorem \ref{thm:gammaconstintro}.
We use this stronger version in some applications to the minimal model program
over imperfect fields in \S\ref{sect:tanakaapps}.
The last two sections are devoted to our applications of the gamma construction.
In \S\ref{sect:dfkl}, we prove Theorem \ref{thm:dfkl41} after reviewing some
background on asymptotic cohomological functions.
An important ingredient is a lemma on base loci (Proposition \ref{prop:dfkl31})
analogous to \cite[Prop.\ 3.1]{dFKL07}.
In \S\ref{sect:nakayamabminus}, we prove Theorem \ref{thm:bbpconj} after giving
some background on restricted base loci and non-nef loci.
Finally, in Appendix \ref{app:finj}, we prove some results on $F$-injective
rings for which we could not find a suitable reference, and in Appendix
\ref{app:nonffinfreg}, we describe different notions of strong $F$-regularity
for non-$F$-finite rings.
\subsection*{Notation}
All rings will be commutative with identity.
If $R$ is a ring, then $R^\circ$ denotes the complement of the union of the
minimal primes of $R$.
A \textsl{variety} is a reduced and irreducible scheme that is separated and of
finite type over a field.
A \textsl{complete scheme} is a scheme that is proper over a field.
Intersection products are defined using Euler characteristics;
see \cite[App.\ B]{Kle05}.
\par Let $\kk \in \{\QQ,\RR\}$.
A \textsl{$\kk$-Cartier divisor} (resp.\
\textsl{$\kk$-Weil divisor}) is an element of $\Div_\kk(X) \coloneqq \Div(X)
\otimes_\ZZ \kk$ (resp.\ $\WDiv_\kk(X) \coloneqq \WDiv(X)
\otimes_\ZZ \kk$).
We denote $\kk$-linear equivalence (resp.\ $\kk$-numerical
equivalence) by $\sim_\kk$ (resp.\ $\equiv_\kk$).
We then set $N^1_\kk(X) \coloneqq \Div_\kk(X)/\mathord{\equiv_\kk}$, which is a
finite-dimensional $\kk$-vector space if $X$ is a complete scheme
\cite[Prop.\ 2.3]{Cut15}.
We fix compatible norms $\lVert \cdot \rVert$ on $N^1_\kk(X)$ for $\kk \in
\{\QQ,\RR\}$.
\par If $X$ is a scheme of prime characteristic $p > 0$, then we denote by
$F\colon X \to X$ the \textsl{(absolute) Frobenius morphism,} which is given by
the identity map on points, and the $p$-power map
\[
  \begin{tikzcd}[row sep=0,column sep=1.6em]
    \cO_X(U) \rar & F_*\cO_X(U)\\
    f \rar[mapsto] & f^p
  \end{tikzcd}
\]
on structure sheaves, where $U \subseteq X$ is an open subset.
If $R$ is a ring of prime characteristic $p > 0$, we denote the corresponding ring
homomorphism by $F\colon R \to F_*R$.
For every integer $e \ge 0$, the $e$th iterate of the Frobenius morphisms for
schemes or rings is denoted by $F^e$.
\subsection*{Acknowledgments}
I am grateful to my advisor Mircea Musta\c{t}\u{a} for his constant support and
for several illuminating conversations, and to Mitsuyasu Hashimoto, Melvin
Hochster, Emanuel Reinecke, Karen E. Smith, Daniel Smolkin, Matthew Stevenson,
Shunsuke Takagi, and Farrah Yhee for helpful discussions.
I am particularly grateful to Harold Blum for pointing out applications of
\cite{dFKL07}, to Rankeya Datta for clarifications on strong $F$-regularity for
non-$F$-finite rings, to Alex K\"uronya for talking to me about his results
in \cite{Kur06} and \cite{dFKL07}, and to Linquan Ma, Thomas Polstra, Karl
Schwede, and Kevin Tucker for sharing a preliminary draft of \cite{MPST19} with
me.
I first proved Theorem \ref{thm:dfkl41} in order to prove some results
in joint work with Mihai Fulger, and I would especially like to thank him for
allowing me to write this standalone paper.
Finally, I am indebted to the anonymous referee for useful suggestions that
improved the quality of this paper.

\section{Definitions and preliminaries}\label{sect:defs}
\subsection{Morphisms essentially of finite type}
Recall that a ring homomorphism $A \to B$ is \textsl{essentially of finite type}
if $B$ is isomorphic (as an $A$-algebra) to a localization of an $A$-algebra of
finite type.
The corresponding scheme-theoretic notion is the following:
\begin{citeddef}[{\cite[Def.\ 2.1$(a)$]{Nay09}}]
  Let $f\colon X \to Y$ be a morphism of schemes.
  We say that $f$ is \textsl{locally essentially of finite type} if there is an
  affine open covering $Y = \bigcup_i \Spec A_i$
  such that for every $i$, there is an affine open covering
  \[
    f^{-1}(\Spec A_i) = \bigcup_j \Spec B_{ij}
  \]
  for which the corresponding ring homomorphisms $A_i \to B_{ij}$ are
  essentially of finite type.
  We say that $f$ is \textsl{essentially of finite type} if it is
  locally essentially of finite type and quasi-compact.
\end{citeddef}
\par The class of morphisms (locally) essentially of finite type is closed
under composition and base change \cite[(2.2)]{Nay09}.
\subsection{Base loci}
In this subsection, we define the base ideal of a Cartier divisor and related
objects.
\begin{definition}[see {\cite[Def.\ 1.1.8]{Laz04a}}]\label{def:baseideal}
  Let $X$ be a complete scheme over a field $k$, and let $D$ be a Cartier
  divisor.
  The \textsl{complete linear series} associated to $D$ is the projective space
  $\lvert D \rvert \coloneqq \PP(H^0(X,\cO_X(D))^\vee)$
  of one-dimensional subspaces of $H^0(X,\cO_X(D))$.
  The \textsl{base ideal} of $D$ is
  \begin{equation}\label{eq:baseidealeval}
    \fb\bigl(\lvert D \rvert\bigr) \coloneqq \im\bigl(H^0\bigl(X,\cO_X(D)\bigr)
    \otimes_k \cO_X(-D) \xrightarrow{\eval} \cO_X \bigr).
  \end{equation}
  The \textsl{base scheme} $\Bs(\lvert D \rvert)$ of $D$ is
  the closed subscheme of $X$ defined by $\fb(\lvert D \rvert)$, and the
  \textsl{base locus} of $D$ is the underlying closed subset
  $\Bs(\lvert D \rvert)_\red$.
\end{definition}
\par We will need the following description for how base ideals transform under
birational morphisms.
\begin{lemma}\label{lem:baselocusnormal}
  Let $f\colon X' \to X$ be a birational morphism between complete varieties,
  where $X$ is normal.
  Then, for every Cartier divisor $D$ on $X$, we have
  $f^{-1}\fb(\lvert D \rvert)\cdot\cO_{X'} = \fb(\lvert f^*D \rvert)$.
\end{lemma}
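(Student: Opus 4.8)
The plan is to reduce the statement to a computation with the evaluation map defining the base ideal, using the fact that pushforward of global sections along a birational morphism between complete varieties (with $X$ normal) is an isomorphism. First I would recall that, since $f\colon X' \to X$ is birational and $X$ is normal, we have $f_*\cO_{X'} = \cO_X$; this is the key input, and it holds because $X$ is normal and $f$ is proper and birational (so $f$ has connected fibers and $\cO_X \to f_*\cO_{X'}$ is an isomorphism by Zariski's main theorem / the normality of $X$). Consequently, for every Cartier divisor $D$ on $X$, the projection formula gives $f_*\cO_{X'}(f^*D) = f_*(f^*\cO_X(D)) = \cO_X(D) \otimes f_*\cO_{X'} = \cO_X(D)$, and in particular $H^0(X',\cO_{X'}(f^*D)) = H^0(X,\cO_X(D))$, with the identification induced by pullback along $f$.

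Next I would set up the two evaluation maps side by side. On $X$ we have
\[
  H^0(X,\cO_X(D)) \otimes_k \cO_X(-D) \xrightarrow{\ \eval\ } \cO_X,
\]
whose image is $\fb(\lvert D \rvert)$ by Definition \ref{def:baseideal}; on $X'$ we have the analogous map
\[
  H^0(X',\cO_{X'}(f^*D)) \otimes_k \cO_{X'}(-f^*D) \xrightarrow{\ \eval\ } \cO_{X'},
\]
whose image is $\fb(\lvert f^*D \rvert)$. Pulling the first evaluation map back along $f$ and using the identification $H^0(X,\cO_X(D)) = H^0(X',\cO_{X'}(f^*D))$ from the previous paragraph, I would check that it becomes exactly the second evaluation map: the point is that $\eval$ is $\cO$-linear and compatible with pullback, and $f^*$ commutes with tensor products and takes $\cO_X(-D)$ to $\cO_{X'}(-f^*D)$. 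Since taking the image of a map of sheaves commutes with the (right-exact, and here even flat on the relevant open locus) pullback functor $f^*$ — more precisely, $f^*(\im(\phi)) = \im(f^*\phi)$ because $f^*$ is right exact — applying $f^*$ to the inclusion $\fb(\lvert D \rvert) \hookrightarrow \cO_X$ and taking the image inside $\cO_{X'}$ yields $f^{-1}\fb(\lvert D \rvert)\cdot\cO_{X'} = \im(f^*\eval) = \fb(\lvert f^*D \rvert)$, which is precisely the claimed equality (recall $f^{-1}\fb\cdot\cO_{X'}$ is by definition the image of $f^*\fb \to f^*\cO_X = \cO_{X'}$).

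The step I expect to be the main obstacle is the clean identification $f_*\cO_{X'} = \cO_X$ and, relatedly, being careful that the varieties are genuinely complete and that $D$ is Cartier so that $\cO_X(D)$ is a line bundle and the projection formula applies without qualification; over an arbitrary field one should make sure $f$ proper birational with $X$ normal still forces $f_*\cO_{X'} = \cO_X$, which it does since normality is preserved by the relevant base extensions and the argument is geometric rather than depending on the ground field. A secondary point requiring a line of justification is that $f^*$ commutes with the formation of the image sheaf; this is immediate from right-exactness of $f^*$, but it is worth stating explicitly since it is exactly what converts the sheaf-level equality of evaluation maps into the desired equality of (pulled-back) ideal sheaves.
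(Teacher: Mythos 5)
Your proposal is correct and follows the same route as the paper's proof: use $f_*\cO_{X'} \simeq \cO_X$ (from normality of $X$ and Zariski's main theorem) together with the projection formula to identify $H^0(X,\cO_X(D))$ with $H^0(X',\cO_{X'}(f^*D))$, then pull back the evaluation map defining the base ideal. The extra detail you supply about right-exactness of $f^*$ commuting with the formation of the image is exactly the step the paper leaves implicit in the phrase ``the lemma then follows by pulling back the evaluation map.''
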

\begin{proof}
  Since $X$ is normal, we have $f_*\cO_{X'} \simeq \cO_X$ \cite[Proof of Cor.\
  III.11.4]{Har77}.
  By the projection formula, we then have $H^0(X,\cO_X(D)) \simeq
  H^0(X',\cO_{X'}(f^*D))$, and the lemma then follows by pulling back the
  evaluation map \eqref{eq:baseidealeval}.
\end{proof}
Next, we define a stable version of the base locus.
\begin{definition}[see {\cite[Def.\ 2.1.20]{Laz04a}}]\label{def:stablebaselocus}
  Let $X$ be a complete scheme over a field, and let $D$ be a Cartier
  divisor on $X$.
  The \textsl{stable base locus} of $D$ is the closed subset
  \begin{equation}\label{eq:defstablebaselocus}
    \SB(D) \coloneqq \bigcap_m \Bs\bigl( \lvert mD \rvert \bigr)_\red
  \end{equation}
  of $X$, where the intersection runs over every integer $m > 0$.
  The noetherian property implies $\SB(D) = \SB(nD)$ for every
  integer $n > 0$ \cite[Ex.\ 2.1.23]{Laz04a}, hence the formula
  \eqref{eq:defstablebaselocus} can be used for $\QQ$-Cartier divisors
  $D$ by taking the intersection over every integer $m > 0$ such that
  $mD$ is integral.
\end{definition}
The stable base locus is not a numerical invariant of $D$ \cite[Ex.\
10.3.3]{Laz04b}.
In \S\ref{sect:bminus}, we will define the \textsl{restricted base locus}
$\Bminus(D)$, which is a numerically invariant approximation of $\SB(D)$.
\subsection{\textit{F}-finite schemes}
As mentioned in \S\ref{sect:intro}, in positive characteristic, one often needs
to restrict or reduce to the case when the Frobenius morphism is finite.
We isolate this class of schemes.
\begin{definition}
  Let $X$ be a scheme of prime characteristic $p > 0$.
  We say that $X$ is \textsl{$F$-finite} if the (absolute) Frobenius morphism
  $F\colon X \to X$ is finite.
  We say that a ring $R$ of prime characteristic $p > 0$ is \textsl{$F$-finite} if
  $\Spec R$ is $F$-finite, or equivalently if $F\colon R \to F_*R$ is
  module-finite.
\end{definition}
\par Note that a field $k$ is $F$-finite if and only if $[k:k^p] < \infty$.
$F$-finite schemes are ubiquitous in geometric contexts because of the
following:
\begin{example}[see {\cite[p.\ 999]{Kun76}}]\label{ex:eftoverffin}
  If $X$ is a scheme that is locally essentially of finite type over an
  $F$-finite scheme of prime characteristic $p > 0$, then $X$ is $F$-finite.
  In particular, schemes essentially of finite type over perfect or $F$-finite
  fields are $F$-finite.
\end{example}
\par If a scheme $X$ of prime characteristic $p > 0$ is $F$-finite, then Grothendieck
duality can be applied to the Frobenius morphism since it is finite
\cite[III.6]{Har66}.
The $F$-finiteness condition implies other desirable conditions as well.
\begin{citedthm}[{\citeleft\citen{Kun76}\citemid Thm.\
  2.5\citepunct\citen{Gab04}\citemid Rem.\ 13.6\citeright}]
  \label{thm:ffiniteaffine}
  Let $R$ be a noetherian $F$-finite ring of prime characteristic $p > 0$.
  Then, $R$ is excellent and is isomorphic to a quotient of a regular
  ring of finite Krull dimension.
  In particular, $R$ admits a dualizing complex $\omega_R^\bullet$.
\end{citedthm}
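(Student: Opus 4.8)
The plan is to treat the two assertions separately, and to obtain the dualizing
complex formally from the statement that $R$ is a quotient of a regular ring.
First I would observe: if one can write $R \cong A/I$ with $A$ regular of finite
Krull dimension, then $A$ is Gorenstein of finite Krull dimension, hence is a
dualizing complex for itself, and $\mathrm{R}\Hom_A(R,A)$ --- whose cohomology is
finite over $R$, since $R$ is a finite $A$-module and $A$ is Noetherian --- is
then a dualizing complex $\omega_R^\bullet$ for $R$; moreover such an $R$, being
a quotient of a Cohen--Macaulay ring, is universally catenary. So the two things
that actually need proof are (A) that $R$ is excellent (Kunz) and (B) that $R$ is
a quotient of a regular ring of finite Krull dimension (Gabber). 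I would begin by
recording that $F$-finiteness is inherited under the operations that occur below
--- localization, passage to quotients, $\fm$-adic completion, and (by
Example~\ref{ex:eftoverffin}) formation of algebras essentially of finite type
--- each of these reducing to the fact that $F_*R$ is a finite $R$-module
together with the compatibility of $F_*(-)$ with the relevant base changes.

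For (A), universal catenarity is subsumed in (B), so I would only need the J-2
and G-ring conditions. For J-2, I would use Kunz's criterion that a Noetherian
ring of characteristic $p$ is regular precisely when its Frobenius is flat:
applied to each $R_\fp$, and using that $(F_*R)_\fp \cong F_*(R_\fp)$ because
$\Spec F$ is the identity on underlying spaces, this identifies the regular locus
of $\Spec R$ with the locus on which the finite $R$-module $F_*R$ is locally
free, which is open; since every finite-type $R$-algebra is again $F$-finite, all
of them have open regular locus, so $R$ is J-2. For the G-ring property I would
check that each localized completion $R_\fm \to \widehat{R_\fm}$ at a maximal
ideal is a regular homomorphism; by the Radu--Andr\'e theorem this reduces to
flatness of the relative Frobenius, and here that map is in fact an isomorphism,
because $F_*R_\fm$ is a finite module, so completion commutes with $F_*$ and
$F_*(R_\fm) \otimes_{R_\fm} \widehat{R_\fm} \cong F_*(\widehat{R_\fm})$.

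The hard part will be (B). I would first show that $F$-finiteness forces $\dim R
< \infty$: a bound on the number of generators of $F_*R$ bounds, after localizing
and passing to residue fields, the degrees $[\kappa(\fp)^{1/p}:\kappa(\fp)]$, and
tracking these along chains of primes caps the Krull dimension. Then I would
construct a regular ring $A$ of finite Krull dimension admitting a surjection
onto $R$; the construction has to exploit the Frobenius --- using that $R$ is
module-finite over $R^p \cong R$, iterated --- precisely in order to keep $\dim
A$ finite, since a naive choice such as a polynomial ring in infinitely many
variables over $\ZZ$ surjects onto $R$ but has infinite dimension and so yields
no dualizing complex. With such an $A$ in hand, $\omega_R^\bullet$ is produced by
the duality argument above (and, if $A$ is taken $F$-finite, excellence of $R$
follows a second time from Kunz applied to $A$). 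The genuinely delicate point, I
expect, is carrying out this Frobenius-based construction while controlling the
dimension of $A$; everything else is either standard descent or a direct
application of Kunz's flatness criterion.
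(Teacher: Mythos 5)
The paper gives no argument for this theorem---it is quoted directly from Kunz and Gabber---so the comparison is really with those sources. Your skeleton is the right one, and two of its three pieces are essentially complete and correct. The deduction of the dualizing complex from assertion (B) is standard and correct: a regular ring $A$ of finite Krull dimension is Gorenstein of finite Krull dimension, hence dualizing for itself, and $\mathrm{R}\Hom_A(R,A)$ is a dualizing complex for the quotient $R$; universal catenarity also comes along for free. Your J-2 argument is exactly the modern proof: by Kunz's flatness criterion the regular locus of an $F$-finite Noetherian ring is the locally free locus of the coherent module $F_*R$, hence open, and every finite-type $R$-algebra is again $F$-finite. The G-ring argument via Radu--Andr\'e is also sound, granting the standard reduction to maximal ideals and the observation that the relative Frobenius $\widehat{R_\fm} \otimes_{R_\fm} F_*R_\fm \to F_*\widehat{R_\fm}$ is an isomorphism because the $\fm$-adic and $F_*\fm$-adic topologies on the finite module $F_*R_\fm$ coincide. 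The claim that $F$-finiteness forces $\dim R < \infty$ is likewise a genuine theorem of Kunz, and your heuristic for it is in the right direction.

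The gap is that assertion (B) itself---the existence of a regular ring of finite Krull dimension surjecting onto $R$---is never established; you correctly identify it as the delicate point and then stop. Since everything else in the statement either is (B) or follows from it by soft arguments (including the universal catenarity leg of excellence, which you route through (B)), this is the entire content of Gabber's contribution and cannot remain a placeholder. In outline, Gabber's construction chooses $a_1,\dots,a_n \in R$ with $R = R^p[a_1,\dots,a_n]$, hence $R = R^{p^e}[a_1,\dots,a_n]$ for all $e$, and assembles the polynomial rings $R[x_1,\dots,x_n]$ into a direct system with Frobenius-twisted transition maps and compatible surjections onto $R$ (sending $x_i \mapsto a_i$ and raising coefficients to $p^e$-th powers); the real work is verifying that the colimit is Noetherian, $F$-finite, of bounded Krull dimension, and regular---the last point again via Kunz's flatness criterion. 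If your proof is meant to be self-contained you must carry this out; otherwise the honest course is the paper's own, namely to cite \cite[Rem.\ 13.6]{Gab04} for (B), at which point your remaining arguments do constitute a correct proof of the rest.
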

\subsection{\textit{F}-singularities}\label{sect:fsings}
We review some classes of singularities defined using the Frobenius morphism.
See \cite{TW18} for a survey, and see Appendix \ref{app:nonffinfreg} for more
material on strong $F$-regularity for non-$F$-finite rings.
Recall that a ring homomorphism $R \to S$ is \textsl{pure}
if the homomorphism $R \otimes_R M \to S \otimes_R M$ is injective for every
$R$-module $M$.
\begin{citeddef}[{\citeleft\citen{Has10}\citemid Def.\ 
  3.3\citepunct\citen{HR76}\citemid p.\ 121\citeright}]\label{def:freg}
  Let $R$ be a noetherian ring of prime characteristic $p > 0$.
  For every $c \in R$ and every integer $e > 0$, we denote by $\lambda^e_c$ the
  composition
  \[
    R \overset{F^e}{\longrightarrow} F_*^eR \xrightarrow{F_*^e(-\cdot c)}
    F_*^eR.
  \]
  If $c \in R$, then following \cite[Def.\ 6.1.1]{DS16} we say that $R$ is
  \textsl{$F$-pure along $c$} if $\lambda^e_c$ is pure for some $e > 0$, and
  that
  \begin{enumerate}[label=$(\alph*)$,ref=\alph*]
    \item $R$ is \textsl{strongly $F$-regular} if every localization $R_\fp$ of
      $R$ is $F$-pure along every $c \in R_\fp^\circ$;
      and\label{def:fsingspurereg}
    \item $R$ is \textsl{$F$-pure} if $R$ is $F$-pure along $1 \in R$.
  \end{enumerate}
  Note that $(\ref{def:fsingspurereg})$ is not the usual definition (Definition
  \ref{def:fregapp}$(\ref{def:fsingssplitreg})$) for strong $F$-regularity,
  which coincides with ours for $F$-finite rings.
  See Appendix \ref{app:nonffinfreg} for a description of the relationship
  between different notions of strong $F$-regularity for non-$F$-finite rings.
\end{citeddef}
\par To define $F$-rationality, we recall that if $R$
is a noetherian ring, then a sequence of elements $x_1,x_2,\ldots,x_n \in R$ is
a \textsl{sequence of parameters} if for every prime ideal $\fp$
containing $(x_1,x_2,\ldots,x_n)$, the images of $x_1,x_2,\ldots,x_n$ in $R_\fp$
are part of a system of parameters in $R_\fp$ \cite[Def.\ 2.1]{HH90}.
\begin{citeddef}[{\cite[Def.\ 1.10]{FW89}}]
  A noetherian ring of prime characteristic $p > 0$ is \textsl{$F$-rational} if every
  ideal generated by a sequence of parameters in $R$ is tightly closed in
  $R$.
\end{citeddef}
\par See \cite[Def.\ 3.1]{HH90} for the definition of tight closure.
Finally, we define $F$-injective singularities.
\begin{citeddef}[{\cite[Def.\ on p.\ 473]{Fed83}}]\label{def:finj}
  A noetherian ring $R$ of prime characteristic $p > 0$ is \textsl{$F$-injective} if,
  for every maximal ideal $\fm \subseteq R$, the $R_\fm$-module homomorphism
  $H^i_\fm(F) \colon H^i_\fm(R_\fm) \to H^i_\fm(F_*R_\fm)$ induced by Frobenius
  is injective for all $i$.
\end{citeddef}
\par We will prove some basic results about $F$-injective rings in
Appendix \ref{app:finj}.
\medskip
\par The relationship between these classes of singularities can be summarized
as follows:
\[
  \begin{tikzcd}[column sep=7.5em]
    \text{regular} \rar[Rightarrow]{\text{\cite[Thm.\ 6.2.1]{DS16}}}
    & \text{strongly $F$-regular} \rar[Rightarrow]{\text{\cite[Cor.\
    3.7]{Has10}}} \dar[Rightarrow,swap]{\text{Def.}} & \text{$F$-rational}
    \dar[Rightarrow]{\text{\cite[Prop.\ A.3$(iii)$]{DM}}}\\
    & \text{$F$-pure} \rar[Rightarrow]{\text{\cite[Lem.\ 3.3]{Fed83}}}
    & \text{$F$-injective}
  \end{tikzcd}
\]
\subsection{Test ideals}
We review the theory of test ideals, which are the positive characteristic
analogues of multiplier ideals.
We recall that following \cite[Def.\ 2.4.14]{Laz04a}, a collection
$\fa_\bullet \coloneqq \{\fa_m\}_{m=1}^\infty$ of coherent ideal sheaves $\fa_m
\subseteq \cO_X$ on a locally noetherian scheme $X$ is a \textsl{graded family
of ideals} if $\fa_m \cdot \fa_n \subseteq \fa_{m+n}$ for all $m,n \ge 1$.
We now fix the following notational conventions for pairs.
\begin{definition}[cf.\ {\cite[Def.\ 2.3]{Sch10}}]
  A \textsl{pair} $(X,\fa_\bullet^\lambda)$ consists of
  \begin{enumerate}[label=$(\roman*)$]
    \item an excellent reduced noetherian scheme $X$; and
    \item a symbol $\fa_\bullet^\lambda$ where $\fa_\bullet$ is a graded family
      of ideals on $X$ such that for every open affine subset $U = \Spec R
      \subseteq X$, we have $\fa_m(U) \cap R^\circ \ne \emptyset$ for some $m >
      0$, and $\lambda$ is a positive real number.
  \end{enumerate}
  We drop $\lambda$ from our notation if $\lambda = 1$.
  If $\fa_\bullet = \{\fa^m\}_{m=1}^\infty$ for some fixed ideal sheaf
  $\fa$, then we denote the pair by $(X,\fa^t)$.
  If $X = \Spec R$ for a ring $R$, then we denote
  the pair by $(R,\fa_\bullet^\lambda)$.
\end{definition}
\par We now define test ideals for $F$-finite schemes of prime
characteristic $p > 0$.
See \cite{ST12} and \cite[\S5]{TW18} for overviews of the theory.
We take Schwede's characterization of test ideals via $F$-compatibility
\cite{Sch10} as our definition.
\begin{citeddef}[{\cite[Def.\ 3.1 and Thm.\ 6.3]{Sch10}}]
  Let $(R,\fa^t)$ be a pair such that $R$ is an $F$-finite ring of prime
  characteristic $p > 0$.
  An ideal $J \subseteq R$ is \textsl{uniformly
  $(\fa^t,F)$-compatible} if for every integer $e > 0$ and
  every $\varphi \in \Hom_R(F^e_*R,R)$, we have
  \[
    \varphi\Bigl(F^e_*\bigl(J \cdot \fa^{\lceil t(p^e-1)\rceil}
    \bigr)\Bigr) \subseteq J.
  \]
  \par Now let $(X,\fa^t)$ be a pair such that $X$ is an $F$-finite scheme of
  prime characteristic $p > 0$.
  The \textsl{test ideal} $\tau(X,\fa^t)$ is defined locally on each affine
  open subset $U = \Spec R\subseteq X$ as the smallest ideal that is
  uniformly $(\fa^t,F)$-compatible and intersects $R^\circ$.
  \par We often drop $X$ from our notation if it is clear from context.
  Similarly, we drop $\fa^t$ or $\fa_\bullet^\lambda$ from our
  notation when working with the scheme itself.
\end{citeddef}
\par The test ideal is well defined by \cite[Prop.\ 3.23$(ii)$]{Sch11}, and exists
by \cite[Thm.\ 3.18]{Sch11}.
Formal properties analogous to those for multiplier ideals hold for test ideals;
see \cite[Prop.\ 5.6]{TW18}.
We can therefore define the following asymptotic version of test ideals:
\begin{citeddef}[{\cite[Prop.-Def.\ 2.16]{Sat18}}]
  Let $(X,\fa_\bullet^\lambda)$ be a pair such that $X$ is $F$-finite
  of prime characteristic $p > 0$.
  The \textsl{asymptotic test ideal} $\tau(X,\fa_\bullet^{\lambda})$ is
  defined to be $\tau(X,\fa_{m}^{\lambda/m})$ for sufficiently large and
  divisible $m$.
\end{citeddef}
\par The subadditivity theorem holds for both test ideals and asymptotic test ideals
on regular complete local rings \cite[Thm.\ 6.10(2)]{HY03}, and therefore also
holds on all regular $F$-finite schemes $X$, since for $F$-finite schemes, the
formation of test ideals is compatible with localization and completion
\cite[Props.\ 3.1 and 3.2]{HT04}.
\medskip
\par The following example will be the most important in our applications.
\begin{example}[see {\cite[Def.\ 2.36]{Sat18}}]
  Let $X$ be a complete reduced scheme over an $F$-finite field of
  characteristic $p >0$.
  If $D$ is a Cartier divisor such that $H^0(X,\cO_X(mD)) \ne 0$ for some
  positive integer $m$, then for every real number $t > 0$, we set
  \begin{align*}
    \tau\bigl(X,t \cdot \lvert D \rvert\bigr) &\coloneqq
    \tau\bigl(X,\fb\bigl(\lvert D \rvert\bigr)^t\bigr).
    \intertext{If $D$ is a $\QQ$-Cartier divisor such that $H^0(X,\cO_X(mD)) \ne
    0$ for some sufficiently divisible $m > 0$, then for every real number
    $\lambda > 0$, we set}
    \tau\bigl(X,\lambda \cdot \lVert D \rVert\bigr) &\coloneqq
    \tau\bigl(X,\fa_\bullet(D)^\lambda\bigr),
  \end{align*}
  where $\fa_m(D) = \fb(\lvert mD \rvert)$ if $mD$ is integral, and $0$
  otherwise.
  See \cite[Ex.\ 2.4.16$(ii)$]{Laz04a}.
\end{example}

\section{The gamma construction of Hochster--Huneke}\label{sect:gammaconst}
Our goal in this section is to prove Theorem \ref{thm:gammaconstintro}, which is
a scheme-theoretic version of the gamma construction of Hochster and Huneke
\cite{HH94}.
Hochster and Huneke first introduced the gamma construction in order to prove
that test elements (in the sense of tight closure) exist for rings that are
essentially of finite type over an excellent local ring of prime characteristic $p >
0$.
To the best of our knowledge, however, their construction has not been
applied explicitly in a geometric context.
\par As mentioned in \S\ref{sect:intro}, Theorem \ref{thm:gammaconstintro}
provides a systematic way to reduce to the case when the ground field $k$ is
$F$-finite.
We will in fact show a more general result (Theorem \ref{thm:gammaconst}), which
allows for the ground field $k$ to be replaced by a complete local ring, and
allows finitely many schemes instead of just one.
After proving Theorems \ref{thm:gammaconstintro} and \ref{thm:gammaconst} in
\S\ref{sect:gammaconstproof}, we prove that the $F$-pure locus is open in
schemes essentially of finite type over excellent local rings (Corollary
\ref{cor:fpurelocusopen}).
We then give some basic applications of Theorem \ref{thm:gammaconst} to the
minimal model program over imperfect fields in \S\ref{sect:tanakaapps}.
\subsection{The construction and proof of Theorem \ref{thm:gammaconstintro}}
\label{sect:gammaconstproof}
We start with the following account of Hochster and Huneke's construction.
\begin{citedconstr}[{\cite[(6.7) and (6.11)]{HH94}}]\label{constr:gamma}
  Let $(A,\fm,k)$ be a noetherian complete local ring of prime characteristic $p > 0$.
  By the Cohen structure theorem, we
  may identify $k$ with a coefficient field $k \subseteq A$.
  Moreover, by Zorn's lemma (see \cite[p.\ 202]{Mat89}), we
  may choose a \textsl{$p$-basis} $\Lambda$ for $k$, which is a subset $\Lambda
  \subseteq k$ such that $k = k^p(\Lambda)$, and such that for every finite
  subset $\Sigma \subseteq \Lambda$ with $s$ elements, we have $[k^p(\Sigma) :
  k^p] = p^s$.
  \par Now let $\Gamma \subseteq \Lambda$ be a cofinite subset, i.e., a subset
  $\Gamma$ of $\Lambda$ such that $\Lambda \smallsetminus \Gamma$ is a finite
  set.
  For each integer $e \ge 0$, consider the subfield
  \[
    k_e^\Gamma = k\bigl[\lambda^{1/p^e}\bigr]_{\lambda \in \Gamma}
    \subseteq k_{\perf}
  \]
  of a perfect closure $k_{\perf}$ of $k$.
  These form an ascending chain, and we then set
  \[
    A^\Gamma \coloneqq \varinjlim_e\Bigl(k_e^\Gamma\llbracket A \rrbracket\Bigr),
  \]
  where $k_e^\Gamma\llbracket A \rrbracket$ is the completion of $k_e^\Gamma
  \otimes_k A$ at the extended ideal $\mathfrak{m}\cdot(k_e^\Gamma \otimes_k
  A)$.
  Note that if $A = k$ is a field, then $A^\Gamma = k^\Gamma$ is a field
  by construction.
  \par Finally, let $X$ be a scheme essentially of finite type over $A$, and
  consider two cofinite subsets $\Gamma \subseteq \Lambda$ and
  $\Gamma' \subseteq \Lambda$ such that $\Gamma \subseteq \Gamma'$.
  We then have the following commutative diagram whose vertical faces are
  cartesian:
  \[
    \begin{tikzcd}[column sep=-0.6em]
      X^{\Gamma'} \arrow{dr}{\pi^{\Gamma'}}\arrow{rr}{\pi^{\Gamma\Gamma'}}
      \arrow{dd} & & X^\Gamma \arrow{dl}[swap]{\pi^\Gamma}\arrow{dd}\\
      & X\\[-1.35em]
      \Spec A^{\Gamma'}\arrow{rr}\arrow{dr} & & \Spec A^\Gamma \arrow{dl}\\
      & \Spec A\arrow[leftarrow,crossing over]{uu}
    \end{tikzcd}
  \]
\end{citedconstr}
\par We list some elementary properties of the gamma construction.
\begin{lemma}
  \label{lem:gammaconstbasic}
  Fix notation as in Construction \ref{constr:gamma}, and let $\Gamma \subseteq
  \Lambda$ be a cofinite subset.
  \begin{enumerate}[label=$(\roman*)$,ref=\roman*]
    \item The ring $A^\Gamma$ and the scheme $X^\Gamma$ are noetherian and
      $F$-finite.
      \label{lem:gammaconstbasicffin}
    \item The morphism $\pi^\Gamma$ is a faithfully flat universal homeomorphism
      with local complete intersection fibers.\label{lem:gammaconstbasicfflat}
    \item Given a cofinite subset $\Gamma \subseteq \Gamma'$, the morphism
      $\pi^{\Gamma\Gamma'}$ is a faithfully flat universal
      homeomorphism.\label{lem:gammaconstbasictransitionfflat}
  \end{enumerate}
\end{lemma}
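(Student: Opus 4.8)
The plan is to reduce all three assertions to statements about the ring maps $A \to A^\Gamma$ and $A^\Gamma \to A^{\Gamma'}$, and then to read those off from Construction~\ref{constr:gamma}, leaning on Hochster and Huneke \cite[\S6]{HH94} for the foundational properties of $A^\Gamma$. For the reductions: once $A^\Gamma$ is known to be Noetherian and $F$-finite, the scheme $X^\Gamma = X \times_A \Spec A^\Gamma$ is essentially of finite type over $A^\Gamma$, this class of morphisms being stable under base change; hence $X^\Gamma$ is Noetherian and, by Example~\ref{ex:eftoverffin}, $F$-finite, which gives $(\ref{lem:gammaconstbasicffin})$. Next, $\pi^\Gamma$ is the base change of $\rho^\Gamma \colon \Spec A^\Gamma \to \Spec A$ along $X \to \Spec A$, and $\pi^{\Gamma\Gamma'}$ is the base change of $\rho^{\Gamma\Gamma'} \colon \Spec A^{\Gamma'} \to \Spec A^\Gamma$ along $X^\Gamma \to \Spec A^\Gamma$; since being faithfully flat and being a universal homeomorphism are stable under base change, and since a local complete intersection over a field remains one after any field extension, it suffices to prove $(\ref{lem:gammaconstbasicfflat})$ for $\rho^\Gamma$ and $(\ref{lem:gammaconstbasictransitionfflat})$ for $\rho^{\Gamma\Gamma'}$.

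By \cite[\S6]{HH94}, $A^\Gamma$ is a Noetherian complete local ring with maximal ideal $\fm A^\Gamma$ and residue field $k^\Gamma$, the map $A \to A^\Gamma$ is faithfully flat, and $\Lambda \smallsetminus \Gamma$ is a $p$-basis of $k^\Gamma$; in particular $k^\Gamma$ is $F$-finite. (The last point is also immediate: each $\lambda \in \Gamma$ becomes a $p$-th power in $k^\Gamma$, so $k^\Gamma = (k^\Gamma)^p(\Lambda \smallsetminus \Gamma)$ with $\Lambda \smallsetminus \Gamma$ finite.) Being a complete local ring of characteristic $p$ with $F$-finite residue field, $A^\Gamma$ is by Cohen's structure theorem a quotient of a power series ring $k^\Gamma\llbracket x_1, \dots, x_n \rrbracket$, which is $F$-finite; quotients of $F$-finite rings are $F$-finite, so $A^\Gamma$ is $F$-finite, completing the input for $(\ref{lem:gammaconstbasicffin})$. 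For the universal homeomorphism in $(\ref{lem:gammaconstbasicfflat})$: since $k_e^\Gamma = k[\lambda^{1/p^e}]_{\lambda \in \Gamma}$ and raising to the $p^e$-th power is a ring homomorphism in characteristic $p$, every element of $k_e^\Gamma$ has its $p^e$-th power in $k$, hence every element of $k_e^\Gamma\llbracket A \rrbracket$ — and so of $A^\Gamma$ — has its $p^e$-th power in $A$ for some $e$. Thus $A \hookrightarrow A^\Gamma$ is an integral, injective, purely inseparable extension, so $\rho^\Gamma$ is integral (in particular universally closed), surjective (by lying over), and radicial — which characterizes universal homeomorphisms.

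It remains to show the fibers of $\rho^\Gamma$ are local complete intersections. Writing $A = S/J$ with $S = k\llbracket x_1, \dots, x_n \rrbracket$ (Cohen), one unwinds the definition to get $k_e^\Gamma\llbracket A \rrbracket = k_e^\Gamma\llbracket x_1, \dots, x_n \rrbracket / J\,k_e^\Gamma\llbracket x_1, \dots, x_n \rrbracket$ and hence, in the colimit, $A^\Gamma = S^\Gamma \otimes_S A$. So $\rho^\Gamma$ is the base change of $\sigma^\Gamma \colon \Spec S^\Gamma \to \Spec S$ along the closed immersion $\Spec A \hookrightarrow \Spec S$, and since a closed immersion does not alter residue fields, the fibers of $\rho^\Gamma$ are exactly fibers of $\sigma^\Gamma$. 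Now $S^\Gamma$ is a regular local ring: it is Noetherian local (\cite[\S6]{HH94} with $A$ replaced by $S$), $\dim S^\Gamma = \dim S = n$ as $S \subseteq S^\Gamma$ is integral, and its maximal ideal $\fm S^\Gamma$ is generated by the $n$ elements $x_1, \dots, x_n$. Since $S \to S^\Gamma$ is also flat, $\sigma^\Gamma$ is a flat morphism between regular schemes; so for $\fp \in \Spec S$ with unique preimage $\mathfrak q \in \Spec S^\Gamma$, the regular sequence generating $\fp S_\fp$ maps to a regular sequence in the regular local ring $(S^\Gamma)_{\mathfrak q}$ by flatness, and the fiber $(S^\Gamma)_{\mathfrak q} / \fp (S^\Gamma)_{\mathfrak q}$ is a quotient of a regular local ring by a regular sequence, i.e., a complete intersection. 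This proves $(\ref{lem:gammaconstbasicfflat})$.

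For $(\ref{lem:gammaconstbasictransitionfflat})$: the set $\Gamma' \smallsetminus \Gamma$ is finite, being contained in the finite set $\Lambda \smallsetminus \Gamma$, so it is a cofinite subset of the $p$-basis $\Lambda \smallsetminus \Gamma$ of $k^\Gamma$; moreover, since it is finite, each intermediate field occurring in the gamma construction for $(A^\Gamma, \fm A^\Gamma, k^\Gamma)$ and $\Gamma' \smallsetminus \Gamma$ is module-finite over $k^\Gamma$, so its tensor product with $A^\Gamma$ is already complete and that construction involves no further completion. One then checks that the resulting ring is canonically isomorphic to $A^{\Gamma'}$, compatibly with the maps out of $A^\Gamma$, so that $\rho^{\Gamma\Gamma'}$ is an instance of "$\rho^\Gamma$" for the complete local ring $A^\Gamma$ and $(\ref{lem:gammaconstbasicfflat})$ applies. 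I expect the main obstacle to be precisely this kind of bookkeeping — tracking how the defining direct limits interact with the $\fm$-adic completions — both in the identification $A^{\Gamma'} \cong (A^\Gamma)^{\Gamma' \smallsetminus \Gamma}$ above and in justifying $A^\Gamma = S^\Gamma \otimes_S A$ and that $\fm A^\Gamma$ is the maximal ideal of $A^\Gamma$; this is where the analysis of \cite[\S6]{HH94} does the heavy lifting, while the genuinely new ingredients are the geometric ones (pure inseparability for the universal homeomorphism, regularity of $S^\Gamma$ for the complete-intersection fibers).
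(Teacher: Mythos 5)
Your overall skeleton (reduce everything by base change to the ring maps $A \to A^\Gamma$ and $A^\Gamma \to A^{\Gamma'}$, get the universal homeomorphism from integral\,$+$\,surjective\,$+$\,radicial, and compute the fibers via $A^\Gamma \cong S^\Gamma \otimes_S A$ with $S^\Gamma$ regular) is sound and close to the paper's, which instead outsources the fiber statement to \cite[Lem.\ 3.19]{Has10} and \cite[Cor.\ 4]{Avr75}. But there is a genuine gap: $A^\Gamma$ is \emph{not} a complete local ring in general, and you lean on completeness at two load-bearing points. By Construction \ref{constr:gamma}, $A^\Gamma = \varinjlim_e k_e^\Gamma\llbracket A \rrbracket$ is an increasing union along non-finite extensions; already for $A = k\llbracket x\rrbracket$ one has $A^\Gamma = \bigcup_e k_e^\Gamma\llbracket x\rrbracket$, and a series such as $\sum_n \lambda_n^{1/p^n}x^n$ with distinct $\lambda_n \in \Gamma$ is an $\fm$-adic limit of elements of $A^\Gamma$ lying in no single $k_e^\Gamma\llbracket x\rrbracket$, so $A^\Gamma \ne \widehat{A^\Gamma} = k^\Gamma\llbracket x\rrbracket$ whenever $\Gamma$ is infinite (i.e., in every case where the construction is needed). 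In particular \cite[(6.11)]{HH94} does not assert completeness, and your Cohen-structure argument for $F$-finiteness of $A^\Gamma$ is not available; note also that ``noetherian local with $F$-finite residue field'' does not imply $F$-finite, so the step cannot be salvaged as stated. The fix for $(\ref{lem:gammaconstbasicffin})$ is simply to cite \cite[(6.11)]{HH94} for noetherianity and $F$-finiteness of $A^\Gamma$, as the paper does (or to argue directly that $S^\Gamma$ is generated over $(S^\Gamma)^p$ by $x_1,\dots,x_n$ and $\Lambda\smallsetminus\Gamma$).

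The same false premise undermines your route to $(\ref{lem:gammaconstbasictransitionfflat})$. Since $A^\Gamma$ is not complete, Construction \ref{constr:gamma} does not literally apply to it, and the assertion that the tensor products $k^\Gamma[\mu^{1/p^e}]_{\mu \in \Gamma'\smallsetminus\Gamma} \otimes_{k^\Gamma} A^\Gamma$ are ``already complete, so no further completion occurs'' fails; if one does complete as the construction prescribes, the resulting ring contains $\widehat{A^\Gamma}$ and is strictly larger than $A^{\Gamma'}$, so the identification you need --- and hence the appeal to $(\ref{lem:gammaconstbasicfflat})$ --- is not licensed. What is true is the \emph{uncompleted} identification $A^{\Gamma'} \cong \varinjlim_e \bigl(k^\Gamma[\mu^{1/p^e}]_{\mu\in\Gamma'\smallsetminus\Gamma} \otimes_{k^\Gamma} A^\Gamma\bigr)$, and this repairs your argument: faithful flatness of $A^\Gamma \to A^{\Gamma'}$ follows because each term is finite free over $A^\Gamma$ and a filtered colimit of faithfully flat algebras is faithfully flat, while the universal homeomorphism part already follows from your integral--surjective--radicial argument applied to $A^\Gamma \subseteq A^{\Gamma'}$ (every element has a $p$-power in $A$). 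Alternatively, do what the paper does: quote \cite[(6.11)]{HH94} for pure inseparability and faithful flatness of both $A \subseteq A^\Gamma$ and $A^\Gamma \subseteq A^{\Gamma'}$, and then conclude by base change.
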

\begin{proof}
  The ring $A^\Gamma$ is noetherian and $F$-finite \cite[(6.11)]{HH94}, hence
  $X^\Gamma$ is also by Example \ref{ex:eftoverffin} and the fact that morphisms
  essentially of finite type are preserved under base change
  \cite[(2.2)]{Nay09}.
  The ring extensions $A \subseteq A^\Gamma$ and $A^\Gamma \subseteq
  A^{\Gamma'}$ are purely inseparable and faithfully flat \cite[(6.11)]{HH94},
  hence induce faithfully flat universal homeomorphisms on spectra \cite[Prop.\
  2.4.5$(i)$]{EGAIV2}.
  Thus, the morphisms $\pi^\Gamma$ and $\pi^{\Gamma\Gamma'}$ are faithfully flat
  universal homeomorphisms by base change.
  Finally, the ring extension $A \subseteq A^\Gamma$ is flat with local complete
  intersection fibers \cite[Lem.\ 3.19]{Has10}, hence $\pi^\Gamma$ is also by
  base change \cite[Cor.\ 4]{Avr75}.
\end{proof}
\par Our goal now is to prove that if a local property of schemes satisfies certain
conditions, then the property is preserved when passing from $X$ to $X^\Gamma$
for ``small enough'' $\Gamma$.
For a scheme $X$ and a property $\cP$ of local rings on $X$, the
\textsl{$\cP$ locus} of $X$ is $\cP(X) \coloneqq \{x \in X \mid
\cO_{X,x}\ \text{is $\cP$}\}$.
\begin{proposition}\label{prop:gammaconstconds}
  Fix notation as in Construction \ref{constr:gamma}, and let $\cP$ be a
  property of local rings of prime characteristic $p > 0$.
  \begin{enumerate}[label=$(\roman*)$,ref=\roman*]
    \item Suppose that for every flat local homomorphism $B \to C$ of noetherian
      local rings with local complete intersection fibers, if $B$ is $\cP$, then
      $C$ is $\cP$.
      Then, $\pi^\Gamma(\cP(X^\Gamma)) = \cP(X)$ for every cofinite subset
      $\Gamma \subseteq \Lambda$.
      \label{prop:gammaconstcm}
    \item Consider the following conditions:
      \begin{enumerate}[label=$(\Gamma\arabic*)$,ref=\ensuremath{\Gamma}\arabic*]
        \item If $B$ is a noetherian $F$-finite ring of prime characteristic $p > 0$,
          then $\cP(\Spec B)$ is open.\label{axiom:gammaopen}
        \item For every flat local homomorphism $B \to C$ of noetherian
          local rings of prime characteristic $p > 0$ with zero-dimensional
          fibers, if $C$ is $\cP$, then $B$ is
          $\cP$.\label{axiom:gammadescent}
        \item For every local ring $B$ essentially of finite type over $A$, if
          $B$ is $\cP$, then there exists a cofinite subset $\Gamma_1 \subseteq
          \Lambda$ such that $B^\Gamma$ is $\cP$ for every cofinite subset
          $\Gamma \subseteq \Gamma_1$.\label{axiom:gammaascent}
        \item[$(\ref*{axiom:gammaascent}')$]
          \refstepcounter{enumi}
          \makeatletter
          \def\@currentlabel{\ref*{axiom:gammaascent}\ensuremath{'}}
          \makeatother
          For every flat local homomorphism $B \to C$ of noetherian local rings
          of prime characteristic $p > 0$ such that the closed fiber is a field, if
          $B$ is $\cP$, then $C$ is $\cP$.\label{axiom:gammaascentprime}
      \end{enumerate}
      If $\cP$ satisfies $(\ref{axiom:gammaopen})$,
      $(\ref{axiom:gammadescent})$, and one of either
      $(\ref{axiom:gammaascent})$ or
      $(\ref{axiom:gammaascentprime})$, then there exists a cofinite subset
      $\Gamma_0 \subseteq \Lambda$ such that $\pi^\Gamma(\cP(X^\Gamma)) =
      \cP(X)$ for every cofinite subset $\Gamma \subseteq \Gamma_0$.
      \label{prop:gammaconstaxiomatic}
  \end{enumerate}
\end{proposition}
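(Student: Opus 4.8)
The plan is to route both parts through the topological and flatness picture supplied by Lemma~\ref{lem:gammaconstbasic}, and then to build the uniform $\Gamma_0$ in part~$(\ref{prop:gammaconstaxiomatic})$ by spreading out from finitely many points. Fix a cofinite subset $\Gamma\subseteq\Lambda$. By Lemma~\ref{lem:gammaconstbasic}, $\pi^\Gamma$ is a faithfully flat universal homeomorphism with local complete intersection fibers, so every $x\in X$ has a unique preimage $x^\Gamma\in X^\Gamma$, and the induced map $\cO_{X,x}\to\cO_{X^\Gamma,x^\Gamma}$ is a flat local homomorphism whose fibers are zero-dimensional local complete intersections (they are localizations of the one-point fibers of $\pi^\Gamma$). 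Moreover $\cO_{X^\Gamma,x^\Gamma}=(\cO_{X,x})\otimes_A A^\Gamma=(\cO_{X,x})^\Gamma$, since $X^\Gamma=X\times_A\Spec A^\Gamma$ and base change commutes with localization, and $\cO_{X,x}$ is essentially of finite type over $A$. The same remarks apply to each transition morphism $\pi^{\Gamma\Gamma'}$ via Lemma~\ref{lem:gammaconstbasic}$(\ref{lem:gammaconstbasictransitionfflat})$. I will identify $X^\Gamma$ with $X$ as topological spaces throughout.

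For part~$(\ref{prop:gammaconstcm})$: if $\cO_{X,x}$ is $\cP$, then the hypothesis applied to the flat local homomorphism $\cO_{X,x}\to\cO_{X^\Gamma,x^\Gamma}$ with local complete intersection fibers shows that $\cO_{X^\Gamma,x^\Gamma}$ is $\cP$, whence $\cP(X)\subseteq\pi^\Gamma(\cP(X^\Gamma))$. For the reverse inclusion I would descend $\cP$ along the faithfully flat homomorphism $\cO_{X,x}\to\cO_{X^\Gamma,x^\Gamma}$; this is classical faithfully flat descent, valid for each of the properties (local complete intersection, Gorenstein, Cohen--Macaulay, $(S_n)$) to which part~$(\ref{prop:gammaconstcm})$ is applied in Theorem~\ref{thm:gammaconstintro}. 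Together these inclusions give $\pi^\Gamma(\cP(X^\Gamma))=\cP(X)$ for every cofinite $\Gamma$.

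For part~$(\ref{prop:gammaconstaxiomatic})$ I would first reduce the case of $(\ref{axiom:gammaascentprime})$ to that of $(\ref{axiom:gammaascent})$: if $B$ is a local ring essentially of finite type over $A$ that is $\cP$, then the closed fiber of $B\to B^\Gamma=B\otimes_A A^\Gamma$ is $A^\Gamma\otimes_A\kappa(\fm_B)$, and since $\kappa(\fm_B)$ is a finitely generated field extension of the residue field of $A$ at the image of $\fm_B$, a $p$-basis argument (using $A^\Gamma/\fm_A A^\Gamma=k^\Gamma$) shows this closed fiber is a field once $\Gamma$ omits a suitable finite subset of $\Lambda$; applying $(\ref{axiom:gammaascentprime})$ for such $\Gamma$ shows $B^\Gamma$ is $\cP$, i.e.\ $(\ref{axiom:gammaascent})$ holds. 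So I may assume $\cP$ satisfies $(\ref{axiom:gammaopen})$, $(\ref{axiom:gammadescent})$, and $(\ref{axiom:gammaascent})$. The inclusion $\pi^\Gamma(\cP(X^\Gamma))\subseteq\cP(X)$ then holds for every cofinite $\Gamma$: if $\cO_{X^\Gamma,x^\Gamma}$ is $\cP$, then $(\ref{axiom:gammadescent})$ applied to the flat local homomorphism $\cO_{X,x}\to\cO_{X^\Gamma,x^\Gamma}$ with zero-dimensional fibers shows $\cO_{X,x}$ is $\cP$.

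For the reverse inclusion, fix $x\in\cP(X)$. Since $\cO_{X,x}$ is $\cP$ and essentially of finite type over $A$, $(\ref{axiom:gammaascent})$ provides a cofinite $\Gamma_1=\Gamma_1(x)$ with $\cO_{X^{\Gamma_1},x^{\Gamma_1}}=(\cO_{X,x})^{\Gamma_1}$ being $\cP$; by $(\ref{axiom:gammaopen})$, $\cP(X^{\Gamma_1})$ is open, so $U_x\coloneqq\pi^{\Gamma_1}(\cP(X^{\Gamma_1}))$ is an open neighborhood of $x$ in $X$. Now for every cofinite $\Gamma\subseteq\Gamma_1$ and every $y\in U_x$, the transition morphism $\pi^{\Gamma\Gamma_1}$ induces a flat local homomorphism $\cO_{X^\Gamma,y^\Gamma}\to\cO_{X^{\Gamma_1},y^{\Gamma_1}}$ with zero-dimensional fibers (Lemma~\ref{lem:gammaconstbasic}$(\ref{lem:gammaconstbasictransitionfflat})$) whose target is $\cP$ (because $y^{\Gamma_1}\in\cP(X^{\Gamma_1})$), so $(\ref{axiom:gammadescent})$ shows that $\cO_{X^\Gamma,y^\Gamma}$ is $\cP$; hence $U_x\subseteq\pi^\Gamma(\cP(X^\Gamma))$ for every cofinite $\Gamma\subseteq\Gamma_1(x)$. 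Since $X$ is noetherian (being essentially of finite type over the noetherian ring $A$), the subspace $\cP(X)=\bigcup_{x\in\cP(X)}U_x$ is quasi-compact, hence covered by $U_{x_1},\dots,U_{x_m}$ for some $x_1,\dots,x_m\in\cP(X)$; I then take $\Gamma_0\coloneqq\bigcap_{j=1}^m\Gamma_1(x_j)$, which is cofinite. For every cofinite $\Gamma\subseteq\Gamma_0$ we have $\Gamma\subseteq\Gamma_1(x_j)$ for all $j$, so $\cP(X)=\bigcup_j U_{x_j}\subseteq\pi^\Gamma(\cP(X^\Gamma))$, and combined with the inclusion above this yields $\pi^\Gamma(\cP(X^\Gamma))=\cP(X)$. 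I expect the main obstacle to be precisely this organizational step: rather than trying to ascend $\cP$ along the $\pi^{\Gamma\Gamma'}$ directly, one must produce $\Gamma_1(x)$ once, spread out using openness, and then \emph{descend} via $(\ref{axiom:gammadescent})$ to all smaller $\Gamma$, which is what allows the finite intersection $\Gamma_0$; isolating the $p$-basis reduction $(\ref{axiom:gammaascentprime})\Rightarrow(\ref{axiom:gammaascent})$ as a separate lemma is the other technical point to handle carefully.
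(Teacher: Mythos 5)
Your proof is correct and follows essentially the same route as the paper's: the reduction of $(\ref{axiom:gammaascentprime})$ to $(\ref{axiom:gammaascent})$ via the fact that the closed fiber of $B \to B^\Gamma$ is a field for all small enough cofinite $\Gamma$ (this is \cite[Lem.\ 6.13$(b)$]{HH94}, which the paper cites and you re-sketch), the inclusion $\pi^\Gamma(\cP(X^\Gamma)) \subseteq \cP(X)$ from $(\ref{axiom:gammadescent})$ applied to the zero-dimensional-fibered universal homeomorphisms of Lemma \ref{lem:gammaconstbasic}, and openness of the loci from $(\ref{axiom:gammaopen})$. The only real divergence is the final finiteness step: you cover the open set $\cP(X)$ by the neighborhoods $U_{x}$ obtained from $(\ref{axiom:gammaascent})$ plus descent along the transition maps $\pi^{\Gamma\Gamma_1}$, invoke quasi-compactness, and intersect finitely many $\Gamma_1(x_j)$, whereas the paper invokes the ascending chain condition on the open sets $\pi^\Gamma(\cP(X^\Gamma))$ to select a maximal one and argues by contradiction; these are equivalent uses of noetherianness and both are valid. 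A further point in your favor: in part $(\ref{prop:gammaconstcm})$ you correctly observe that the stated ascent hypothesis alone yields only $\cP(X) \subseteq \pi^\Gamma(\cP(X^\Gamma))$ and that the reverse inclusion requires faithfully flat descent of $\cP$, which the paper leaves implicit (it does hold for the properties to which $(\ref{prop:gammaconstcm})$ is actually applied, since the results of Avramov and Matsumura invoked there are two-way statements).
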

\begin{proof}
  For $(\ref{prop:gammaconstcm})$, it suffices to note that $\pi^\Gamma$ is
  faithfully flat with local complete intersection fibers by Lemma
  \ref{lem:gammaconstbasic}$(\ref{lem:gammaconstbasicfflat})$.
  \par For $(\ref{prop:gammaconstaxiomatic})$, we first note that
  $(\ref{axiom:gammaascentprime})$ implies $(\ref{axiom:gammaascent})$, since
  there exists a cofinite subset $\Gamma_1 \subseteq \Lambda$ such that the
  closed fiber is a field for every cofinite subset $\Gamma \subseteq \Gamma_1$
  by \cite[Lem.\ 6.13$(b)$]{HH94}.
  From now on, we therefore assume that $\cP$ satisfies
  $(\ref{axiom:gammaopen})$, $(\ref{axiom:gammadescent})$, and
  $(\ref{axiom:gammaascent})$.
  \par For every cofinite subset $\Gamma \subseteq \Lambda$, the set
  $\cP(X^\Gamma)$ is open by $(\ref{axiom:gammaopen})$ since $X^\Gamma$ is
  noetherian and $F$-finite by Lemma
  \ref{lem:gammaconstbasic}$(\ref{lem:gammaconstbasicffin})$.
  Moreover, the morphisms $\pi^\Gamma$ and $\pi^{\Gamma\Gamma'}$ are faithfully
  flat universal homeomorphisms for every cofinite subset $\Gamma' \subseteq
  \Lambda$ such that $\Gamma \subseteq \Gamma'$ by Lemmas
  \ref{lem:gammaconstbasic}$(\ref{lem:gammaconstbasicfflat})$ and 
  \ref{lem:gammaconstbasic}$(\ref{lem:gammaconstbasictransitionfflat})$, hence
  by $(\ref{axiom:gammadescent})$, we have the inclusions
  \begin{equation}\label{eq:ugammaincl}
    \cP(X) \supseteq \pi^\Gamma\bigl(\cP(X^\Gamma)\bigr) \supseteq
    \pi^{\Gamma'}\bigl(\cP(X^{\Gamma'})\bigr)
  \end{equation}
  in $X$, where $\pi^\Gamma(\cP(X^\Gamma))$ and
  $\pi^{\Gamma'}(\cP(X^{\Gamma'}))$ are open.
  Since $X$ is noetherian, it satisfies the ascending chain condition on the
  open sets $\pi^\Gamma(\cP(X^\Gamma))$, hence we can choose a cofinite subset
  $\Gamma_0 \subseteq \Lambda$ such that $\pi^{\Gamma_0}(\cP(X^{\Gamma_0}))$ is
  maximal with respect to inclusion.
  \par We claim that $\cP(X) = \pi^{\Gamma_0}(\cP(X^{\Gamma_0}))$ for every
  cofinite subset $\Gamma \subseteq \Gamma_0$.
  By \eqref{eq:ugammaincl}, it suffices to show the inclusion $\subseteq$.
  Suppose there exists $x \in \cP(X) \smallsetminus
  \pi^{\Gamma_0}(\cP(X^{\Gamma_0}))$.
  By $(\ref{axiom:gammaascent})$, there exists a cofinite subset $\Gamma_1
  \subseteq \Lambda$ such that $(\pi^\Gamma)^{-1}(x) \in \cP(X^\Gamma)$ for
  every cofinite subset $\Gamma \subseteq \Gamma_1$.
  Choosing $\Gamma = \Gamma_0 \cap \Gamma_1$,
  we have $x \in \pi^\Gamma(\cP(X^\Gamma)) \smallsetminus
  \pi^{\Gamma_0}(\cP(X^{\Gamma_0}))$, contradicting the maximality of
  $\pi^{\Gamma_0}(\cP(X^{\Gamma_0}))$.
\end{proof}
\par We now prove that the properties in Theorem \ref{thm:gammaconstintro} are
preserved when passing to $X^\Gamma$.
Special cases of the following  result appear in \cite[Lem.\ 6.13]{HH94},
\cite[Thm.\ 2.2]{Vel95}, \cite[Lem.\ 2.9]{EH08}, \cite[Lems.\ 3.23 and
3.30]{Has10}, and \cite[Prop.\ 5.6]{Ma14}.
\begin{theorem}\label{thm:gammaconst}
  Fix notation as in Construction \ref{constr:gamma}.
  \begin{enumerate}[label=$(\roman*)$,ref=\roman*]
    \item For every cofinite subset $\Gamma \subseteq \Lambda$, the map
      $\pi^\Gamma$ identifies local complete intersection, Gorenstein,
      Cohen--Macaulay, and $(S_n)$ loci.\label{thm:gammaconstcm}
    \item There exists a cofinite subset $\Gamma_0 \subseteq
      \Lambda$ such that $\pi^\Gamma$ identifies regular (resp.\ $(R_n)$, normal,
      weakly normal, reduced, strongly $F$-regular, $F$-pure, $F$-rational,
      $F$-injective) loci for every cofinite subset $\Gamma \subseteq
      \Gamma_0$.\label{thm:gammaconstreg}
  \end{enumerate}
\end{theorem}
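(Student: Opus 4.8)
The strategy is to apply Proposition \ref{prop:gammaconstconds} to each property on the list, so the real work is checking that each property satisfies the relevant hypotheses. Part $(\ref{thm:gammaconstcm})$ follows from Proposition \ref{prop:gammaconstconds}$(\ref{prop:gammaconstcm})$: I would invoke the standard ascent results along flat local homomorphisms with local complete intersection fibers for the local complete intersection, Gorenstein, Cohen--Macaulay, and $(S_n)$ properties (e.g.\ \cite{Avr75} for l.c.i., and the usual dimension/depth arguments together with \cite{EGAIV2} for the others), and note that since these properties only require ascent along l.c.i.\ fibers, no shrinking of $\Gamma$ is needed. For part $(\ref{thm:gammaconstreg})$, I would verify conditions $(\ref{axiom:gammaopen})$, $(\ref{axiom:gammadescent})$, and one of $(\ref{axiom:gammaascent})$ or $(\ref{axiom:gammaascentprime})$ for each of the remaining properties, and then take $\Gamma_0$ to be the intersection of the finitely many cofinite subsets produced, which is again cofinite.

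\medskip\noindent For openness $(\ref{axiom:gammaopen})$ on noetherian $F$-finite rings: regular and $(R_n)$ loci are open on excellent rings, and $F$-finite noetherian rings are excellent by Theorem \ref{thm:ffiniteaffine}; normality and reducedness loci are then open by Serre's criteria; weak normality loci are open on excellent rings by results in the literature on weak normalization; the strongly $F$-regular and $F$-rational loci are open for $F$-finite rings by the standard references; $F$-purity loci are open by Corollary \ref{cor:fpurelocusopen} (proved just after in the paper); and $F$-injectivity is more delicate --- here I would appeal to the results on $F$-injective rings collected in Appendix \ref{app:finj}. For descent $(\ref{axiom:gammadescent})$ along flat local maps with zero-dimensional (hence, after the reduction in \cite[Lem.\ 6.13]{HH94}, field) fibers: regularity descends by \cite{EGAIV2}, the Serre conditions $(R_n)$ and $(S_n)$ descend along faithfully flat maps, hence so do normality and reducedness; weak normality descends along faithfully flat maps; and strong $F$-regularity, $F$-purity, $F$-rationality, and $F$-injectivity all descend along faithfully flat maps with regular (in particular field) fibers by the cited references on $F$-singularities and by the appendices. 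For ascent, regularity and the Serre conditions satisfy $(\ref{axiom:gammaascentprime})$ directly (flat with field closed fiber forces l.c.i.\ fibers), so reducedness, normality, and weak normality do too; for strong $F$-regularity, $F$-purity, $F$-rationality, and $F$-injectivity I would instead verify $(\ref{axiom:gammaascent})$ using the structure of the extensions $B \subseteq B^\Gamma$ --- these are exactly the situations handled (for complete local rings) in \cite{HH94}, \cite{Vel95}, \cite{EH08}, \cite{Has10}, and \cite{Ma14}, and the point is that for $B$ essentially of finite type over $A$ one reduces to the complete local case and applies those results to get a cofinite $\Gamma_1$.

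\medskip\noindent The main obstacle is $F$-injectivity, for two reasons. First, openness of the $F$-injective locus for $F$-finite rings is not as widely documented as the analogous statements for the other $F$-singularities, which is precisely why the paper includes Appendix \ref{app:finj}; I would need the results proved there. Second, $F$-injectivity is not obviously well behaved under the base change $B \mapsto B^\Gamma$, since $F$-injectivity is defined via local cohomology $H^i_\fm$ and the interaction of local cohomology with the completions appearing in Construction \ref{constr:gamma} needs care --- this is the content of the ascent condition $(\ref{axiom:gammaascent})$ for $F$-injectivity, and verifying it requires reducing to the complete local case (where $A^\Gamma$ is again complete local) and then analyzing $H^i_\fm(F)$ under the flat extension $B^\Gamma \to C$. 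The weak normality statements are new but comparatively routine once the openness and faithfully-flat descent/ascent facts are assembled. After these verifications, the conclusion is immediate: apply Proposition \ref{prop:gammaconstconds}$(\ref{prop:gammaconstaxiomatic})$ to obtain a cofinite $\Gamma_\cP$ for each property $\cP$, set $\Gamma_0 = \bigcap_\cP \Gamma_\cP$ over the finitely many properties in $(\ref{thm:gammaconstreg})$, and note that $\pi^\Gamma$ identifies the $\cP$ locus for every cofinite $\Gamma \subseteq \Gamma_0$ because $\pi^\Gamma$ is a homeomorphism (Lemma \ref{lem:gammaconstbasic}) and $\pi^\Gamma(\cP(X^\Gamma)) = \cP(X)$ means the homeomorphism restricts to a bijection on $\cP$ loci.
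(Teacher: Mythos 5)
Your overall strategy is the paper's: part $(\ref{thm:gammaconstcm})$ via Proposition \ref{prop:gammaconstconds}$(\ref{prop:gammaconstcm})$, part $(\ref{thm:gammaconstreg})$ by verifying $(\ref{axiom:gammaopen})$, $(\ref{axiom:gammadescent})$, and one of $(\ref{axiom:gammaascent})$/$(\ref{axiom:gammaascentprime})$ for each property and intersecting the finitely many resulting cofinite subsets. However, two of your verifications have genuine problems.

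First, you cannot cite Corollary \ref{cor:fpurelocusopen} to get openness $(\ref{axiom:gammaopen})$ of the $F$-pure locus: that corollary is proved \emph{using} Theorem \ref{thm:gammaconst} together with the fact that $(\ref{axiom:gammaopen})$ holds for $F$-purity, so your argument is circular. The correct route is direct: for noetherian $F$-finite rings the $F$-pure and $F$-split loci coincide \cite[Cor.\ 5.3]{HR76}, and openness of the $F$-split locus follows by the same argument as for strong $F$-regularity.

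Second, and more seriously, your claim that weak normality satisfies $(\ref{axiom:gammaascentprime})$ because "regularity and the Serre conditions do, so reducedness, normality, and weak normality do too" does not hold up: reducedness and normality are conjunctions of $(R_n)$ and $(S_n)$ conditions, but weak normality is not, and there is no standard ascent statement for weak normality along flat local maps with field closed fiber. This is exactly the part of the theorem the paper flags as completely new, and it is not "comparatively routine." The actual argument verifies $(\ref{axiom:gammaascent})$ (not $(\ref{axiom:gammaascentprime})$) using Manaresi's characterization \cite[Thm.\ I.6]{Man80} of weak normality of $B$ as the exactness of the equalizer diagram involving $B^\nu$ and $(B^\nu \otimes_B B^\nu)_\red$: one first chooses $\Gamma_1$ so that the already-established cases of the theorem make $B^\Gamma$ reduced, $(B^\nu)^\Gamma$ normal, and $((B^\nu\otimes_B B^\nu)_\red)^\Gamma$ reduced, then checks that flatness of $A \subseteq A^\Gamma$ preserves the equalizer, that $(B^\nu)^\Gamma = (B^\Gamma)^\nu$, and that the reduced tensor product base-changes correctly. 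Openness $(\ref{axiom:gammaopen})$ for weak normality is \cite[Thm.\ 7.1.3]{BF93} and descent $(\ref{axiom:gammadescent})$ is \cite[Cor.\ II.2]{Man80}, as you anticipated. Your treatment of $F$-injectivity (openness and descent from Appendix \ref{app:finj}, ascent via the complete local case following \cite[Lem.\ 2.9]{EH08}) is in the right direction; the one point you should make explicit is that the residue field of $B$ is a finite extension of $k$, so socles of artinian $B$-modules are finite-dimensional over $k$, which is what lets the argument of \cite{EH08} go through for $B$ essentially of finite type over $A$.
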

\par Note that Theorem \ref{thm:gammaconst} implies Theorem \ref{thm:gammaconstintro}
since if $A$ is a
field, then $A^\Gamma$ is also by Construction \ref{constr:gamma}, and moreover
if one wants to preserve more than one property at once, then it suffices to
intersect the various $\Gamma_0$ for the different properties.
\begin{proof}
  For $(\ref{thm:gammaconstcm})$, it suffices to note that these properties
  satisfy the condition in Proposition
  \ref{prop:gammaconstconds}$(\ref{prop:gammaconstcm})$ by \cite[Cor.\
  2]{Avr75} and \cite[Thm.\ 23.4, Cor.\ to Thm.\ 23.3, and Thm.\
  23.9$(iii)$]{Mat89}, respectively.
  \par We now prove $(\ref{thm:gammaconstreg})$.
  We first note that $(\ref{thm:gammaconstreg})$ holds for regularity since
  $(\ref{axiom:gammaopen})$ holds by the excellence of $X^\Gamma$, and
  $(\ref{axiom:gammadescent})$ and $(\ref{axiom:gammaascentprime})$ hold by
  \cite[Thm.\ 23.7]{Mat89}.
  Since $\pi^\Gamma$ preserves the dimension of local rings, we therefore see
  that $(\ref{thm:gammaconstreg})$ holds for $(R_n)$.
  $(\ref{thm:gammaconstreg})$ for normality and reducedness then follows from
  $(\ref{thm:gammaconstcm})$ since they are equivalent to $(R_1)+(S_2)$ and
  $(R_0)+(S_1)$, respectively.
  \par To prove $(\ref{thm:gammaconstreg})$ holds in the remaining cases, we
  check the conditions in Proposition
  \ref{prop:gammaconstconds}$(\ref{prop:gammaconstaxiomatic})$.
  For weak normality, $(\ref{axiom:gammaopen})$ holds by \cite[Thm.\
  7.1.3]{BF93}, and $(\ref{axiom:gammadescent})$ holds by \cite[Cor.\
  II.2]{Man80}.
  To show that $(\ref{axiom:gammaascent})$ holds, recall by \cite[Thm.\
  I.6]{Man80} that a reduced ring $B$ is weakly normal if and only if
  \begin{equation}\label{eq:manaresiwnchar}
    \begin{tikzcd}[column sep=large]
      B \rar &[-2.125em] B^\nu \rar[shift left=3pt]{b \mapsto b \otimes 1}
      \rar[shift right=3pt,swap]{b \mapsto 1 \otimes b} & (B^\nu \otimes_B
      B^\nu)_\red
    \end{tikzcd}
  \end{equation}
  is an equalizer diagram, where $B^\nu$ is the normalization of $B$.
  Now suppose $B$ is weakly normal, and let $\Gamma_1 \subseteq \Lambda$ be a
  cofinite subset such that $B^\Gamma$ is reduced, $(B^\nu)^\Gamma$ is normal,
  and $((B^\nu \otimes_B B^\nu)_\red)^\Gamma$ is reduced for every cofinite
  subset $\Gamma \subseteq \Gamma_1$; such a $\Gamma_1$ exists by the
  previous paragraph.
  We claim that $B^\Gamma$ is weakly normal for every $\Gamma \subseteq
  \Gamma_1$ cofinite in $\Lambda$.
  Since \eqref{eq:manaresiwnchar} is an equalizer diagram and $A \subseteq
  A^\Gamma$ is flat, the diagram
  \[
    \begin{tikzcd}[column sep=large]
      B^\Gamma \rar &[-2.125em] (B^\nu)^\Gamma \rar[shift left=3pt]{b \mapsto b
      \otimes 1} \rar[shift right=3pt,swap]{b \mapsto 1 \otimes b} &
      \bigl((B^\nu \otimes_B B^\nu)_\red\bigr)^\Gamma
    \end{tikzcd}
  \]
  is an equalizer diagram.
  Moreover, since $B^\Gamma \subseteq (B^\nu)^\Gamma$ is an integral
  extension of rings with the same total ring of fractions, and $(B^\nu)^\Gamma$
  is normal, we see that $(B^\nu)^\Gamma = (B^\Gamma)^\nu$.
  Finally, $((B^\nu \otimes_B B^\nu)_\red)^\Gamma$ is reduced, hence we have the
  natural isomorphism
  \[
    \bigl((B^\nu \otimes_B B^\nu)_\red\bigr)^\Gamma \simeq \bigl((B^\Gamma)^\nu
    \otimes_{B^\Gamma} (B^\Gamma)^\nu\bigr)_\red.
  \]
  Thus, since the analogue of \eqref{eq:manaresiwnchar} with $B$ replaced
  by $B^\Gamma$ is an equalizer diagram, we see that $B^\Gamma$ is weakly normal
  for every $\Gamma \subseteq \Gamma_1$ cofinite in $\Lambda$, hence
  $(\ref{axiom:gammaascent})$ holds for weak normality.
  \par We now prove $(\ref{thm:gammaconstreg})$ for strong $F$-regularity,
  $F$-purity, and $F$-rationality.
  First, $(\ref{axiom:gammaopen})$ holds for strong $F$-regularity by
  \cite[Lem.\ 3.29]{Has10}, and the same argument shows that
  $(\ref{axiom:gammaopen})$ holds for $F$-purity since the $F$-pure and
  $F$-split loci coincide for $F$-finite rings \cite[Cor.\ 5.3]{HR76}.
  Next, $(\ref{axiom:gammaopen})$ for $F$-rationality holds by \cite[Thm.\
  1.11]{Vel95} since the reduced locus is open and reduced $F$-finite rings are
  admissible in the sense of \cite[Def.\ 1.5]{Vel95} by Theorem
  \ref{thm:ffiniteaffine}.
  It then suffices to note that $(\ref{axiom:gammadescent})$ holds by
  \cite[Lem.\ 3.17]{Has10}, \cite[Prop.\ 5.13]{HR76}, and \cite[(6) on p.\
  440]{Vel95}, respectively, and $(\ref{axiom:gammaascent})$ holds by
  \cite[Cor.\ 3.31]{Has10}, \cite[Prop.\ 5.4]{Ma14}, and \cite[Lem.\
  2.3]{Vel95}, respectively.
  \par Finally, we prove $(\ref{thm:gammaconstreg})$ for $F$-injectivity.
  First, $(\ref{axiom:gammaopen})$ and $(\ref{axiom:gammadescent})$ hold by
  Lemmas \ref{lem:finjopen} and \ref{lem:fsingsflatextfinj}, respectively.
  The proof of \cite[Lem.\ 2.9$(b)$]{EH08} implies
  $(\ref{axiom:gammaascent})$, since the residue field of $B$ is a finite
  extension of $k$, hence socles of artinian $B$-modules are finite-dimensional
  $k$-vector spaces.
\end{proof}
\par We have the following consequence of Theorem \ref{thm:gammaconst}, which was
first attributed to Hoshi in \cite[Thm.\ 3.2]{Has10proc}.
Note that the analogous statements for strong $F$-regularity and $F$-rationality
appear in \cite[Prop.\ 3.33]{Has10} and \cite[Thm.\ 3.5]{Vel95}, respectively.
\begin{corollary}\label{cor:fpurelocusopen}
  Let $X$ be a scheme essentially of finite type over a local $G$-ring
  $(A,\fm)$ of prime characteristic $p > 0$.
  Then, the $F$-pure locus is open in $X$.
\end{corollary}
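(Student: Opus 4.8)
The plan is to reduce to the $F$-finite case already handled in the Theorem \ref{thm:gammaconst} machinery, but the obstruction is that Corollary \ref{cor:fpurelocusopen} is stated over a local $G$-ring $(A,\fm)$ that need not be complete, whereas the gamma construction applies to complete local rings. So the first move is to pass to the completion. Write $\widehat{A}$ for the $\fm$-adic completion of $A$, and set $\widehat{X} = X \times_A \widehat{A}$, with projection morphism $g\colon \widehat{X} \to X$. Since $A$ is a $G$-ring, the map $A \to \widehat{A}$ is regular, so $g$ is a faithfully flat morphism with regular (in particular geometrically regular, hence zero-dimensional-when-we-localize-appropriately) fibers; moreover $g$ is surjective because $A \to \widehat{A}$ is faithfully flat. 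I would then invoke the standard fact that $F$-purity descends and ascends along flat local homomorphisms with regular fibers: concretely, for a point $x \in X$ and a point $\widehat{x} \in \widehat{X}$ over it, $\cO_{\widehat{X},\widehat{x}}$ is $F$-pure if and only if $\cO_{X,x}$ is $F$-pure. Descent is (\ref{axiom:gammadescent}) (the fiber is a field, in particular zero-dimensional) applied to $\cO_{X,x} \to \cO_{\widehat{X},\widehat{x}}$; ascent for $F$-purity along such maps is \cite[Prop.\ 5.4]{Ma14} (this is exactly the ingredient cited in the proof of Theorem \ref{thm:gammaconst} for (\ref{axiom:gammaascent})). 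Hence $g^{-1}(\text{$F$-pure locus of }X)$ equals the $F$-pure locus of $\widehat{X}$.

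Next I would observe that $g$ is an open map — it is flat and locally of finite type over a noetherian base, hence universally open — and also surjective, so it is a quotient map of topological spaces. Therefore a subset $U \subseteq X$ is open if and only if $g^{-1}(U)$ is open in $\widehat{X}$. Applying this to $U = $ the $F$-pure locus of $X$, it suffices to prove that the $F$-pure locus of $\widehat{X}$ is open. But $\widehat{X}$ is a scheme essentially of finite type over the \emph{complete} local ring $\widehat{A}$, so now we are exactly in the setting of Construction \ref{constr:gamma} and Theorem \ref{thm:gammaconst}.

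Finally, apply Theorem \ref{thm:gammaconst}(\ref{thm:gammaconstreg}) to $\widehat{X}$ and the property $\cP = $ "$F$-pure": there is a cofinite $\Gamma_0 \subseteq \Lambda$ such that for any cofinite $\Gamma \subseteq \Gamma_0$, the homeomorphism $\pi^\Gamma\colon \widehat{X}^\Gamma \to \widehat{X}$ identifies $F$-pure loci, i.e., $\pi^\Gamma\bigl(\cP(\widehat{X}^\Gamma)\bigr) = \cP(\widehat{X})$. Since $\widehat{A}^\Gamma$ is a noetherian $F$-finite ring (Lemma \ref{lem:gammaconstbasic}(\ref{lem:gammaconstbasicffin})), $\widehat{X}^\Gamma$ is $F$-finite by Example \ref{ex:eftoverffin}, so $\cP(\widehat{X}^\Gamma)$ is open by (\ref{axiom:gammaopen}) (the $F$-pure locus of an $F$-finite ring is open, as used in the proof of Theorem \ref{thm:gammaconst}). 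As $\pi^\Gamma$ is a homeomorphism, $\cP(\widehat{X}) = \pi^\Gamma(\cP(\widehat{X}^\Gamma))$ is open in $\widehat{X}$, and then by the quotient-map argument of the previous paragraph the $F$-pure locus of $X$ is open. The main obstacle in writing this cleanly is making sure the ascent/descent of $F$-purity along $A \to \widehat{A}$ is correctly sourced — descent is immediate from (\ref{axiom:gammadescent}), but ascent genuinely uses that $A$ is a $G$-ring (so the fibers of $\widehat{X} \to X$ are geometrically regular, allowing one to reduce to the base-field-is-a-field case of \cite[Prop.\ 5.4]{Ma14}), and this is precisely where the $G$-ring hypothesis is consumed.
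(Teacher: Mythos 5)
Your overall strategy---complete $A$, transfer the $F$-pure locus along the projection $X\times_A\widehat{A}\to X$, apply the gamma construction over $\widehat{A}$ to get openness upstairs, and then descend openness---is exactly the paper's proof. Two of your justifications, however, are wrong as stated. First, the morphism $g\colon X\times_A\widehat{A}\to X$ is \emph{not} locally of finite type: $\Spec\widehat{A}\to\Spec A$ is essentially never of finite type, so you cannot deduce universal openness that way. What is true, and what the paper uses, is that $g$ is quasi-compact and faithfully flat, hence submersive, so a subset of $X$ is open if and only if its preimage is open \cite[Cor.\ 2.3.12]{EGAIV2}; the conclusion you want survives, but for a different reason.

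Second, the closed fibers of the local homomorphisms $\cO_{X,x}\to\cO_{X\times_A\widehat{A},\widehat{x}}$ are in general \emph{not} fields, nor even zero-dimensional: they are localizations of base changes of the formal fibers of $A$, which can have positive dimension. So the hypothesis of $(\ref{axiom:gammadescent})$ is not met in the form you invoke it; descent of $F$-purity instead follows from purity of faithfully flat maps \cite[Prop.\ 5.13]{HR76}, which needs no hypothesis on the fibers. Similarly, \cite[Prop.\ 5.4]{Ma14} is a statement about the gamma construction itself (ascent from $B$ to $B^\Gamma$), not about ascent along regular homomorphisms, so it is the wrong tool for the map $A\to\widehat{A}$. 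The ascent you actually need is that $F$-purity passes up along flat local homomorphisms with geometrically regular fibers---this is where the $G$-ring hypothesis is consumed---and the paper sources it to \cite[Props.\ 2.4(4) and 2.4(6)]{Has10}. With these two repairs your argument coincides with the paper's.
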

\par Recall that a noetherian ring $R$ is a
\textsl{$G$-ring} if, for every prime ideal $\fp \subseteq R$, the completion
homomorphism $R_\fp \to \widehat{R_\fp}$ is regular; see \cite[pp.\
255--256]{Mat89} for the definitions of $G$-rings and of regular homomorphisms.
Excellent rings are $G$-rings by definition; see \cite[Def.\ on p.\ 260]{Mat89}.
\begin{proof}
  Let $A \to \widehat{A}$ be the completion of $A$ at $\fm$, and let $\Lambda$
  be a $p$-basis for $\widehat{A}/\fm\widehat{A}$ as in Construction
  \ref{constr:gamma}.
  For every cofinite subset $\Gamma \subseteq \Lambda$, consider the
  commutative diagram
  \[
    \begin{tikzcd}
      X \times_A \widehat{A}^\Gamma \rar{\pi^\Gamma}\dar & X \times_A
      \widehat{A} \rar{\pi}\dar & X\dar\\
      \Spec \widehat{A}^\Gamma \rar & \Spec \widehat{A} \rar & \Spec A
    \end{tikzcd}
  \]
  where the squares are cartesian.
  By Theorem \ref{thm:gammaconst}, there exists a cofinite subset $\Gamma
  \subseteq \Lambda$ such that $\pi^\Gamma$ is a homeomorphism identifying
  $F$-pure loci.
  Since $X \times_A \widehat{A}^\Gamma$ is $F$-finite, the $F$-pure locus in
  $X \times_A \widehat{A}$ is therefore open by the fact that
  $(\ref{axiom:gammaopen})$ holds for $F$-purity (see the proof of Theorem
  \ref{thm:gammaconst}$(\ref{thm:gammaconstreg})$).
  \par Now let $x \in X \times_A \widehat{A}$.
  Since $A \to \widehat{A}$ is a regular homomorphism, the morphism $\pi$ is
  also regular by base change \cite[Prop.\ 6.8.3$(iii)$]{EGAIV2}.
  Thus, $\cO_{X \times_A \widehat{A},x}$ is $F$-pure if and only if
  $\cO_{X,\pi(x)}$ is $F$-pure by \cite[Prop.\ 5.13]{HR76} and \cite[Props.\
  2.4(4) and 2.4(6)]{Has10}.
  Denoting the $F$-pure locus in $X$ by $W$, we see that $\pi^{-1}(W)$ is
  the $F$-pure locus in $X \times_A \widehat{A}$.
  Since $\pi^{-1}(W)$ is open and $\pi$ is quasi-compact and faithfully flat by
  base change, the $F$-pure locus $W \subseteq X$ is open by \cite[Cor.\
  2.3.12]{EGAIV2}.
\end{proof}
\begin{remark}
  Although Lemma \ref{lem:finjopen} shows that the $F$-injective locus is open
  under $F$-finiteness hypotheses, and the gamma construction (Theorem
  \ref{thm:gammaconst}) implies that the $F$-injective locus is open for schemes
  essentially of finite type over \emph{complete} local rings,
  the fact that the $F$-injective locus is open under the hypotheses of
  Corollary \ref{cor:fpurelocusopen} is a recent result due to Rankeya Datta and
  the author \cite[Thm.\ B]{DM}.
\end{remark}
\subsection{Application to the minimal model program over imperfect fields}
\label{sect:tanakaapps}
With notation as in Construction \ref{constr:gamma},
let $\{X_i\}$ be a finite set of schemes essentially of finite type over $A$.
For each $i$, Theorem \ref{thm:gammaconst} produces a cofinite subset
$\Gamma_0^i \subseteq \Lambda$ such that properties of $X_i$ are inherited by
$X_i^\Gamma$ for every $\Gamma \subseteq \Gamma_0^i$ cofinite in $\Lambda$.
Setting $\Gamma_0 = \bigcap_i \Gamma_0^i$ gives a cofinite subset of $\Lambda$
which works for every scheme in the set $\{X_i\}$ at once.
We illustrate this strategy with the following:
\begin{corollary}\label{cor:gammaconst}
  Let $(X,\Delta)$ be a pair consisting of a normal variety $X$ over a field $k$
  of characteristic $p > 0$ and an $\RR$-Weil divisor $\Delta$ on $X$.
  Fix notation as in Construction \ref{constr:gamma}, where we set $A = k$.
  \begin{enumerate}[label=$(\roman*)$,ref=\roman*]
    \item If $X$ is a regular variety and $\Delta$ has simple normal crossing
      support, then there exists a cofinite subset $\Gamma_0 \subseteq \Lambda$
      such that $X^\Gamma$ is a regular variety and
      $(\pi^\Gamma)^*\Delta$ has simple normal crossing support for every
      cofinite subset $\Gamma \subseteq \Gamma_0$.\label{cor:gammaconstsnc}
    \item If $\dim X \le 3$ and $(X,\Delta)$ is klt (resp.\ log canonical), then
      there exists a cofinite subset $\Gamma_0 \subseteq \Lambda$
      such that $(X^\Gamma,(\pi^\Gamma)^*\Delta)$ is klt (resp.\ log
      canonical) for every cofinite subset $\Gamma \subseteq
      \Gamma_0$.\label{cor:gammaconstklt}
  \end{enumerate}
\end{corollary}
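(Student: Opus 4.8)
The plan is to prove part~$(\ref{cor:gammaconstsnc})$ directly from Theorem~\ref{thm:gammaconst}$(\ref{thm:gammaconstreg})$ applied to a suitable finite family of schemes, and then to deduce part~$(\ref{cor:gammaconstklt})$ by transporting a log resolution through the gamma construction.

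\emph{Part $(\ref{cor:gammaconstsnc})$.} Write $\Delta = \sum_{i=1}^s a_i D_i$ with the $D_i$ distinct prime divisors. Because $X$ is regular and $\Delta$ has simple normal crossing support, for every $I \subseteq \{1,\dots,s\}$ the scheme-theoretic intersection $D_I \coloneqq \bigcap_{i\in I} D_i$ is either empty or regular and equidimensional of codimension $\lvert I\rvert$ in $X$, and conversely these conditions characterize the simple normal crossing property on the regular scheme $X$. I would apply Theorem~\ref{thm:gammaconst}$(\ref{thm:gammaconstreg})$ to the finite collection of regular schemes $\{X\}\cup\{D_I : I\subseteq\{1,\dots,s\},\ D_I\neq\emptyset\}$ and intersect the finitely many resulting cofinite subsets to obtain one cofinite $\Gamma_0\subseteq\Lambda$ such that, for every cofinite $\Gamma\subseteq\Gamma_0$, the schemes $X^\Gamma$ and all the $D_I^\Gamma \coloneqq D_I\times_k k^\Gamma$ are regular. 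Since $\pi^\Gamma$ is flat by Lemma~\ref{lem:gammaconstbasic}$(\ref{lem:gammaconstbasicfflat})$, forming the scheme-theoretic preimage under $\pi^\Gamma$ commutes with taking sums of ideal sheaves, so $D_I^\Gamma=\bigcap_{i\in I}D_i^\Gamma$ inside $X^\Gamma$. In particular each $D_i^\Gamma$ is a regular---hence reduced---effective Cartier divisor whose support $(\pi^\Gamma)^{-1}(D_i)$ is irreducible because $\pi^\Gamma$ is a homeomorphism, so $D_i^\Gamma=(\pi^\Gamma)^*D_i$ is prime; and since $\pi^\Gamma$ preserves the dimension of local rings, each nonempty $D_I^\Gamma$ is regular of codimension $\lvert I\rvert$. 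Therefore $(\pi^\Gamma)^*\Delta=\sum_i a_i D_i^\Gamma$ has simple normal crossing support.

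\emph{Part $(\ref{cor:gammaconstklt})$.} Since $\dim X\le 3$ over an arbitrary field, the pair $(X,\Delta)$ admits a log resolution $f\colon Y\to X$ (embedded resolution of excellent surfaces, respectively of threefolds over an arbitrary field in the sense of Cossart--Piltant), so that $Y$ is a regular variety, the reduced divisor $E\coloneqq \operatorname{Exc}(f)\cup\Supp(f^{-1}_*\Delta)$ has simple normal crossing support, $K_X+\Delta$ is $\RR$-Cartier, and $K_Y=f^*(K_X+\Delta)+\sum_F a(F;X,\Delta)\,F$ over the prime components $F$ of $E$, with $a(F;X,\Delta)>-1$ for all $F$ in the klt case (resp.\ $\ge -1$ in the log canonical case). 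I would apply part~$(\ref{cor:gammaconstsnc})$ to $(Y,E)$ and Theorem~\ref{thm:gammaconst}$(\ref{thm:gammaconstreg})$ (normality) to $X$, and intersect the resulting cofinite subsets to get $\Gamma_0$; then for every cofinite $\Gamma\subseteq\Gamma_0$, the scheme $X^\Gamma$ is a normal variety, $Y^\Gamma$ is a regular variety, $f^\Gamma\colon Y^\Gamma\to X^\Gamma$ is proper by base change and birational (it is an isomorphism over the preimage of the dense open locus where $f$ is one, which remains dense open since $\pi^\Gamma$ is a homeomorphism), and $(\pi_Y^\Gamma)^*E$---which contains the exceptional locus of $f^\Gamma$ together with the strict transform of $(\pi^\Gamma)^*\Delta$---has simple normal crossing support, so $f^\Gamma$ is a log resolution of $(X^\Gamma,(\pi^\Gamma)^*\Delta)$. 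Granting that $\pi^\Gamma$ and $\pi_Y^\Gamma$ are crepant (see below), pulling the displayed identity back along $\pi_Y^\Gamma$ and using, as in part~$(\ref{cor:gammaconstsnc})$, that prime components pull back to prime components with unchanged coefficients, yields $K_{Y^\Gamma}=(f^\Gamma)^*\bigl(K_{X^\Gamma}+(\pi^\Gamma)^*\Delta\bigr)+\sum_F a(F;X,\Delta)\,F^\Gamma$. Hence $K_{X^\Gamma}+(\pi^\Gamma)^*\Delta$ is $\RR$-Cartier and the discrepancies of $(X^\Gamma,(\pi^\Gamma)^*\Delta)$ over this log resolution are exactly the numbers $a(F;X,\Delta)$, so the pair is klt (resp.\ log canonical).

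\emph{The main obstacle} is the crepancy input used above: that $(\pi^\Gamma)^*\omega_X^\bullet\cong\omega_{X^\Gamma}^\bullet$, equivalently $(\pi^\Gamma)^*K_X=K_{X^\Gamma}$ as Weil divisor classes, and likewise over $Y$. By Lemma~\ref{lem:gammaconstbasic}$(\ref{lem:gammaconstbasicfflat})$ the morphism $\pi^\Gamma$ is flat with zero-dimensional local complete intersection---in particular Gorenstein---fibers, so its relative dualizing complex $(\pi^\Gamma)^!\cO_X$ is an invertible sheaf placed in degree $0$; and by flat base change for relative dualizing complexes along $\Spec k^\Gamma\to\Spec k$ it is the pullback of the relative dualizing module $\omega_{k^\Gamma/k}$ of the field extension. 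I would show $\omega_{k^\Gamma/k}$ is a trivial invertible $k^\Gamma$-module by exhausting $k\subseteq k^\Gamma$ by its finite (necessarily purely inseparable) subextensions $k\subseteq k_j$: each $\Hom_k(k_j,k)$ is a rank-one free $k_j$-module since $k_j$ is a Gorenstein $k$-algebra, and passing to the filtered colimit---via continuity of Grothendieck duality along the cofiltered limit $\Spec k^\Gamma=\varprojlim_j\Spec k_j$ with finite flat transition maps---produces a trivial invertible $k^\Gamma$-module. Making this duality-theoretic step precise, including its compatibility with the normalizations of dualizing complexes needed to read off canonical divisors, is where the real work lies; the remainder is bookkeeping with the flatness and homeomorphism properties of $\pi^\Gamma$ already recorded in Lemma~\ref{lem:gammaconstbasic}.
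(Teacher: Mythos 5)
Your overall route is the same as the paper's: for part~$(\ref{cor:gammaconstsnc})$ the paper likewise applies Theorem~\ref{thm:gammaconst} to $X$ together with all intersections of the components $D_i$ and intersects the finitely many resulting cofinite subsets, and for part~$(\ref{cor:gammaconstklt})$ it likewise applies $(\ref{cor:gammaconstsnc})$ to a log resolution of $(X,\Delta)$ (this is where $\dim X\le 3$ enters, via resolution for excellent surfaces and threefolds) while simultaneously arranging that $X^\Gamma$ is normal, and then reads off the unchanged discrepancies. Your expanded verification of the simple normal crossing condition and of birationality of $f^\Gamma$ is fine bookkeeping.

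The one place you diverge is the ``main obstacle'' you flag, and there your proposed treatment both overshoots and has a technical flaw. The needed input is simply that the formation of canonical divisors (normalized dualizing complexes) is compatible with ground field extension, so that $K_{X^\Gamma}=(\pi^\Gamma)^*K_X$ and $K_{Y^\Gamma}=(\pi_Y^\Gamma)^*K_Y$; the paper treats this as a known citable fact \cite[Cor.\ V.3.4$(a)$]{Har66} (it invokes exactly this statement in \S\ref{sect:tanakaapps}), so no new duality-theoretic work is required. Your sketch, as written, cannot be carried out literally: $\pi^\Gamma$ is not (essentially) of finite type, so the exceptional pullback $(\pi^\Gamma)^!$ and a ``relative dualizing complex'' for $\pi^\Gamma$ are not available in the classical framework, and the colimit computation of $\omega_{k^\Gamma/k}$ is beside the point --- any invertible module over the field $k^\Gamma$ is trivial, so triviality is not the issue. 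What actually has to be known is that $(\pi^\Gamma)^*\omega_X^\bullet$ is again a dualizing complex on $X^\Gamma$ (equivalently that $(\pi^\Gamma)^*K_X$ is a canonical divisor), and that is precisely the content of the cited compatibility with ground field extensions; once you quote it, your argument closes and agrees with the paper's.
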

\begin{proof}
  For $(\ref{cor:gammaconstsnc})$, first write $\Delta = \sum a_iD_i$, where
  $a_i \in \RR$ and $D_i$ are prime divisors.
  By Theorem \ref{thm:gammaconst} applied to the regular locus of $X$ and of
  every set of intersections of the $D_i$, we see that there exists a cofinite
  subset $\Gamma_0 \subseteq \Lambda$ such that $X^\Gamma$ is a regular variety
  and $(\pi^\Gamma)^*\Delta = \sum a_i(\pi^\Gamma)^*D_i$ has simple normal
  crossing support for every $\Gamma \subseteq \Gamma_0$ cofinite in $\Lambda$.
  $(\ref{cor:gammaconstklt})$ then follows by applying
  $(\ref{cor:gammaconstsnc})$ to a log resolution of $(X,\Delta)$ while
  simultaneously choosing $\Gamma_0$ such that $X^\Gamma$ is normal for every
  cofinite subset $\Gamma \subseteq \Gamma_0$.
\end{proof}
\par Corollary \ref{cor:gammaconst}$(\ref{cor:gammaconstklt})$ easily provides
another method for proving the reduction step in \cite[Thm.\ 3.8]{Tan18}.
It can also be used to prove the more subtle reduction step in the
following result of Tanaka.
\begin{citedthm}[{\cite[Thm.\ 4.12]{Tan}}]
  Let $k$ be a field of characteristic $p > 0$.
  Let $(X,\Delta)$ be a log canonical surface over $k$, where $\Delta$ is a
  $\QQ$-Weil divisor.
  Let $f\colon X \to S$ be a projective morphism to a separated scheme $S$ of
  finite type over $k$.
  If $K_X+\Delta$ is $f$-nef, then $K_X+\Delta$ is $f$-semi-ample.
\end{citedthm}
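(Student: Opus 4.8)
The plan is to use the gamma construction (Corollary~\ref{cor:gammaconst}) to reduce to the case where the ground field $k$ is $F$-finite, and then to invoke the Frobenius-based surface techniques of \cite{Tan18,Tan}, which are available only in that setting. Passing instead to a perfect closure of $k$ would also make $k$ $F$-finite but could destroy the log canonicity of $(X,\Delta)$, so the essential input is that the gamma construction does not: a log canonical surface is a normal surface, hence a variety of dimension at most $3$, so Corollary~\ref{cor:gammaconst}$(\ref{cor:gammaconstklt})$ (with $A = k$ in Construction~\ref{constr:gamma}) produces a cofinite subset $\Gamma \subseteq \Lambda$ with $k^\Gamma$ $F$-finite such that $(X^\Gamma,(\pi^\Gamma)^*\Delta)$ is again a log canonical surface, where $X^\Gamma = X \times_k k^\Gamma$ and $\pi^\Gamma\colon X^\Gamma \to X$ is the projection.

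Next I would transport the relative setup along the base change. Setting $S^\Gamma \coloneqq S \times_k k^\Gamma$ and letting $f^\Gamma\colon X^\Gamma \to S^\Gamma$ be the base change of $f$, the scheme $S^\Gamma$ is separated of finite type over $k^\Gamma$, the morphism $f^\Gamma$ is projective, and the square with vertices $X^\Gamma, X, S^\Gamma, S$ is cartesian. Since $\pi^\Gamma$ is flat with local complete intersection (hence Gorenstein) fibers of dimension zero (Lemma~\ref{lem:gammaconstbasic}$(\ref{lem:gammaconstbasicfflat})$) and $\Spec k^\Gamma$ is the spectrum of a field, $\omega_{X^\Gamma/X}$ is the pullback of the invertible---hence trivial---sheaf $\omega_{k^\Gamma/k}$, so $K_{X^\Gamma} = (\pi^\Gamma)^*K_X$ and therefore $K_{X^\Gamma}+(\pi^\Gamma)^*\Delta = (\pi^\Gamma)^*(K_X+\Delta)$. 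Because $\pi^\Gamma$ is a homeomorphism, every $f^\Gamma$-contracted curve in $X^\Gamma$ maps onto an $f$-contracted curve in $X$ along a finite map, so $K_{X^\Gamma}+(\pi^\Gamma)^*\Delta$ is $f^\Gamma$-nef; thus all hypotheses of the theorem hold for $(X^\Gamma,(\pi^\Gamma)^*\Delta)$ and $f^\Gamma$ over the $F$-finite field $k^\Gamma$.

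Over $k^\Gamma$ one has dualizing complexes (Theorem~\ref{thm:ffiniteaffine}), Grothendieck duality for the Frobenius, the minimal model program for surfaces, and the requisite vanishing theorems, so the $F$-finite case of the theorem is carried out as in \cite[\S4]{Tan}: some positive multiple $m(K_{X^\Gamma}+(\pi^\Gamma)^*\Delta) = (\pi^\Gamma)^*\bigl(m(K_X+\Delta)\bigr)$ is Cartier and $f^\Gamma$-globally generated. To descend this conclusion I would use that $\Spec k^\Gamma \to \Spec k$ is faithfully flat, hence so are $\pi^\Gamma$ and $S^\Gamma \to S$: the formation of $f_*\cO_X\bigl(m(K_X+\Delta)\bigr)$ commutes with the flat base change $S^\Gamma \to S$, so the relative evaluation morphism for $m(K_X+\Delta)$ pulls back along $\pi^\Gamma$ to the corresponding morphism for $m(K_{X^\Gamma}+(\pi^\Gamma)^*\Delta)$; since both surjectivity of a morphism of coherent sheaves and invertibility of a coherent sheaf descend along the faithfully flat morphism $\pi^\Gamma$, I would conclude that $m(K_X+\Delta)$ is Cartier and $f$-globally generated, i.e., that $K_X+\Delta$ is $f$-semi-ample.

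The hard part is the reduction step---enlarging $k$ to an $F$-finite field while keeping $(X,\Delta)$ log canonical---which is exactly what Corollary~\ref{cor:gammaconst}$(\ref{cor:gammaconstklt})$, built on Theorem~\ref{thm:gammaconst}, supplies; the compatibility with the relative, non-proper base $S$ and the faithfully flat descent of semi-ampleness are then routine, and the remaining $F$-finite core of the argument is Tanaka's and is not reproved here.
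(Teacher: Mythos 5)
Your overall strategy---reduce to an $F$-finite ground field via the gamma construction and then cite Tanaka's surface argument as a black box---is the same shape as the paper's, and your descent of $f$-semi-ampleness along the faithfully flat base change is the same routine step the paper invokes. But there is a genuine gap in the \emph{target} of your reduction. The $F$-finite core of Tanaka's proof is not available over an arbitrary $F$-finite field: it rests on \cite[Thm.\ 1]{Tan17}, which requires the ground field to be $F$-finite \emph{and to contain an infinite perfect field}. Applying Corollary \ref{cor:gammaconst}$(\ref{cor:gammaconstklt})$ directly to $k$ produces a field $k^\Gamma$ that is $F$-finite but need not contain any infinite perfect subfield (for $k = \FF_p$ the construction returns $\FF_p$ itself), so the result you invoke does not apply to $(X^\Gamma,(\pi^\Gamma)^*\Delta)$ over $k^\Gamma$.

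The paper's reduction therefore proceeds through an intermediate step you omit: it first passes to $k(x^{1/p^\infty})$, which contains the infinite perfect field $\FF_p(x^{1/p^\infty})$, and only then applies the gamma construction to that field. This creates a second obligation your proposal never discharges: one must check that $(X,\Delta)$ remains a log canonical surface after the base change $k \subseteq k(x^{1/p^\infty})$, and this extension is \emph{not} an instance of the gamma construction, so Corollary \ref{cor:gammaconst} says nothing about it. The paper handles it by fixing a log resolution, writing $k(x^{1/p^\infty}) = \bigcup_e k(x^{1/p^e})$ with each $k(x^{1/p^e}) \simeq k(x)$, and using that integrality, normality, and regularity pass to limits of schemes with affine flat transition morphisms, together with the observation that base change to $k(x)$ preserves these properties because the relevant schemes are covered by localizations of $U_j \times_k \AA^1_k$. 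Both of these missing ingredients are needed before the citation of Tanaka's $F$-finite argument is legitimate; the rest of your write-up (triviality of $\omega_{k^\Gamma/k}$, preservation of $f$-nefness, and flat descent of global generation) is fine.
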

\par The first step of the proof in \cite{Tan} is to reduce to the case where $k$ is
$F$-finite and contains an infinite perfect field in order to apply \cite[Thm.\
1]{Tan17}.
We illustrate how one can use the gamma construction (Theorem
\ref{thm:gammaconst}) to make this reduction.
\begin{proof}[Proof of reduction]
  Note that the formation of $K_X$ is compatible with ground field
  extensions \cite[Cor.\ V.3.4$(a)$]{Har66}, and that $f$-nefness is preserved
  under base change since $f$-ampleness is.
  By flat base change and the fact that field extensions are faithfully
  flat, $f$-semi-ampleness can be checked after a ground field extension.
  Since $k(x^{1/p^\infty})$ contains the infinite perfect field
  $\FF_p(x^{1/p^\infty})$ and applying the gamma construction to
  $k(x^{1/p^\infty})$ results in an $F$-finite field (Construction
  \ref{constr:gamma} and Lemma
  \ref{lem:gammaconstbasic}$(\ref{lem:gammaconstbasicffin})$), it therefore
  suffices to show that for some choice of $\Gamma$, the base change of
  $(X,\Delta)$ under the sequence of ground field extensions
  \[
    k \subseteq k(x^{1/p^\infty}) \subseteq
    \bigl(k(x^{1/p^\infty})\bigr)^{\Gamma}
  \]
  is a log canonical surface.
  Moreover, Corollary \ref{cor:gammaconst}$(\ref{cor:gammaconstklt})$ implies it
  suffices to prove that the base change of $(X,\Delta)$ to $k(x^{1/p^\infty})$
  is a log canonical surface.
  \par Fix a log resolution $\mu\colon Y \to X$ for $(X,\Delta)$, and write $K_Y
  - \mu^*(K_X+\Delta) = \sum_i a_iE_i$.
  Note that $k(x^{1/p^\infty}) = \bigcup_e k(x^{1/p^e})$, and that each
  field $k(x^{1/p^e})$ is isomorphic to $k(x)$.
  Since integrality, normality, and regularity are preserved under limits of
  schemes with affine and flat transition morphisms \cite[Cor.\ 5.13.4 and
  Prop.\ 5.13.7]{EGAIV2}, it
  suffices to show that $X \times_k k(x)$ is a normal variety, $Y
  \times_k k(x)$ is a regular variety, and each $E_i \times_k k(x)$ is a
  regular variety such that every intersection of the $E_i \times_k k(x)$'s is
  regular.
  This follows for $X \times_k k(x)$, since if $\bigcup_j U_j$ is an affine open
  covering of $X$, then $X \times_k k(x)$ is covered by affine open subsets that
  are localizations of the normal varieties $U_j \times_k \AA^1_k$, which
  pairwise intersect.
  A similar argument works for $Y$, the $E_i$'s, and the intersections of the
  $E_i$'s.
\end{proof}

\section{The ampleness criterion of de Fernex--K\"uronya--Lazarsfeld}
\label{sect:dfkl}
We now come to our first application of the gamma construction, Theorem
\ref{thm:dfkl41}.
Let $X$ be a projective variety of dimension $n > 0$.
For every Cartier divisor $L$ on $X$, we have
\[
  h^i\bigl(X,\cO_X(mL)\bigr) = O(m^n)
\]
for every $i$; see \cite[Ex.\ 1.2.20]{Laz04a}.
In \cite[Thm.\ 4.1]{dFKL07}, de Fernex, K\"uronya, and Lazarsfeld asked when the
higher cohomology groups have submaximal growth, i.e., when $h^i(X,\cO_X(mL)) =
o(m^n)$.
They proved that over the complex numbers, ample Cartier
divisors $L$ are characterized by having submaximal growth of higher cohomology
groups for small perturbations of $L$.
The content of Theorem \ref{thm:dfkl41} is that their characterization holds
for projective varieties over arbitrary fields.
Note that one can have $\widehat{h}^i(X,L) = 0$ for all $i > 0$ without $L$
being ample, or even pseudoeffective, hence the perturbation
by $A$ is necessary; see \cite[\S3.1]{Kur06} or \cite[Ex.\ 4.4]{ELMNP05}.
\par After reviewing some background material on asymptotic cohomological
functions in \S\ref{sect:burgosgil} following
\citeleft\citen{Kur06}\citemid
\S2\citepunct\citen{BGGJKM}\citemid \S3\citeright, we will prove an analogue of
a lemma on base loci \cite[Prop.\ 3.1]{dFKL07} in \S\ref{sect:lemonbaseloci}.
This latter subsection is where asymptotic test ideals are used.
Finally, we prove Theorem \ref{thm:dfkl41} in \S\ref{sect:dfkl41proof} using
the gamma construction and alterations.
\medskip
\par Before getting into the details of the proof, we briefly describe the main
difficulties in adapting the proof of \cite[Thm.\ 4.1]{dFKL07} to positive
characteristic.
First, the proof of \cite[Prop.\ 3.1]{dFKL07} requires resolutions of
singularities, and because of this, we can only prove a version of this lemma
(Proposition \ref{prop:dfkl31}) under the additional hypothesis that a specific
pair has (a weak version of) a log resolution.
This weaker result suffices for Theorem \ref{thm:dfkl41} since we can reduce to
this situation by taking the Stein factorization of an alteration.
Second, \cite{dFKL07} uses the assumption that the ground field is uncountable
to choose countably many very general divisors that facilitate an inductive
argument.
We reduce to the setting where the ground field is uncountable
by adjoining uncountably many indeterminates to our
ground field and then applying the gamma construction (Theorem
\ref{thm:gammaconstintro}) to reduce to the $F$-finite case; see Lemma
\ref{lem:dfklfieldred}.
\subsection{Background on asymptotic cohomological functions}
\label{sect:burgosgil}
We first review K\"uronya's asymptotic cohomological functions
with suitable modifications to work over arbitrary fields,
following \citeleft\citen{Kur06}\citemid
\S2\citepunct\citen{BGGJKM}\citemid \S3\citeright.
Asymptotic cohomological functions are defined as follows:
\begin{citeddef}[{\cite[Def.\ 3.4.6]{BGGJKM}}]\label{def:asymptoticcoh}
  Let $X$ be a projective scheme of dimension $n$ over a field.
  For every integer $i \ge 0$, the \textsl{$i$th asymptotic cohomological
  function} on $X$ is the function defined by setting
  \[
    \widehat{h}^i(X,D) \coloneqq \limsup_{m \to \infty}
    \frac{h^i\bigl(X,\cO_X\bigl(\lceil mD \rceil\bigr)\bigr)}{m^n/n!}
  \]
  for an $\RR$-Cartier divisor $D$ on $X$,
  where the round-up is defined by writing $D = \sum_i a_iD_i$ as an
  $\RR$-linear combination of Cartier divisors and setting $\lceil mD \rceil
  \coloneqq \sum_i \lceil ma_i \rceil D_i$; see \cite[Def.\ 3.4.1]{BGGJKM}.
  The numbers $\widehat{h}^i(X,D)$ only
  depend on the $\RR$-linear equivalence class of $D$ and are independent of
  the decomposition $D = \sum_i a_iD_i$ by \cite[Rem.\ 3.4.5]{BGGJKM}, hence
  $\widehat{h}^i(X,-)$ gives rise to well-defined functions $\Div_\RR(X) \to
  \RR$ and $\Div_\RR(X)/\mathord{\sim_\RR} \to \RR$.
\end{citeddef}
\par A key property of asymptotic cohomological functions is the following:
\begin{citedprop}[{\cite[Prop.\ 3.4.8]{BGGJKM}}]\label{prop:nahomogcont}
  Let $X$ be a projective scheme of dimension $n$ over a field.
  For every $i \ge 0$, the function $\widehat{h}^i(X,-)$ on
  $\Div_\RR(X)$ is homogeneous of degree $n$, and is
  continuous on every finite-dimensional $\RR$-subspace of
  $\Div_\RR(X)$ with respect to every norm.
\end{citedprop}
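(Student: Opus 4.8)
The plan is to reduce to the case of an algebraically closed ground field and then run the argument of K\"uronya \cite{Kur06} for smooth complex varieties, whose one nontrivial input is a uniform bound on how much higher cohomology can move under a bounded change of the divisor. Since $h^j(X,\cO_X(D)) = h^j(X_{\overline{k}},\cO_{X_{\overline{k}}}(D_{\overline{k}}))$ for every integral $D$ by flat base change, and since the formation of round-ups and of $\widehat{h}^i$ commutes with the faithfully flat extension $k \subseteq \overline{k}$, I may assume $k = \overline{k}$; in particular every very ample invertible sheaf on $X$ then has a global section that is a nonzerodivisor. The technical lemma I need, for a fixed finitely generated subgroup $\Lambda_0 \subseteq \Div(X)$ with a chosen basis giving a norm $\lVert \cdot \rVert$, is: $(a)$ there is a constant $C$ with $h^j(X,\cO_X(D)) \le C(1+\lVert D \rVert)^n$ for all $j \ge 0$ and all $D \in \Lambda_0$ (this is \cite[Ex.\ 1.2.20]{Laz04a}); and $(b)$ for each fixed $E \in \Lambda_0$ there is a constant $C_E$ with
\[
  \bigl\lvert h^j(X,\cO_X(D+E)) - h^j(X,\cO_X(D)) \bigr\rvert \le C_E\,(1+\lVert D \rVert)^{n-1}
\]
for all $j \ge 0$ and all $D \in \Lambda_0$. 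I would prove $(b)$ by induction on $n = \dim X$, the case $n = 0$ being clear: after writing $E$ as a difference of two effective very ample divisors it suffices to treat $E$ effective and very ample, and then a section of $\cO_X(E)$ that is a nonzerodivisor cuts out a closed subscheme $Y \subseteq X$ with $\dim Y \le n-1$; the exact sequence $0 \to \cO_X(D) \to \cO_X(D+E) \to \cO_Y((D+E)|_Y) \to 0$ bounds the left-hand side of the displayed inequality by $h^j(Y,\cO_Y((D+E)|_Y)) + h^{j-1}(Y,\cO_Y((D+E)|_Y))$, which is $O((1+\lVert D \rVert)^{n-1})$ by $(a)$ applied to the $(n-1)$-dimensional $Y$.

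Granting this, homogeneity is a bookkeeping exercise. For a Cartier divisor $D$ and an integer $a \ge 1$, writing $k = am'+j$ with $0 \le j < a$ and applying $(b)$ to $kD = am'D + jD$ shows $\limsup_{k\to\infty} h^i(\cO_X(kD))/k^n = \limsup_{m'\to\infty} h^i(\cO_X(am'D))/(am')^n$, whence $\widehat{h}^i(X,aD) = a^n\,\widehat{h}^i(X,D)$. For a positive rational $c = p/q$ one argues in the same way: writing $m = qm'+j$ one has $\lceil mc \rceil D = m'pD + E_j$ with $E_j = \lceil jp/q\rceil D$ ranging over a finite set, and $(b)$ gives $\limsup_{m\to\infty} h^i(\cO_X(\lceil mc\rceil D))/m^n = q^{-n}\limsup_{m'\to\infty} h^i(\cO_X(m'pD))/(m')^n$, which by the integer case equals $c^n\cdot(n!)^{-1}\widehat{h}^i(X,D)$; hence $\widehat{h}^i(X,cD) = c^n\widehat{h}^i(X,D)$. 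Replacing $D$ by $\tfrac1N D_0$ extends this to $\widehat{h}^i(X,cD) = c^n\widehat{h}^i(X,D)$ for every $\QQ$-Cartier divisor $D$ and every $c \in \QQ_{>0}$.

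For continuity, fix a finite-dimensional subspace $V \subseteq \Div_\RR(X)$; enlarging it, I may assume $V$ is spanned by Cartier divisors $D_1,\dots,D_r$, which I take as the basis of the relevant $\Lambda_0$. Given $D = \sum a_l D_l$ and $D' = \sum b_l D_l$ in $V$ with $\lVert D \rVert, \lVert D' \rVert \le M$, the divisors $\lceil mD \rceil$ and $\lceil mD'\rceil$ differ by an integral divisor whose $l$-th coordinate has absolute value at most $m\lvert a_l - b_l\rvert + 1$, so one can pass from one to the other by adding generators $\pm D_l$ one at a time, in at most $N \le m\lVert D - D'\rVert + r$ steps, passing only through divisors of norm $O(mM)$. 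Applying $(b)$ at each step then gives $\lvert h^i(\cO_X(\lceil mD\rceil)) - h^i(\cO_X(\lceil mD'\rceil))\rvert \le C\,(m\lVert D - D'\rVert + r)\,(mM)^{n-1}$; dividing by $m^n/n!$ and letting $m \to \infty$ yields $\lvert \widehat{h}^i(X,D) - \widehat{h}^i(X,D')\rvert \le C'\,M^{n-1}\lVert D - D'\rVert$, so $\widehat{h}^i(X,-)$ is locally Lipschitz, hence continuous, on $V$, and with respect to every norm since all norms on $V$ are equivalent. Finally, approximating an arbitrary $D \in V$ by $\QQ$-Cartier divisors in $V$ and an arbitrary $c \in \RR_{>0}$ by rationals, and invoking continuity together with the rational homogeneity already proved, upgrades homogeneity to $\widehat{h}^i(X,cD) = c^n\widehat{h}^i(X,D)$ for all $D \in \Div_\RR(X)$ and all $c \in \RR_{>0}$.

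The main obstacle is the perturbation bound $(b)$ and, more precisely, its use in the continuity step: one must arrange that the number of one-generator moves between $\lceil mD\rceil$ and $\lceil mD'\rceil$ is $O(m\lVert D - D'\rVert)$ while the intermediate divisors stay of norm $O(mM)$, so that after dividing by $m^n$ the two contributions combine to a clean $M^{n-1}\lVert D - D'\rVert$ bound with no leftover power of $m$. Everything else is formal once the hyperplane-section induction for $(a)$ and $(b)$ is in place.
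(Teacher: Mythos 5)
Your argument is correct, and it is essentially the proof of the result in the cited source: the paper itself states Proposition \ref{prop:nahomogcont} as a quoted theorem from \cite{BGGJKM} without reproving it, and the appendix of \cite{BGGJKM} (following \cite{Kur06}) establishes it exactly via your perturbation bound $(b)$ --- a hyperplane-section/long-exact-sequence induction giving $\lvert h^j(D+E)-h^j(D)\rvert = O\bigl((1+\lVert D\rVert)^{n-1}\bigr)$ --- followed by the same bookkeeping for homogeneity over $\ZZ$, then $\QQ$, then the Lipschitz estimate on a fixed lattice and passage to $\RR$ by density. The one point worth flagging is that your reduction to $\overline{k}$ (to guarantee a nonzerodivisor section of a very ample bundle) and your step-counting in the continuity argument are exactly the places where care is needed, and you have handled both.
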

\par Proposition \ref{prop:nahomogcont} shows that Definition \ref{def:asymptoticcoh}
is equivalent to K\"uronya's original definition in \cite{Kur06}, and 
allows us to prove that asymptotic cohomological functions behave
well with respect to generically finite morphisms.
\begin{proposition}[cf.\ {\cite[Prop.\ 2.9(1)]{Kur06}}]\label{prop:kur291}
  Let $f\colon Y \to X$ be a surjective morphism of projective
  varieties, and consider an $\RR$-Cartier divisor $D$ on $X$.
  Suppose $f$ is generically finite of degree $d$.
  Then, for every $i$, we have
  \[
    \widehat{h}^i(Y,f^*D) = d \cdot \widehat{h}^i(X,D).
  \]
\end{proposition}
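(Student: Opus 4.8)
The plan is to reduce to the case where $D$ is an integral Cartier divisor, compute $h^i(Y,\cO_Y(mf^*D))$ via the Leray spectral sequence for $f$ and the projection formula, and discard contributions that are $O(m^{n-1})$. For the reduction, note first that $\dim Y = \dim X = n$ since $f$ is surjective and generically finite, so by Proposition~\ref{prop:nahomogcont} both $D \mapsto \widehat{h}^i(Y,f^*D)$ (precomposing the linear map $f^*$ with $\widehat{h}^i(Y,-)$) and $D \mapsto d\cdot\widehat{h}^i(X,D)$ are homogeneous of degree $n$ on $\Div_\RR(X)$ and continuous on every finite-dimensional subspace. Writing $D = \sum_j a_jD_j$ with the $D_j$ Cartier, the asserted identity for $D$ follows from the identity for all $\QQ$-linear combinations of the $D_j$ by density and continuity on $\bigoplus_j\RR D_j$, and the latter follows from the case of integral Cartier divisors by clearing denominators via degree-$n$ homogeneity. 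So I may assume $D$ is integral Cartier, in which case $\lceil mf^*D\rceil = mf^*D$.

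Now let $U \subseteq X$ be a dense open over which $f$ is finite and put $Z = X \smallsetminus U$, a closed subset of dimension $< n$. By the projection formula $R^qf_*\bigl(\cO_Y(f^*mD)\bigr) \simeq R^qf_*\cO_Y \otimes \cO_X(mD)$, and since $f$ is affine over $U$, the coherent sheaf $R^qf_*\cO_Y$ is supported on $Z$ for $q > 0$; hence $h^p\bigl(X,R^qf_*\cO_Y\otimes\cO_X(mD)\bigr) = O(m^{n-1})$ for $q > 0$ by \cite[Ex.\ 1.2.20]{Laz04a} applied on the projective scheme $Z$. Feeding this into the Leray spectral sequence
\[
  E_2^{p,q} = H^p\bigl(X,R^qf_*\cO_Y\otimes\cO_X(mD)\bigr) \Longrightarrow H^{p+q}\bigl(Y,\cO_Y(f^*mD)\bigr),
\]
and using that every differential out of $E_r^{i,0}$ vanishes while every differential into it has source a term with $q > 0$, I get $\dim E_\infty^{i,0} = h^i\bigl(X,f_*\cO_Y\otimes\cO_X(mD)\bigr) + O(m^{n-1})$; as the remaining graded pieces $E_\infty^{p,q}$ of the abutment ($q > 0$, finitely many) are each $O(m^{n-1})$, this yields
\[
  h^i\bigl(Y,\cO_Y(f^*mD)\bigr) = h^i\bigl(X,f_*\cO_Y\otimes\cO_X(mD)\bigr) + O(m^{n-1}).
\]

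Finally I would compare $f_*\cO_Y$ with $\cO_X^{\oplus d}$: both are coherent, torsion-free of rank $d$ on the integral variety $X$, and both restrict to $\cO_U^{\oplus d}$. Embedding each into the constant sheaf $K(X)^{\oplus d}$ of rational sections compatibly with a fixed such trivialization over $U$, their intersection $\mathcal H$ is a coherent subsheaf of both agreeing with $\cO_U^{\oplus d}$ on $U$, so the quotients $f_*\cO_Y/\mathcal H$ and $\cO_X^{\oplus d}/\mathcal H$ are supported on $Z$; twisting the two short exact sequences by $\cO_X(mD)$ and invoking \cite[Ex.\ 1.2.20]{Laz04a} on $Z$ once more gives $h^i\bigl(X,f_*\cO_Y\otimes\cO_X(mD)\bigr) = d\cdot h^i\bigl(X,\cO_X(mD)\bigr) + O(m^{n-1})$. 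Combining with the previous display, dividing by $m^n/n!$ and taking $\limsup$ — the error term is $O(1/m)\to 0$ and the constant $d>0$ factors out — yields $\widehat{h}^i(Y,f^*D) = d\cdot\widehat{h}^i(X,D)$. The step I expect to require the most care is this last comparison: there is in general no map $\cO_X^{\oplus d}\to f_*\cO_Y$ that is generically an isomorphism (the global functions on $Y$ form only a finite extension of those on $X$, far smaller than $K(Y)$ when $n\ge 1$), so one cannot write down a single short exact sequence directly, and intersecting the two sheaves inside their common sheaf of rational sections is what makes it work; a secondary point is to check that the Leray spectral sequence really gives the displayed equality modulo $O(m^{n-1})$ rather than just an inequality.
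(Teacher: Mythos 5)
Your proof is correct, and your reduction to integral Cartier divisors (degree-$n$ homogeneity for $\QQ$-divisors, then continuity on the finite-dimensional span of the components of $D$, via Proposition \ref{prop:nahomogcont}) is exactly the paper's argument. The only divergence is in the integral case: the paper simply cites \cite[Prop.\ 2.9(1)]{Kur06}, asserting that K\"uronya's proof goes through over an arbitrary field, whereas you prove it from scratch by the standard argument --- Leray spectral sequence plus the projection formula to replace $h^i\bigl(Y,\cO_Y(mf^*D)\bigr)$ by $h^i\bigl(X,f_*\cO_Y\otimes\cO_X(mD)\bigr)$ up to $O(m^{n-1})$, and then comparison of $f_*\cO_Y$ with $\cO_X^{\oplus d}$ inside the constant sheaf $K(X)^{\oplus d}$; this is essentially the content of the cited proof, so what your version buys is self-containedness and an explicit check that no characteristic-zero or algebraically closed hypotheses are needed. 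One small imprecision: $f_*\cO_Y$ restricted to the locus $U$ where $f$ is finite is only generically free, not free on all of $U$, so you should shrink $U$ (equivalently enlarge $Z$) so that a chosen basis of the generic stalk $K(Y)\simeq K(X)^{\oplus d}$ identifies $f_*\cO_Y\rvert_U$ with $\cO_U^{\oplus d}$; this is harmless, since the argument only uses that the quotients are supported on a closed set of dimension at most $n-1$.
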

\begin{proof}
  The proof of \cite[Prop.\ 2.9(1)]{Kur06} works in our setting with the
  additional hypothesis that $D$ is a Cartier divisor.
  It therefore suffices to reduce to this case.
  If the statement holds for integral $D$, then it also
  holds for $D \in \Div_\QQ(X)$ by homogeneity of 
  $\widehat{h}^i$ (Proposition \ref{prop:nahomogcont}).
  Moreover, the subspace of $\Div_\RR(X)$ spanned by the Cartier divisors
  appearing in $D$ is finite-dimensional, hence by approximating each
  coefficient in $D$ by rational numbers, Proposition \ref{prop:nahomogcont}
  implies the statement for $D \in \Div_\RR(X)$ by continuity.
\end{proof}
\begin{remark}\label{rem:kur291reductions}
  We will repeatedly use the same steps as in the proof of Proposition
  \ref{prop:kur291} to prove statements about $\widehat{h}^i(X,D)$ for arbitrary
  $\RR$-Cartier divisors by reducing to the case when $D$ is a Cartier divisor.
  If $D$ is an $\RR$-Cartier divisor, we can write $D$ as the limit of
  $\QQ$-Cartier divisors by approximating each coefficient in a decomposition of
  $D$ by rational numbers, and continuity of asymptotic cohomological functions
  (Proposition \ref{prop:nahomogcont}) then allows us to reduce to the case when
  $D$ is a $\QQ$-Cartier divisor.
  By homogeneity of asymptotic cohomology functions (Proposition
  \ref{prop:nahomogcont}), one can then reduce to the case when $D$ is a Cartier
  divisor.
\end{remark}
\par We also need the following:
\begin{proposition}[Asymptotic Serre duality; cf.\ {\cite[Cor.\ 2.11]{Kur06}}]
  \label{prop:asympoticserre}
  Let $X$ be a projective variety of dimension $n$, and let $D$ be an
  $\RR$-Cartier divisor on $X$.
  Then, for every $0 \le i \le n$, we have
  \[
    \widehat{h}^i(X,D) = \widehat{h}^{n-i}(X,-D).
  \]
\end{proposition}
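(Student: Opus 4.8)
The plan is to reduce to the case of an integral Cartier divisor exactly as in Remark~\ref{rem:kur291reductions}: by continuity (Proposition~\ref{prop:nahomogcont}) one reduces from an $\RR$-Cartier divisor to a $\QQ$-Cartier divisor, and then by homogeneity of degree $n$ (Proposition~\ref{prop:nahomogcont}) one reduces to the case where $D$ is an honest Cartier divisor. Note that under this reduction $-D$ stays in the same finite-dimensional subspace spanned by the Cartier divisors occurring in a decomposition of $D$, so the continuity argument applies simultaneously to both sides of the claimed equality.

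For $D$ an integral Cartier divisor on the projective variety $X$ of dimension $n$, I would invoke classical Serre duality on $X$. Here one must be slightly careful since $X$ need not be smooth and $k$ need not be perfect; however, $X$ is projective over a field, hence admits a dualizing complex $\omega_X^\bullet$, and Serre duality gives a perfect pairing (of finite-dimensional $k$-vector spaces)
\[
  H^i\bigl(X,\cO_X(mD)\bigr) \times \operatorname{Ext}^{n-i}_X\bigl(\cO_X(mD),\omega_X^\bullet[-n]\bigr) \longrightarrow k.
\]
In particular $h^i(X,\cO_X(mD))$ and the dimension of the corresponding $\operatorname{Ext}$ group agree. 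The point is then to bound the difference between this $\operatorname{Ext}$-dimension and $h^{n-i}(X,\cO_X(-mD))$: if $X$ were Cohen--Macaulay with dualizing sheaf $\omega_X$ this would be exactly $h^{n-i}(X,\omega_X(-mD))$, which differs from $h^{n-i}(X,\cO_X(-mD))$ by $O(m^{n-1})$ by Serre vanishing applied to differences, but in general one argues via the spectral sequence relating $\operatorname{Ext}$ to the cohomology sheaves $\mathcal{H}^j(\omega_X^\bullet)$, which are supported in dimension $< n$ except for $\mathcal{H}^{-n}$. Since a coherent sheaf supported in dimension $\le n-1$ has $h^i$ growing like $O(m^{n-1})$ when twisted by $\cO_X(\pm mD)$ (again by \cite[Ex.\ 1.2.20]{Laz04a}), all the correction terms in the spectral sequence contribute only $O(m^{n-1})$ to $h^{n-i}$. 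Dividing by $m^n/n!$ and taking $\limsup$ kills these lower-order terms, yielding $\widehat{h}^i(X,D) = \widehat{h}^{n-i}(X,-D)$.

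The main obstacle is precisely the absence of smoothness and the possible imperfection of $k$: one cannot simply quote Serre duality in the form $H^i(X,\cO_X(mD))^\vee \cong H^{n-i}(X,\omega_X(-mD))$. One option to sidestep this is to pass to a resolution or alteration and use Proposition~\ref{prop:kur291}, but that changes the variety; the cleaner route, which I would follow, is to work directly with the dualizing complex and absorb the failure of $X$ to be Cohen--Macaulay into lower-order error terms, exploiting that asymptotic cohomological functions are insensitive to $O(m^{n-1})$ perturbations. This is the only place real care is needed; the rest is the now-standard reduction to the integral Cartier case.
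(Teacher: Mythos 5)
Your proposal is correct in outline but takes a genuinely different route from the paper's proof. The paper also first reduces to integral $D$ via Remark \ref{rem:kur291reductions}, but then passes to a regular alteration $f\colon Y \to X$ of degree $d$ \cite[Thm.\ 4.1]{dJ96}, applies classical Serre duality on the regular variety $Y$, removes the resulting twist by $K_Y$ using \cite[Lem.\ 3.2.1]{BGGJKM}, and descends the identity $\widehat{h}^i(Y,f^*D)=\widehat{h}^{n-i}(Y,-f^*D)$ back to $X$ by dividing by $d$, via Proposition \ref{prop:kur291}. Your route stays on $X$ and replaces smoothness by the dualizing complex: Grothendieck duality together with the support bound $\dim\Supp\mathcal{H}^{-j}(\omega_X^\bullet)\le j$ for $j<n$ makes the hypercohomology spectral sequence yield $h^i(X,\cO_X(mD))=h^{n-i}(X,\omega_X(-mD))+O(m^{n-1})$, which is fine. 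The trade-off: the paper keeps the duality step entirely classical at the price of invoking de Jong's theorem and the pullback formula, while your argument avoids alterations but needs the structure theory of dualizing complexes and, crucially, a comparison between $h^{n-i}(X,\omega_X(-mD))$ and $h^{n-i}(X,\cO_X(-mD))$. That last step is the one place your write-up is not yet a proof: ``Serre vanishing applied to differences'' does not establish it, because $\omega_X=\mathcal{H}^{-n}(\omega_X^\bullet)$ is only a torsion-free sheaf of generic rank one, not a twist of $\cO_X$ by a Cartier divisor, so there is nothing to subtract until you build an actual map. The repair is standard: choose a homomorphism $\cO_X(-A)\to\omega_X$ (with $A$ ample) that is nonzero, hence an isomorphism, at the generic point; its kernel vanishes and its cokernel is supported in dimension $\le n-1$, so contributes $O(m^{n-1})$ by \cite[Ex.\ 1.2.20]{Laz04a}, and the remaining fixed Cartier twist by $-A$ is absorbed by \cite[Lem.\ 3.2.1]{BGGJKM} exactly as in the paper's proof. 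Equivalently, one may quote the fact that $\limsup_m h^j(X,\mathcal{F}\otimes\cO_X(mD))/(m^n/n!)$ depends only on the generic rank of the coherent sheaf $\mathcal{F}$. With that one step made precise, your proof is complete and gives a resolution-free and alteration-free argument.
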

\begin{proof}
  By Remark \ref{rem:kur291reductions}, it suffices to consider the
  case when $D$ is integral.
  Let $f\colon Y \to X$ be a regular alteration of degree $d$ \cite[Thm.\
  4.1]{dJ96}.
  We then have
  \[
    \widehat{h}^i(Y,f^*D) = \limsup_{m \to \infty}
    \frac{h^{n-i}\bigl(Y,\cO_Y\bigl(K_Y-f^*(mD)\bigr)\bigr)}{m^n/n!} =
    \widehat{h}^{n-i}(Y,-f^*D)
  \]
  by Serre duality and \cite[Lem.\ 3.2.1]{BGGJKM}, respectively.
  By Proposition \ref{prop:kur291}, the left-hand side is equal to
  $d\cdot\widehat{h}^i(X,D)$ and the right-hand side is equal to
  $d\cdot\widehat{h}^{n-i}(X,-D)$, hence the statement follows after dividing by
  $d$.
\end{proof}

\subsection{A lemma on base loci}\label{sect:lemonbaseloci}
A key ingredient in our proof of Theorem \ref{thm:dfkl41} is the following
result on base loci, which is an analogue of \cite[Prop.\ 3.1]{dFKL07} over more
general fields.
In positive characteristic, we use asymptotic test ideals instead of
asymptotic multiplier ideals, which requires working over an $F$-finite
field.
\begin{proposition}\label{prop:dfkl31}
  Let $V$ be a normal projective variety of dimension at least two over
  an infinite field $k$, where if $\Char k = p > 0$, then we also assume that
  $k$ is $F$-finite.
  Let $D$ be a Cartier divisor on $V$.
  Assume there exists a closed subscheme $Z \subseteq V$ of pure dimension $1$
  such that
  \begin{enumerate}[label=$(\roman*)$]
    \item $D \cdot Z_\alpha < 0$ for every irreducible component $Z_\alpha$ of
      $Z$, and
    \item There exists a projective birational morphism $\mu\colon V' \to V$
      such that $V'$ is regular and $(\mu^{-1}(Z))_\red$ is a simple normal
      crossing divisor.
  \end{enumerate}
  Let $\fa \subseteq \cO_V$ be the ideal sheaf of $Z$.
  Then, there exist positive integers $q$ and $c$ such that for every integer $m
  \ge c$, we have
  \[
    \fb\bigl(\lvert mqD \rvert \bigr) \subseteq \fa^{m-c}.
  \]
\end{proposition}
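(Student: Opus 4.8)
The plan is to adapt the proof of \cite[Prop.~3.1]{dFKL07}, using \emph{asymptotic test ideals} in place of asymptotic multiplier ideals; this substitution is exactly what the $F$-finiteness hypothesis on $k$ buys us, since it makes $V$ an $F$-finite scheme and hence guarantees that the ideals $\tau\bigl(V,\lambda\cdot\lVert D\rVert\bigr)$ are defined (we may assume $\lvert mD\rvert\neq\emptyset$ for some $m$, the case $\fb(\lvert mqD\rvert)=0$ being vacuous). Two ingredients drive the argument. The first is purely formal: the containment $\fb\bigl(\lvert mD\rvert\bigr)\subseteq\tau\bigl(V,m\cdot\lVert D\rVert\bigr)$ of a base ideal in the corresponding asymptotic test ideal, which I expect to follow from the formal properties of test ideals in \cite[Prop.~5.6]{TW18} exactly as the analogous statement does for multiplier ideals (cf.\ \cite[Thm.~11.2.21]{Laz04a}), using subadditivity of base ideals, $\fb(\lvert aD\rvert)\cdot\fb(\lvert bD\rvert)\subseteq\fb(\lvert(a+b)D\rvert)$. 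The second, geometric, ingredient is that hypothesis~$(i)$ forces sections of $\lvert\ell D\rvert$ to vanish along $Z$ to an order growing linearly in $\ell$, and it is here that hypothesis~$(ii)$ enters.

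For the geometric step I would pass to the model $\mu\colon V'\to V$ of~$(ii)$. By Lemma~\ref{lem:baselocusnormal}, $\fb(\lvert\ell D\rvert)\cdot\cO_{V'}=\fb(\lvert\mu^{*}\ell D\rvert)$, and $W\coloneqq\mu^{-1}(Z)_{\mathrm{red}}=\sum_{j}E_{j}$ is a reduced simple normal crossing divisor with $\fb(\lvert\ell D\rvert)\cdot\cO_{V'}\subseteq\cO_{V'}(-W)$. Fix $\ell\ge 1$ and $s\in H^{0}(V,\cO_{V}(\ell D))$, and let $D_{s}\coloneqq\operatorname{div}(s)+\ell D\ge 0$, so $\mu^{*}D_{s}\ge 0$. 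If $E_{j}$ dominates a component $Z_{\alpha}$ of $Z$, I choose — using that $k$ is infinite — a general complete-intersection curve $C_{j}\subseteq E_{j}$ mapping finitely onto $Z_{\alpha}$; then $C_{j}$ lies on no irreducible divisor of $V'$ other than $E_{j}$, while by the projection formula and~$(i)$ one has $\mu^{*}D_{s}\cdot C_{j}=\deg(C_{j}/Z_{\alpha})\cdot\ell\,(D\cdot Z_{\alpha})<0$. Since $\mu^{*}D_{s}$ is effective and $C_{j}$ meets no other component, this forces the coefficient of $E_{j}$ in $\mu^{*}D_{s}$ to be at least $\varepsilon_{j}\ell$ for an explicit $\varepsilon_{j}>0$ independent of $s$. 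For the $E_{j}$ contracted by $\mu$, the same bound is obtained by combining this with the negativity of the intersection form on the $\mu$-exceptional divisors (the negativity lemma). Setting $\varepsilon\coloneqq\min_{j}\varepsilon_{j}>0$, this gives $\fb(\lvert\ell D\rvert)\cdot\cO_{V'}\subseteq\cO_{V'}(-\lfloor\varepsilon\ell\rfloor W)$ for all $\ell\ge 1$.

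I would then convert this into a statement about $\fa$ and finish with Skoda's theorem. Let $\nu\colon V''\to V'$ be the normalized blow-up of $\fa\cdot\cO_{V'}$, so that $\fa\cdot\cO_{V''}=\cO_{V''}(-G)$ for an effective Cartier divisor $G$ with $\nu^{*}W\le G\le k_{0}\,\nu^{*}W$ for some integer $k_{0}\ge 1$ (the left inequality from $\fa\cdot\cO_{V'}\subseteq\cO_{V'}(-W)$, the right from an Artin--Rees estimate $\sqrt{\fa\cdot\cO_{V'}}^{\,k_{0}}\subseteq\fa\cdot\cO_{V'}$). Pulling the inclusion of the previous paragraph back to $V''$ and pushing forward along the birational morphism $V''\to V$ onto the normal variety $V$ gives $\fb(\lvert\ell D\rvert)\subseteq\overline{\fa^{\,\lfloor\varepsilon'\ell\rfloor}}$, where $\varepsilon'\coloneqq\varepsilon/k_{0}$ and the bar is integral closure, since the pushforward of the invertible ideal $\fa^{k}\cdot\cO_{V''}$ along the (normalized) blow-up computes $\overline{\fa^{k}}$. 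Now fix $q$ with $\varepsilon'q\ge 1$ and take $m\ge 1$. For $N$ sufficiently large and divisible, $\tau\bigl(V,m\cdot\lVert qD\rVert\bigr)=\tau\bigl(V,\fb(\lvert NqD\rvert)^{m/N}\bigr)$ by definition of the asymptotic test ideal; since test ideals depend only on the integral closure of the ideal and are monotone, the estimate just obtained yields $\tau\bigl(V,m\cdot\lVert qD\rVert\bigr)\subseteq\tau\bigl(V,\fa^{\,t}\bigr)$ for an exponent $t$ which, for $N$ large, exceeds $\varepsilon'qm-1\ge m-2$. Skoda's theorem for test ideals (again formal from \cite[Prop.~5.6]{TW18}) gives $\tau(V,\fa^{t})\subseteq\fa^{\,\lfloor t\rfloor-h+1}$ once $t\ge h$, where $h$ bounds the number of local generators of $\fa$, whence $\tau\bigl(V,m\cdot\lVert qD\rVert\bigr)\subseteq\fa^{\,m-c}$ with $c\coloneqq h+2$ for all $m\ge c$; combined with $\fb(\lvert mqD\rvert)\subseteq\tau\bigl(V,m\cdot\lVert qD\rVert\bigr)$ this is the assertion.

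The step I expect to be the main obstacle is the geometric one: producing the linear-in-$\ell$ vanishing of $\fb(\lvert\ell D\rvert)\cdot\cO_{V'}$ along \emph{every} component of $\mu^{-1}(Z)_{\mathrm{red}}$. The components dominating a curve in $Z$ are handled by the general-curve computation above, but the $\mu$-contracted components must be controlled through a careful application of the negativity lemma; this, together with the bookkeeping needed to pass from integral closures back to ordinary powers of $\fa$ via Skoda's theorem, is where the real work lies. (The reduction to the $F$-finite case — the crux of Theorem~\ref{thm:dfkl41} — is here built into the hypotheses, so it plays no role in this proposition beyond licensing the use of test ideals.)
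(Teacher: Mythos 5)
Your overall architecture --- bound the order of vanishing of $\mu^*\operatorname{div}(s)$ along each component of $E=(\mu^{-1}(Z))_\red$ linearly in $\ell$, then descend to $V$ through the normalized blow-up and integral closures, finishing with $\overline{\fa^{\ell+1}}=\fa\cdot\overline{\fa^{\ell}}$ for $\ell\gg0$ --- parallels the paper's, and your treatment of the components dominating some $Z_\alpha$ (general complete-intersection curves, the projection formula, and the observation that effectivity of $\mu^*D_s$ together with $\mu^*D_s\cdot C_j<0$ forces both $(E_j\cdot C_j)<0$ and a coefficient bound $\ge\varepsilon_j\ell$) is sound and close in spirit to Step~\ref{step:dfkl31step2} of the paper's proof. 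The genuine gap is exactly where you locate it: the components of $E$ contracted to points. ``Negativity of the intersection form on the $\mu$-exceptional divisors'' is a surface phenomenon; for $\dim V\ge3$ there is no negative-definite form to invoke, and the negativity lemma itself does not apply because the divisor one would feed it, $-(\mu^*D_s-\widetilde{D_s})\equiv_\mu\widetilde{D_s}$, is not $\mu$-nef (it can be negative on contracted curves inside $\widetilde{D_s}$). Even granting a qualitative conclusion, you need a bound growing linearly in $\ell$ that reaches components of $E$ not adjacent to any dominating component, which requires the connectedness of $\mu^{-1}(Z_\alpha)$ (Zariski's main theorem, using normality of $V$) to propagate along chains. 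The paper's Step~\ref{step:dfkl31step3} supplies the propagation mechanism your sketch lacks: it works with the asymptotic test ideals $\tau(\lVert\ell D'\rVert)$ on the \emph{regular} model $V'$, uses subadditivity $\tau(\lVert m\ell D'\rVert)\subseteq\tau(\lVert\ell D'\rVert)^m$ to turn order-one vanishing along an adjacent $E_j$ into order-$m$ vanishing, and then uses uniform global generation of $\tau(\lVert m\ell D'\rVert)\otimes\cO_{V'}(m\ell D'+A)$ together with $\cO_C(D')=\cO_C$ for contracted $C$ to force $(A\cdot C)\ge m$ unless the test ideal vanishes along $C$. Note that the naive analogue for base ideals goes the wrong way ($\fb(\lvert aD\rvert)\cdot\fb(\lvert bD\rvert)\subseteq\fb(\lvert(a+b)D\rvert)$, not the reverse), so some such non-elementary input is needed --- although an elementary adjacency argument can be made to work (for general complete-intersection $C\subseteq E_i$ contracted by $\mu$ one has $\mu^*D_s\cdot C=0$, and the only possibly negative term in the resulting sum is $a_i(E_i\cdot C)$), it is not the negativity lemma, and it still requires the connectedness input to start the induction on each connected component of $E$.

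A secondary flaw: the containment $\fb(\lvert mD\rvert)\subseteq\tau(V,m\cdot\lVert D\rVert)$ anchoring your final Skoda step is false on a general normal $V$; it requires the ambient scheme to be strongly $F$-regular (already for $D=0$ one has $\fb(\lvert 0\rvert)=\cO_V$ while $\tau(V)\subsetneq\cO_V$ whenever $V$ is not strongly $F$-regular). This is precisely why the paper only ever writes $\fb(\lvert\ell D'\rvert)\subseteq\tau(\lvert\ell D'\rvert)$ on the regular model $V'$, where subadditivity is also available. Fortunately this detour is unnecessary: your middle paragraph already yields $\fb(\lvert\ell D\rvert)\subseteq\overline{\fa^{\lfloor\varepsilon'\ell\rfloor}}$, and the integral-closure statement $\overline{\fa^{m}}\subseteq\fa^{m-c}$ for $m\ge c$ finishes from there, exactly as in the paper's last step.
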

\par Here, $\fb(\lvert D \rvert)$ denotes the \textsl{base ideal} of the Cartier
divisor $D$; see Definition \ref{def:baseideal}.
We note that the $Z_\alpha$ can possibly be non-reduced.
\par In the proof below, we will use the fact \cite[Lem.\ B.12]{Kle05} that if
$W$ is a one-dimensional subscheme of a complete scheme $X$ over a field, and if
$D$ is a Cartier divisor on $X$, then
\begin{equation}\label{eq:nonredintersection}
  (D \cdot W) = \sum_\alpha \length_{\cO_{X,\eta_\alpha}} \bigl(
  \cO_{W_\alpha,\eta_\alpha} \bigr) \cdot (D \cdot W_\alpha),
\end{equation}
where the $W_\alpha$ are the one-dimensional components of $W$ with generic
points $\eta_\alpha \in W_\alpha$.
\begin{proof}
  The statement is trivial if $H^0(V,\cO_V(mD)) = 0$ for every integer $m > 0$,
  since in this case $\fb(\lvert mqD \rvert) = 0$ for all positive integers
  $m,q$.
  We therefore assume $H^0(V,\cO_V(mD)) \ne 0$ for some integer $m > 0$.
  We will prove the statement in positive characteristic; see Remark
  \ref{rmk:dfkl31char0} for the characteristic zero case.
  \par We fix some notation.
  Set $D' = \mu^*D$ and set $E = (\mu^{-1}(Z))_\red$.
  We fix a very ample Cartier divisor $H$ on $V'$, and set $A =
  K_{V'} + (\dim V' + 1)H$.
  For every subvariety $W \subseteq V'$, a \textsl{complete intersection
  curve} is a curve formed by taking the intersection of $\dim W - 1$
  hyperplane sections in $\bigl\lvert H\rvert_W \bigr\rvert$, and a
  \textsl{general complete intersection curve} is one formed by taking these
  hyperplane sections to be general in $\bigl\lvert H\rvert_W \bigr\rvert$.
  For each positive integer $q$, we will consider the asymptotic test ideal
  \[
    \tau\bigl(V',\lVert qD' \rVert\bigr) = \tau\bigl(\lVert qD'
    \rVert\bigr) \subseteq \cO_{V'}.
  \]
  By uniform global generation for test ideals \cite[Prop.\
  4.1]{Sat18}, the sheaf
  \begin{equation}\label{eq:prop31gg}
    \tau\bigl(\lVert qD' \rVert\bigr) \otimes \cO_{V'}(qD'+A)
  \end{equation}
  is globally generated for every integer $q > 0$.
  \begin{step}\label{step:dfkl31step2}
    There exists an integer $\ell_0 > 0$ such that for every integer $\ell >
    \ell_0$ and for every irreducible component $F$ of $E$ that dominates
    $(Z_\alpha)_\red$ for some $\alpha$, we have
    \[
      \tau\bigl(\lVert \ell D' \rVert\bigr) \subseteq \cO_{V'}(-F).
    \]
  \end{step}
  \par Let $C \subseteq F$ be a general complete intersection curve; note that
  $C$ is integral by Bertini's theorem \cite[Thm.\ 3.4.10 and Cor.\
  3.4.14]{FOV99} and dominates $(Z_\alpha)_\red$ for some $\alpha$, hence $(D'
  \cdot C) < 0$ by the projection formula and \eqref{eq:nonredintersection}.
  If for some integer $q > 0$, the curve $C$ is not contained in the zero locus
  of $\tau(\lVert qD' \rVert)$, then the fact that the sheaf \eqref{eq:prop31gg}
  is globally generated implies
  \[
    \bigl( (qD' + A) \cdot C \bigr) \ge 0.
  \]
  Letting $\ell_{0F} = -(A \cdot C)/(D' \cdot C)$, we see that the ideal
  $\tau(\lVert \ell D' \rVert)$ vanishes everywhere along $C$ for every integer
  $\ell > \ell_{0F}$.
  By varying $C$, the ideal $\tau(\lVert \ell D' \rVert)$ must vanish everywhere
  along $F$ for every integer $\ell > \ell_{0F}$, hence we can set $\ell_0 =
  \max_F\{\ell_{0F}\}$.
  \begin{step}\label{step:dfkl31step3}
    Let $E_i$ be an irreducible component of $E$ not dominating
    $Z_\alpha$ for every $\alpha$.
    Suppose $E_j$ is another irreducible component of $E$ such that
    $E_i \cap E_j \ne \emptyset$ and for which there exists an integer $\ell_j$
    such that for every integer $\ell > \ell_j$, we have
    \begin{align*}
      \tau\bigl(\lVert \ell D' \rVert\bigr) &\subseteq \cO_{V'}(-E_j).
      \intertext{Then, there is an integer $\ell_i \ge \ell_j$ such that for
      every integer $\ell > \ell_i$, we have}
      \tau\bigl(\lVert \ell D' \rVert\bigr) &\subseteq \cO_{V'}(-E_i).
    \end{align*}
  \end{step}
  \par Let $C \subseteq E_i$ be a complete intersection curve.
  By the assumption that $E$ is a simple normal crossing divisor, there exists
  at least one closed point $P \in C \cap E_j$.
  For every $\ell > \ell_j$ and every $m > 0$, we have the sequence of
  inclusions
  \begin{equation}\label{eq:dfkl31incl}
    \begin{aligned}
      \MoveEqLeft[4]\Bigl( \tau\bigl(\lVert m\ell D'\rVert\bigr) \otimes
      \cO_{V'}(m\ell D'+A) \Bigr) \cdot \cO_C
      \subseteq \Bigl( \tau\bigl(\lVert \ell D'\rVert\bigr)^m \otimes
      \cO_{V'}(m\ell D'+A) \Bigr) \cdot \cO_C\\
      &\subseteq \Bigl( \cO_{V'}(-mE_j) \otimes
      \cO_{V'}(m\ell D'+A) \Bigr) \cdot \cO_C
      \subseteq \cO_C(A\rvert_C - mP)
    \end{aligned}
  \end{equation}
  where the first two inclusions follow from subadditivity \cite[Thm.\
  6.10(2)]{HY03} and by assumption, respectively.
  The last inclusion holds since $C$ maps to a closed point in $V$, hence
  $\cO_C(D') = \cO_C$.
  By the global generation of the sheaf in \eqref{eq:prop31gg} for $q = m\ell$,
  the inclusion \eqref{eq:dfkl31incl} implies that for every integer $\ell >
  \ell_j$, if $\tau(\lVert m\ell D' \rVert)$ does not vanish everywhere along
  $C$, then $(A \cdot C) \ge m$.
  Choosing $\ell_i = \ell_j \cdot ((A \cdot C) + 1 )$, we see that
  $\tau(\lVert \ell D' \rVert)$ vanishes everywhere along $C$ for every integer
  $\ell > \ell_i$.
  By varying $C$, we have $\tau(\lVert \ell D' \rVert) \subseteq
  \cO_{V'}(-E_i)$ for every integer $\ell > \ell_i$.
  \begin{step}\label{lem:dfkl32}
    There exists an integer $a > 0$ such that $\fb(\lvert maD' \rvert)
    \subseteq \cO_{V'}(-mE)$ for every integer $m > 0$.
  \end{step}
  \par Write
  \[
    E = \bigcup_j \bigcup_{i \in I_j} E_{ij},
  \]
  where the $E_{ij}$ are the irreducible components of $E$, and the
  $\bigcup_{i \in I_j} E_{ij}$ are the connected components of $E$.
  Since $V$ is normal, each preimage $\mu^{-1}(Z_\alpha)$ is connected by
  Zariski's main theorem \cite[Cor.\ III.11.4]{Har77}, hence each connected
  component $\bigcup_{i \in I_j} E_{ij}$ of $E$ contains an irreducible
  component $E_{i_0j}$ that dominates $(Z_\alpha)_\red$ for some $\alpha$.
  By Step \ref{step:dfkl31step2}, there exists an integer $\ell_0$
  such that for every $j$, we have $\tau(\lVert \ell D' \rVert) \subseteq
  \cO_{V'}(-E_{i_0j})$ for every integer $\ell > \ell_0$.
  For each $j$, by applying Step \ref{step:dfkl31step3} $(\lvert I_j
  \rvert-1)$ times to the $j$th connected component $\bigcup_{i \in I_j}
  E_{ij}$ of $E$, we can find $\ell_j$ such that
  $\tau(\lVert \ell D' \rVert) \subseteq \cO_{V'}(-E_{ij})$
  for every $i \in I_j$ and for every integer $\ell > \ell_j$.
  Setting $a = \max_j\{\ell_j\}+1$, we have $\tau(\lVert aD'
  \rVert) \subseteq \cO_{V'}(-E)$.
  Thus, for every integer $m > 0$, we have
  \[
    \fb\bigl(\lvert maD' \rvert\bigr) \subseteq \tau\bigl(\lvert
    maD'\rvert\bigr) \subseteq \tau\bigl(\lVert maD'\rVert\bigr) \subseteq
    \tau\bigl(\lVert aD'\rVert\bigr)^m \subseteq \cO_{V'}(-mE),
  \]
  where the first inclusion follows by the fact that $V'$ is regular hence
  strongly $F$-regular \cite[Props.\ 5.6(1) and 5.6(5)]{TW18}, the second
  inclusion is by definition of the asymptotic test ideal, and the third
  inclusion is by subadditivity \cite[Thm.\ 6.10(2)]{HY03}.
  \begin{step}
    Conclusion of proof of Proposition \ref{prop:dfkl31}.
  \end{step}
  \par Let $\pi\colon V'' \to V'$ be the normalized blowup of the
  ideal $\mu^{-1}\fa\cdot\cO_{V'}$, and write
  $(\mu\circ\pi)^{-1}\fa\cdot\cO_{V''} =
  \cO_{V''}(-E'')$ for a Cartier divisor $E''$ on
  $V''$.
  Note that since $\cO_{V''}(-(\pi^*E)_\red)$ is the radical of
  $\cO_{V''}(-E'')$, there exists an integer $b > 0$ such
  that $\cO_{V''}(-b(\pi^*E)_\red) \subseteq
  \cO_{V''}(-E'')$.
  We then have
  \[
    \fb\bigl(\lvert mab\,\pi^*D' \rvert\bigr) =
    \pi^{-1}\fb\bigl(\lvert mabD'\rvert\bigr) \cdot \cO_{V''}
    \subseteq \cO_{V''}(-mb\,\pi^*E) \subseteq
    \cO_{V''}\bigl(-mb(\pi^*E)_\red\bigr) \subseteq
    \cO_{V''}(-mE'')
  \]
  by Step \ref{lem:dfkl32}, where the first equality holds by
  Lemma \ref{lem:baselocusnormal}.
  Setting $q = ab$ and pushing forward by $\mu \circ \pi$, we have
  \[
    \fb\bigl(\lvert mqD \rvert\bigr) \subseteq (\mu \circ \pi)_*\fb\bigl(\lvert
    mq\,\pi^*D'\rvert\bigr) \subseteq (\mu \circ
    \pi)_*\cO_{V''}(-mE'') = \overline{\fa^m},
  \]
  where the first inclusion follows from Lemma \ref{lem:baselocusnormal}, and
  where $\overline{\fa^m}$ is the integral closure of $\fa^m$
  \cite[Rem.\ 9.6.4]{Laz04b}.
  Finally, given any ideal $\fa \subseteq \cO_V$, there exists an
  integer $c$ such that $\overline{\fa^{\ell+1}} = \fa \cdot
  \overline{\fa^\ell}$ for all $\ell \ge c$ \cite[Proof of Prop.\
  9.6.6]{Laz04b}, hence $\overline{\fa^m} \subseteq \fa^{m-c}$ for all $m \ge
  c$.
\end{proof}
\begin{remark}\label{rmk:dfkl31char0}
  When $\Char k = 0$, one can prove the stronger statement of
  \cite[Prop.\ 3.1]{dFKL07} using resolutions of singularities and the
  asymptotic multiplier ideals $\cJ(\lVert D \rVert)$ defined in \cite[Def.\
  11.1.2]{Laz04b} by replacing \cite[Prop.\ 5.6]{TW18} and \cite[Thm.\
  6.10(2)]{HY03} with \cite[Prop.\ 2.3]{dFM09} and \cite[Thm.\ A.2]{JM12},
  respectively.
  To replace \cite[Prop.\ 4.1]{Sat18}, one can pass to the algebraic closure
  (since the formation of multiplier ideals is compatible with ground field
  extensions \cite[Prop.\ 1.9]{JM12}) to deduce uniform global generation
  from the algebraically closed case \cite[Cor.\ 11.2.13]{Laz04b}.
\end{remark}
\par Before moving on to the proof of Theorem \ref{thm:dfkl41},
we note that after a preprint of this paper was posted, the authors of
\cite{MPST19} informed us that they had proved an asymptotic non-vanishing
statement \cite[Lem.\ 4.2]{MPST19} using techniques similar to ours in
Proposition \ref{prop:dfkl31} and Theorem \ref{thm:dfkl41}.
By combining the methods in this paper and in \cite[Lems.\ 4.3 and 4.4]{MPST19},
one can prove the full analogue of \cite[Prop.\ 3.1]{dFKL07}, namely:
\begin{proposition}\label{prop:dfkl31full}
  Let $V$ be a normal projective variety of dimension at least two over
  a field $k$.
  Let $D$ be a Cartier divisor on $V$, and suppose there exists an integral
  curve $Z \subseteq V$ such that $(D \cdot Z) < 0$.
  Denote by $\fa \subseteq \cO_V$ the ideal sheaf defining $Z$.
  Then, there exist positive integers $q$ and $c$ such that for every integer $m
  \ge c$, we have
  \[
    \fb\bigl(\lvert mqD \rvert \bigr) \subseteq \fa^{m-c}.
  \]
\end{proposition}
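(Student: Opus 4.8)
The plan is to deduce Proposition \ref{prop:dfkl31full} from Proposition \ref{prop:dfkl31} by arranging all the extra hypotheses present in the latter. There are three reductions to perform: first, from a general field $k$ to an \emph{infinite} field (and, in characteristic $p$, an \emph{$F$-finite} field); second, from an arbitrary integral curve $Z$ to one admitting the weak log resolution in hypothesis $(ii)$; and third, making sure the intersection number $D\cdot Z<0$ survives. I expect the field reduction to be routine given the machinery already in the paper, while the resolution hypothesis is the real content and is where the methods of \cite[Lems.\ 4.3 and 4.4]{MPST19} enter.

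For the field reduction I would proceed exactly as described in the difficulties section of \S\ref{sect:dfkl}: adjoin uncountably (or merely infinitely) many indeterminates to $k$ and then apply the gamma construction (Theorem \ref{thm:gammaconstintro}) to obtain an $F$-finite field extension $k\subseteq k'$ with $k'$ infinite. By flat base change, $H^0(V,\cO_V(mqD))\otimes_k k' = H^0(V\times_k k', \cO(mqD))$, so base ideals are compatible with this extension; normality of $V$ is preserved by Theorem \ref{thm:gammaconst}$(\ref{thm:gammaconstreg})$ after a suitable choice of $\Gamma$ (one may need to replace $Z$ by its reduced preimage and note that the preimage of an integral curve may acquire several components of pure dimension one, which is harmless since Proposition \ref{prop:dfkl31} allows that); and $(D\cdot Z)<0$ is preserved since intersection numbers are stable under field extension. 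Finally, the containment $\fb(\lvert mqD\rvert)\subseteq\fa^{m-c}$ descends from $k'$ to $k$ by faithful flatness. Thus it suffices to prove the statement over an infinite $F$-finite field.

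The substantive step is to remove hypothesis $(ii)$ of Proposition \ref{prop:dfkl31}: given an integral curve $Z$ with ideal sheaf $\fa$ and $(D\cdot Z)<0$, one cannot in general find a regular birational model on which $(\mu^{-1}(Z))_{\red}$ is a simple normal crossing divisor without resolution of singularities. Following \cite[Lems.\ 4.3 and 4.4]{MPST19}, the idea is to replace the global regularity/SNC requirement by a purely local-along-$Z$ statement: one works on the normalized blowup $\mu\colon V'\to V$ of $\fa$, where the exceptional locus is a Cartier divisor, and uses that the conclusion $\fb(\lvert mqD\rvert)\subseteq\fa^{m-c}$ only involves behavior in a neighborhood of $Z$. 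The key technical inputs of Proposition \ref{prop:dfkl31}—uniform global generation for asymptotic test ideals \cite[Prop.\ 4.1]{Sat18}, subadditivity \cite[Thm.\ 6.10(2)]{HY03}, and the combinatorial propagation in Steps \ref{step:dfkl31step2} and \ref{step:dfkl31step3} along the components of the exceptional divisor—are applied on a resolution of a neighborhood of $Z$, which exists because a curve (being one-dimensional) always admits an embedded resolution, or more precisely because the relevant vanishing of asymptotic test ideals can be checked on general complete intersection curves inside the exceptional components regardless of whether the ambient model is globally SNC. The hard part will be checking that Bertini-type genericity arguments for these complete intersection curves, and the connectedness of fibers used in Step \ref{lem:dfkl32} (via Zariski's main theorem), still go through when one only has the weak model coming from $Z$ rather than a full log resolution; this is precisely the combinatorial/geometric bookkeeping carried out in \cite[Lems.\ 4.3 and 4.4]{MPST19}, which one invokes after the above field reduction to conclude.
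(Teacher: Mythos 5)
The paper does not actually prove Proposition \ref{prop:dfkl31full} in the text: it records the statement, notes that it follows ``by combining the methods in this paper and in \cite[Lems.\ 4.3 and 4.4]{MPST19},'' and refers to \cite[Prop.\ 6.2.1]{Mur19} for the proof. Your overall plan --- reduce to an infinite, $F$-finite ground field and then remove hypothesis $(ii)$ of Proposition \ref{prop:dfkl31} --- is therefore the plan the paper itself indicates, and your field reduction is sound: it is the argument of Lemma \ref{lem:dfklfieldred}, and base ideals, the ideal $\fa$, intersection numbers, and the desired containment all behave as you say under the faithfully flat extension (with $Z$ replaced by its scheme-theoretic, possibly non-reduced, preimage, which Proposition \ref{prop:dfkl31} permits).

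The gap is in the substantive step. Your justification for producing the regular model required by hypothesis $(ii)$ --- that ``a curve (being one-dimensional) always admits an embedded resolution'' --- is precisely what is \emph{not} available here: an embedded resolution of $Z \subseteq V$ entails resolving the ambient $V$, which is problem (\ref{problem:nores}) of the introduction once $\Char k = p > 0$ and $\dim V \ge 4$. Working ``on a resolution of a neighborhood of $Z$'' does not evade this, and the normalized blowup of $\fa$ is in general neither regular nor SNC, so subadditivity, the inclusion $\fb \subseteq \tau$, and Steps \ref{step:dfkl31step2}--\ref{lem:dfkl32} cannot be run on it. The mechanism that works --- and which the paper already uses in the proof of Theorem \ref{thm:dfkl41} --- is to take a regular alteration of the pair $(V,Z)$ \cite[Thm.\ 4.1]{dJ96}, Stein factorize it as $V' \to \widetilde{V} \xrightarrow{\nu} V$ with $\nu$ finite and $\widetilde{V}$ normal, and check that $\widetilde{V}$ with $\widetilde{Z} = \nu^{-1}(Z)$ satisfies every hypothesis of Proposition \ref{prop:dfkl31} for $\nu^*D$. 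One then descends along the finite cover: $\fb(\lvert mqD\rvert)\cdot\cO_{\widetilde{V}} \subseteq \fb(\lvert mq\,\nu^*D\rvert) \subseteq (\fa\cdot\cO_{\widetilde{V}})^{m-c}$, hence $\fb(\lvert mqD\rvert) \subseteq \overline{\fa^{m-c}}$ by finiteness of $\nu$ and normality of $V$, and the integral-closure comparison at the end of the proof of Proposition \ref{prop:dfkl31} converts $\overline{\fa^{m-c}}$ into a power of $\fa$ after enlarging $c$. This descent along a finite morphism is the ingredient missing from your sketch; without it (or a genuinely worked-out substitute from \cite{MPST19}) the proposal does not close.
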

\par We will not use Proposition \ref{prop:dfkl31full} in the sequel.
See \cite[Prop.\ 6.2.1]{Mur19} for a proof.
\subsection{Proof of Theorem \ref{thm:dfkl41}}\label{sect:dfkl41proof}
We now prove Theorem \ref{thm:dfkl41}.
We first note that the direction $\Rightarrow$ in Theorem \ref{thm:dfkl41}
follows from existing results.
\begin{proof}[Proof of $\Rightarrow$ in Theorem \ref{thm:dfkl41}]
  Let $A$ be a very ample Cartier divisor.
  Then, for all $t$ such that $L-tA$ is ample, we have $\widehat{h}^i(X,L-tA) =
  0$ by Serre vanishing and by homogeneity and
  continuity; see Remark \ref{rem:kur291reductions}.
\end{proof}
\par For the direction $\Leftarrow$, it suffices to show Theorem \ref{thm:dfkl41} for
Cartier divisors $L$ by continuity and homogeneity; see Remark
\ref{rem:kur291reductions}.
We also make the following two reductions.
Recall that an $\RR$-Cartier divisor $L$ on $X$ \textsl{satisfies
$(\ref{thm:dfkl41cond})$} for a pair $(A,\varepsilon)$ consisting of a very
ample Cartier divisor $A$ on $X$ and a real number $\varepsilon > 0$ if
$\widehat{h}^i(X,L-tA) = 0$ for all $i > 0$ and all $t \in [0,\varepsilon)$.
\begin{lemma}\label{lem:dfklfieldred}
  To prove the direction $\Leftarrow$ in Theorem \ref{thm:dfkl41}, we may assume
  that the ground field $k$ is uncountable.
  In positive characteristic, we may also assume that $k$ is $F$-finite.
\end{lemma}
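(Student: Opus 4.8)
The plan is to show that the hypothesis $(\ref{thm:dfkl41cond})$ and the conclusion ``$L$ is ample'' are each compatible with enlarging the ground field, and then to enlarge $k$ first to an uncountable field and afterwards, in characteristic $p > 0$, to an $F$-finite one by means of the gamma construction. Concretely, let $k \subseteq k'$ be a field extension, write $X' = X \times_k k'$ with projection $\pi\colon X' \to X$, and set $L' = \pi^*L$ and $A' = \pi^*A$. Pulling back a fixed decomposition of $L - tA$ into Cartier divisors shows $\cO_{X'}(\lceil m(L'-tA')\rceil) \simeq \pi^*\cO_X(\lceil m(L-tA)\rceil)$, and since $k'$ is flat over $k$, flat base change for cohomology gives $h^i\bigl(X',\cO_{X'}(\lceil m(L'-tA')\rceil)\bigr) = h^i\bigl(X,\cO_X(\lceil m(L-tA)\rceil)\bigr)$ for all $i$ and $m$, whence $\widehat{h}^i(X',L'-tA') = \widehat{h}^i(X,L-tA)$. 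Since $A'$ is again very ample, $\dim X' = \dim X = n$, and $X'$ is again projective, it follows that $L'$ satisfies $(\ref{thm:dfkl41cond})$ for $(A',\varepsilon)$ whenever $L$ satisfies $(\ref{thm:dfkl41cond})$ for $(A,\varepsilon)$, provided $X'$ is a projective \emph{variety} so that Theorem \ref{thm:dfkl41} is applicable to it. In the other direction, $\pi$ is faithfully flat and quasi-compact, so ampleness descends along it: if $L'$ is ample on $X'$, then $L$ is ample on $X$ \cite[Cor.\ 2.7.2]{EGAIV2}. Hence, if the direction $\Leftarrow$ of Theorem \ref{thm:dfkl41} is known for $X'$, it follows for $X$, once $k'$ and $X'$ are chosen suitably.

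It remains to choose $k'$. To make the field uncountable, take $k_1 = k(\{t_\alpha\}_{\alpha \in I})$ for an uncountable index set $I$; a purely transcendental extension of a domain is again a domain, being a localization of a polynomial ring over that domain, so $X_1 \coloneqq X \times_k k_1$ is integral, hence---being separated and of finite type over $k_1$---a projective variety of dimension $n$ over the uncountable field $k_1$. In characteristic zero this completes the reduction. In characteristic $p > 0$, I would then apply the gamma construction (Theorem \ref{thm:gammaconstintro}) to $X_1$ over $k_1$ with $\cQ = \{\text{reduced}\}$: it produces a purely inseparable field extension $k_1 \subseteq k_1^\Gamma$ with $k_1^\Gamma$ $F$-finite such that the projection $\pi^\Gamma\colon X_1 \times_{k_1} k_1^\Gamma \to X_1$ is a homeomorphism identifying reduced loci. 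Abbreviating $X_1^\Gamma = X_1 \times_{k_1} k_1^\Gamma$, we see that $X_1^\Gamma$ is reduced (since $X_1$ is) and irreducible (since $\pi^\Gamma$ is a homeomorphism), hence a projective variety of dimension $n$ over $k_1^\Gamma$; moreover $k_1^\Gamma$ is $F$-finite and still uncountable, being an algebraic extension of the uncountable field $k_1$. Composing the extensions $k \subseteq k_1 \subseteq k_1^\Gamma$ and invoking the base-change compatibilities from the previous paragraph yields the lemma.

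The one step that is not purely formal is the preservation of reducedness under the purely inseparable base change $k_1 \subseteq k_1^\Gamma$: this can fail for a general purely inseparable extension (for instance, for the perfect closure), and it is exactly to control this that the argument is routed through the gamma construction rather than simply perfecting the ground field. Everything else is flat base change for cohomology together with faithfully flat descent of ampleness, both standard.
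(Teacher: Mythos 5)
Your proposal is correct and follows essentially the same route as the paper: extend $k$ by uncountably many indeterminates (checking integrality of the base change via affine opens that are localizations of polynomial rings over domains), then in characteristic $p>0$ apply the gamma construction to preserve integrality while making the field $F$-finite, and conclude by flat base change for cohomology (giving invariance of $\widehat{h}^i$ and of very ampleness of $A$) together with faithfully flat descent of ampleness. Your explicit use of the reduced locus plus the homeomorphism to get integrality of $X_1^\Gamma$ is just a spelled-out version of the paper's appeal to Theorem~\ref{thm:gammaconstintro}, and your direct computation of $\widehat{h}^i$ via a fixed pulled-back decomposition is an acceptable (slightly more direct) substitute for the paper's homogeneity-and-continuity reduction.
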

\begin{proof}
  We first construct a sequence
  \[
    k \subseteq k' \subseteq K
  \]
  of two field extensions such that $X \times_k K$ is integral, where $k'$ is
  uncountable and $K$ is $F$-finite in positive characteristic.
  If $k$ is already uncountable, then let $k' = k$.
  Otherwise, consider a purely transcendental extension
  \[
    k' \coloneqq k(x_\alpha)_{\alpha \in A}
  \]
  where $\{x_\alpha\}_{\alpha \in A}$ is an uncountable set of indeterminates;
  note that $k'$ is uncountable by construction.
  To show that $X \times_k k'$ is integral, let $\bigcup_j U_j$ be an affine
  open covering of $X$.
  Then, $X \times_k k'$ is covered by affine open subsets that are localizations
  of the integral varieties $U_j \times_k \Spec k[x_\alpha]_{\alpha \in A}$,
  which pairwise intersect, hence $X \times_k k'$ is integral.
  We set $K = k'$ in characteristic zero, and in positive characteristic, the
  gamma construction (Theorem \ref{thm:gammaconstintro}) shows that there is a
  field extension $k' \subseteq K$ such that $K$ is $F$-finite and the scheme $X
  \times_k K$ is integral.
  Note that $K$ is uncountable since it contains the uncountable field $k'$.
  \par Now suppose $X$ is a projective variety over $k$, and let $L$ be an
  Cartier divisor satisfying $(\ref{thm:dfkl41cond})$ for some pair
  $(A,\varepsilon)$.
  Let
  \[
    \pi\colon X \times_k K \longrightarrow X
  \]
  be the first projection map, which we note is faithfully flat by base change.
  Then, the pullback $\pi^*A$ of $A$ is very ample,
  and to show that $L$ is ample, it suffices to show that $\pi^*L$ is ample by
  flat base change and Serre's criterion for ampleness.
  By the special case of Theorem \ref{thm:dfkl41} over the ground field $K$, it
  therefore suffices to show that $\pi^*L$ satisfies $(\ref{thm:dfkl41cond})$
  for the pair $(\pi^*A,\varepsilon)$.
  \par We want to show that for every $i > 0$ and for all
  $t \in [0,\varepsilon)$, we have
  \begin{equation}\label{eq:hhatinvfieldext}
    \widehat{h}^i(X,L-tA) = \widehat{h}^i\bigl(X \times_k K,\pi^*(L-tA)\bigr) =
    0.
  \end{equation}
  For every $D \in \Div(X)$ and every $i \ge 0$, the number $h^i(X,\cO_X(D))$ is
  invariant under ground field extensions by flat base change,
  hence $\widehat{h}^i(X,D)$ is also.
  By homogeneity and continuity (see Remark \ref{rem:kur291reductions}), the
  number $\widehat{h}^i(X,D)$ is also invariant under ground field extensions
  for $D \in \Div_\RR(X)$, hence \eqref{eq:hhatinvfieldext} holds.
\end{proof}
\begin{remark}
  We note that if $k$ is $F$-finite or perfect, then one can construct a field
  extension $k \subseteq K$ as in Lemma \ref{lem:dfklfieldred} in a more
  elementary manner.
  When $k$ is $F$-finite of characteristic $p > 0$, then one can set $K$ to be
  $k(x^{1/p^\infty}_\alpha)_{\alpha \in A}$ for an uncountable set of
  indeterminates $\{x_\alpha\}_{\alpha \in A}$, since integrality and normality
  are preserved under limits of schemes with affine and flat transition
  morphisms \cite[Cor.\ 5.13.4]{EGAIV2}.
  When $k$ is perfect, then one can set $K$ to be a perfect closure of
  $k(x_\alpha)_{\alpha \in A}$.
  In this case, $X$ is geometrically reduced, and the morphism $X \times_k K \to
  X \times_k k(x_\alpha)_{\alpha \in A}$ is a homeomorphism since
  $k(x_\alpha)_{\alpha \in A} \subseteq K$ is purely inseparable \cite[Prop.\
  2.4.5$(i)$]{EGAIV2}.
  Thus, the base extension $X \times_k K$ is integral.
\end{remark}
\begin{lemma}\label{lem:nonnefcounterex}
  To prove the direction $\Leftarrow$ in Theorem \ref{thm:dfkl41}, it suffices
  to show that every Cartier divisor satisfying $(\ref{thm:dfkl41cond})$ is nef.
\end{lemma}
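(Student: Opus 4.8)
The plan is to deduce the implication ``$L$ satisfies $(\ref{thm:dfkl41cond})$ $\Rightarrow$ $L$ ample'' from the weaker implication ``$L$ satisfies $(\ref{thm:dfkl41cond})$ $\Rightarrow$ $L$ nef'' by a perturbation argument, using only the standard fact that the sum of a nef divisor and an ample divisor is ample. By the reduction already in place, it suffices to treat a Cartier divisor $L$ satisfying $(\ref{thm:dfkl41cond})$ for some pair $(A,\varepsilon)$; assuming that every Cartier divisor satisfying $(\ref{thm:dfkl41cond})$ is nef, we must show that this $L$ is ample.

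First I would fix an integer $q$ with $q\varepsilon > 1$ and consider the Cartier divisor $qL - A$ (it is a $\ZZ$-linear combination of Cartier divisors, hence Cartier, which is the point of scaling $L$ rather than perturbing $L$ itself by a rational multiple of $A$). By homogeneity of the asymptotic cohomological functions $\widehat{h}^i(X,-)$ (Proposition \ref{prop:nahomogcont}), for every $i > 0$ and every $t \in [0,\, q\varepsilon - 1)$ we have
\[
  \widehat{h}^i\bigl(X,(qL-A) - tA\bigr)
  = \widehat{h}^i\Bigl(X, q\bigl(L - \tfrac{1+t}{q}A\bigr)\Bigr)
  = q^n\,\widehat{h}^i\Bigl(X, L - \tfrac{1+t}{q}A\Bigr)
  = 0,
\]
since $\tfrac{1+t}{q} \in [0,\varepsilon)$ and $L$ satisfies $(\ref{thm:dfkl41cond})$ for $(A,\varepsilon)$. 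Thus $qL - A$ is a Cartier divisor satisfying $(\ref{thm:dfkl41cond})$ for the pair $(A,\, q\varepsilon - 1)$, and $q\varepsilon - 1 > 0$.

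By the assumed implication, $qL - A$ is nef. Since $A$ is very ample, $qL = (qL - A) + A$ is the sum of a nef divisor and an ample divisor, hence ample, and therefore $L$ is ample; this is the desired conclusion. There is no genuine obstacle in this step: it is a soft reduction, and the only point requiring a little care is staying inside the class of Cartier divisors while perturbing, which is why one works with $qL - A$ and invokes homogeneity. The real content of Theorem \ref{thm:dfkl41} — that a Cartier divisor satisfying $(\ref{thm:dfkl41cond})$ is already nef — is deferred, and will be the hard part, carried out afterward by combining the base-locus estimate of Proposition \ref{prop:dfkl31}, the gamma construction (Theorem \ref{thm:gammaconstintro}, via Lemma \ref{lem:dfklfieldred}), and de Jong's alterations.
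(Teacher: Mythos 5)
Your argument is correct and is essentially the paper's own proof: the paper perturbs to the Cartier divisor $m(L-\delta A)$ for a rational $\delta\in(0,\varepsilon)$ with $m\delta\in\ZZ$, checks via homogeneity that it still satisfies $(\ref{thm:dfkl41cond})$, and concludes by adding back the ample part $\delta A$, which is your argument with $\delta = 1/q$ and $m = q$. No issues.
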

\begin{proof}
  Suppose $L$ is a Cartier divisor satisfying $(\ref{thm:dfkl41cond})$ for a
  pair $(A,\varepsilon)$.
  Choose $\delta \in (0,\varepsilon) \cap \QQ$ and let $m$ be a positive integer
  such that $m\delta$ is an integer.
  Then, the Cartier divisor $m(L-\delta A)$ is nef since
  \[
    \widehat{h}^i\bigl(X,m(L-\delta A) - tA\bigr)
    = \widehat{h}^i\bigl(X,mL - (t+m\delta)A\bigr)
    = m \cdot \widehat{h}^i\Bigl(X,L - \Bigl(\frac{t}{m}+\delta\Bigr)A\Bigr)
    = 0
  \]
  for all $t \in [0,m\varepsilon-\delta)$ by homogeneity (Proposition
  \ref{prop:nahomogcont}).
  Thus, the Cartier divisor $L = (L-\delta A) + \delta A$ is ample by
  \cite[Cor.\ 1.4.10]{Laz04a}.
\end{proof}
\par We will also need the following result to allow for an inductive proof.
Note that the proof in \cite{dFKL07} works in our setting.
\begin{citedlem}[{\cite[Lem.\ 4.3]{dFKL07}}]\label{lem:dfkl43}
  Let $X$ be a projective variety of dimension $n > 0$ over an uncountable
  field, and let $L$ be a Cartier divisor on $X$.
  Suppose $L$ satisfies $(\ref{thm:dfkl41cond})$ for a pair $(A,\varepsilon)$,
  and let $E \in \lvert A \rvert$ be a very general divisor.
  Then, the restriction $L\rvert_E$ satisfies $(\ref{thm:dfkl41cond})$ for the
  pair $(A\rvert_E,\varepsilon)$.
\end{citedlem}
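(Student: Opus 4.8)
The plan is to reproduce the restriction argument of de~Fernex, K\"uronya, and Lazarsfeld \cite[Lem.\ 4.3]{dFKL07} and to check that every ingredient is available over an arbitrary uncountable field. First I would reduce to the case where the divisor being restricted is Cartier. Fix $i > 0$. Both of the functions $\widehat{h}^i(E,-)$ appearing on the two sides of the desired identity $\widehat{h}^i(E,(L-tA)\rvert_E) = 0$ are homogeneous of degree $n-1$ and continuous on the finite-dimensional $\RR$-subspace of $\Div_\RR(E)$ spanned by $L\rvert_E$ and $A\rvert_E$ by Proposition \ref{prop:nahomogcont}; combined with the reductions recorded in Remark \ref{rem:kur291reductions}, it therefore suffices to prove the vanishing for every rational $t \in [0,\varepsilon)$, and after clearing denominators we may assume that $D \coloneqq L - tA$ is an honest Cartier divisor on $X$. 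The divisor $D$ still satisfies a condition of type $(\ref{thm:dfkl41cond})$: we have $\widehat{h}^j(X,D-sA) = \widehat{h}^j(X,L-(t+s)A) = 0$ for every $j > 0$ and every $s$ in some interval $[0,\varepsilon')$ with $\varepsilon' > 0$.

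Next, since $k$ is uncountable and $\lvert A \rvert$ is base-point free, I would choose $E \in \lvert A \rvert$ very general; the section $s_E \in H^0(X,\cO_X(A))$ cutting out $E$ then gives, for every integer $m > 0$, the restriction exact sequence
\[
  0 \longrightarrow \cO_X(mD - A) \xrightarrow{\ \cdot\, s_E\ } \cO_X(mD)
  \longrightarrow \cO_E\bigl((mD)\rvert_E\bigr) \longrightarrow 0,
\]
whose long exact cohomology sequence exhibits $h^i(E,(mD)\rvert_E)$ as the sum of the dimension of the cokernel of $H^i(X,\cO_X(mD-A)) \xrightarrow{s_E} H^i(X,\cO_X(mD))$ and the dimension of the kernel of $H^{i+1}(X,\cO_X(mD-A)) \xrightarrow{s_E} H^{i+1}(X,\cO_X(mD))$. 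The very generality of $E$ guarantees that all of these multiplication maps (over every $m$ at once, a countable collection of conditions) attain the generic rank among members of $\lvert A \rvert$, so that both kernels and cokernels are as small as possible; the remaining inputs are asymptotic estimates on $X$ for $h^j(X,\cO_X(mD))$ and for $h^j(X,\cO_X(mD-A)) = h^j\bigl(X,\cO_X(\lceil m(D-\tfrac1m A)\rceil)\bigr)$. This is exactly where $(\ref{thm:dfkl41cond})$ \emph{on the full interval} is used: because $\widehat{h}^j(X,D-sA) = 0$ for all $s$ in a neighbourhood of $0$, the shrinking perturbations $s = 1/m$ eventually fall inside the vanishing locus, and together with the elementary bound $h^j(E',\cO_{E'}(\text{powers})) = O(m^{n-1})$ for $(n-1)$-dimensional sections $E'$—used to absorb the twist by $-A$—one assembles, following \cite{dFKL07}, the estimate $h^i(E,(mD)\rvert_E) = o(m^{n-1})$. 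Hence $\widehat{h}^i(E,D\rvert_E) = 0$, and unwinding the reductions gives the lemma.

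The step I expect to be the main obstacle is precisely the drop in dimension from $n$ to $n-1$ when passing from $X$ to $E$. The naive estimate from the restriction sequence, $h^i(E,(mD)\rvert_E) \le h^i(X,\cO_X(mD)) + h^{i+1}(X,\cO_X(mD-A))$, only yields $o(m^n)$ by the hypothesis and the standard growth bound $h^\bullet(X,\cO_X(mD)) = O(m^n)$ \cite[Ex.\ 1.2.20]{Laz04a}; since $\widehat{h}^i(E,-)$ is normalised by $m^{n-1}/(n-1)!$, this is a full power of $m$ too weak. To recover that power one genuinely needs both the very general choice of $E$—so as to replace the cohomology of $X$ by the part that survives to $E$—and the vanishing of the $\widehat{h}^j(X,-)$ along an entire segment rather than at a single class; a one-point hypothesis would already be insufficient, cf.\ the discussion following Theorem \ref{thm:dfkl41}. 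Since neither resolution of singularities nor any vanishing theorem enters the argument, \cite[Lem.\ 4.3]{dFKL07} applies verbatim: the uncountability of $k$ is used only to produce the very general member $E$, and the invariance of the groups $h^i$ under the field extensions implicit in \cite{dFKL07} is supplied by flat base change.
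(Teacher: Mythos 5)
The paper itself does not reprove this lemma: it only records that the proof of \cite[Lem.\ 4.3]{dFKL07} carries over verbatim once the ground field is uncountable, and your closing observations (uncountability enters only through the choice of $E$ avoiding countably many proper closed subsets of $\lvert A\rvert$, semicontinuity and flat base change replace any appeal to $\CC$, and no resolutions or vanishing theorems are used) are exactly the content of that remark. The difficulty is with the middle of your argument, which is where a proof would actually have to happen, and there you have a genuine gap at precisely the step you yourself single out as the main obstacle.

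Concretely, the restriction sequence only gives $h^i\bigl(E,\cO_E((mD)\rvert_E)\bigr) \le h^i\bigl(X,\cO_X(mD)\bigr) + h^{i+1}\bigl(X,\cO_X(mD-A)\bigr)$, which is $o(m^n)$, one full power of $m$ short of the required $o(m^{n-1})$, and neither of your proposed remedies closes this. First, the inference that ``the shrinking perturbations $s=1/m$ eventually fall inside the vanishing locus'' conflates the vanishing of the asymptotic function at the class $D-\tfrac{1}{m}A$ --- which is a statement about a limsup over a second index $m'$ with $m$ fixed --- with a bound on the single group $h^{i+1}(X,\cO_X(mD-A))$; and even a correct bound of that shape (which one can get from the uniform estimates behind Proposition \ref{prop:nahomogcont}) again only yields $o(m^n)$. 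Second, an $O(m^{n-1})$ bound on cohomology of $(n-1)$-dimensional sections cannot ``absorb the twist by $-A$'': $O(m^{n-1})$ is not $o(m^{n-1})$, and arranging by very generality that the multiplication maps have generic rank gives no quantitative gain by itself. What is missing is the actual mechanism of \cite{dFKL07} by which the hypothesis along the whole segment $[0,\varepsilon)$, fed through on the order of $\varepsilon m$ restrictions to (countably many) very general members of $\lvert A\rvert$ whose restricted cohomology dimensions are independent of the member chosen, recovers the lost factor of $m$; a single restriction sequence, or vanishing at the single parameter $t$, provably cannot suffice --- already for surfaces the conclusion forces $(L-tA)\cdot A \ge 0$, and deducing this from the hypothesis genuinely uses the whole interval, consistent with the examples showing that one-point vanishing of $\widehat{h}^i$ implies nothing about positivity. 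As written, the decisive estimate $h^i\bigl(E,\cO_E((mD)\rvert_E)\bigr) = o(m^{n-1})$ is asserted (``one assembles, following \cite{dFKL07}'') rather than proved, so the proposal reduces to the citation and does not supply the argument whose portability it is supposed to verify.
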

\par We can now show the direction $\Leftarrow$ in Theorem \ref{thm:dfkl41}; by Lemma
\ref{lem:nonnefcounterex}, we need to show that every Cartier divisor satisfying
$(\ref{thm:dfkl41cond})$ is nef.
Recall that by Lemma \ref{lem:dfklfieldred}, we may assume that the ground field
$k$ is uncountable, and in positive characteristic, we may assume that $k$ is
$F$-finite as well.
Our proof follows that in \cite[pp.\ 450--454]{dFKL07} after reducing to a
setting where Proposition \ref{prop:dfkl31} applies, although we have to
be more careful in positive characteristic.
\begin{proof}[Proof of $\Leftarrow$ in Theorem \ref{thm:dfkl41}]
  We proceed by induction on $\dim X$.
  Suppose $\dim X = 1$; we will show the contrapositive.
  If $L$ is not nef, then $\deg L < 0$ and $-L$ is ample.
  Thus, by asymptotic Serre duality (Proposition \ref{prop:asympoticserre}), we
  have $\widehat{h}^1(X,L) = \widehat{h}^0(X,-L) \ne 0$,
  hence $(\ref{thm:dfkl41cond})$ does not hold for every choice of
  $(A,\varepsilon)$.
  \par We now assume $\dim X \ge 2$.
  Suppose by way of contradiction that there is a non-nef Cartier
  divisor $L$ satisfying $(\ref{thm:dfkl41cond})$.
  We first claim that 
  there exists a finite morphism $\nu\colon \widetilde{X} \to X$ such that
  $\nu^*L$ satisfies $(\ref{thm:dfkl41cond})$, and such that $\widetilde{X}$
  satisfies the hypotheses of Proposition \ref{prop:dfkl31} for
  $D = \nu^*L$.
  Choose an integral curve $Z \subset X$ such that $L \cdot Z < 0$, and let
  $\varphi\colon X' \to X$ be a regular alteration for the pair $(X,Z)$
  \cite[Thm.\ 4.1]{dJ96}, in which case $(\varphi^{-1}(Z))_\red$ is a
  simple normal crossing divisor.
  Consider the Stein factorization \cite[Cor.\ III.11.5]{Har77}
  \begin{equation}\label{eq:step2factor}
    \begin{tikzcd}
      X' \rar{\mu}\arrow{dr}[swap]{\varphi} & \widetilde{X}\dar{\nu}\\
      & X
    \end{tikzcd}
  \end{equation}
  for the morphism $\varphi$, in which case $\widetilde{X}$ is a normal
  projective variety.
  Now let $\widetilde{Z}$ be the scheme-theoretic inverse image of $Z$ under
  $\nu$, and write
  \[
    \widetilde{Z} = \bigcup_\alpha \widetilde{Z}_\alpha
  \]
  where $\widetilde{Z}_\alpha$ are the irreducible components of
  $\widetilde{Z}$.
  Since $\nu$ is finite, every $\widetilde{Z}_\alpha$ is one-dimensional and
  dominates $Z$, hence the projection formula and \eqref{eq:nonredintersection}
  imply $\nu^*L \cdot \widetilde{Z}_\alpha < 0$.
  Finally, $(\varphi^{-1}(Z))_\red = (\mu^{-1}(\widetilde{Z}))_\red$ is a simple
  normal crossing divisor by the factorization \eqref{eq:step2factor}.
  \par We now show that $\nu^*L$ satisfies $(\ref{thm:dfkl41cond})$.
  Since $\nu^*A$ is ample \cite[Prop.\ 1.2.13]{Laz04a}, we can choose a
  positive integer $a$ such that $a\,\nu^*A$ is very ample.
  Then, Proposition \ref{prop:kur291} implies
  \begin{equation}\label{eq:pullbacklsatisfiesstar}
    \widehat{h}^i(\widetilde{X},\nu^*L-ta\,\nu^*A)
    = (\deg\nu) \cdot \widehat{h}^i(X,L-taA) = 0
  \end{equation}
  for all $i > 0$ and for all $t \in [0,\varepsilon/a)$.
  Replacing $A$ by $aA$, we will assume that $\nu^*A$ is very ample.
  \medskip
  \par For the rest of the proof, our goal is to show that
  \begin{equation}\label{eq:dfklgoal}
    \widehat{h}^1(\widetilde{X},\nu^*L-\delta\nu^*A) \ne 0
  \end{equation}
  for $0 < \delta \ll 1$, contradicting \eqref{eq:pullbacklsatisfiesstar}.
  Let $F \in \lvert \nu^*A \rvert$ be a very general divisor.
  By Bertini's theorem \cite[Thm.\ 3.4.10 and Cor.\ 3.4.14]{FOV99}, we may
  assume that $F$ is a subvariety of $\widetilde{X}$, in which case by
  inductive hypothesis and Lemma \ref{lem:dfkl43}, we have that $\nu^*L\rvert_F$
  is ample.
  Since ampleness is an open condition in families \cite[Cor.\ 9.6.4]{EGAIV3},
  there exists an integer $b > 0$ such that $b\,\nu^*L$ is very ample along the
  generic divisor $F_\eta \in \lvert \nu^*A \rvert$.
  By possibly replacing $b$ with a multiple, we may also assume that
  $mb\,\nu^*L\rvert_{F_\eta}$ has vanishing higher cohomology for every integer
  $m > 0$.
  Since the ground field $k$ is uncountable, we can then choose a sequence of
  very general Cartier divisors $\{E_\beta\}_{\beta=1}^\infty \subseteq \lvert
  \nu^*A \rvert$ such that the following properties hold:
  \begin{enumerate}[label=$(\alph*)$]
    \item $E_\beta$ is a subvariety of $\widetilde{X}$ for all $\beta$
      (by Bertini's theorem \cite[Thm.\ 3.4.10 and Cor.\ 3.4.14]{FOV99});
    \item For all $\beta$, $b\,\nu^*L\rvert_{E_\beta}$ is very ample
      and $mb\,\nu^*L\rvert_{E_\beta}$ has vanishing higher cohomology for every
      integer $m > 0$ (by the constructibility of very ampleness in families
      \cite[Prop.\ 9.6.3]{EGAIV3} and by semicontinuity); and
    \item For every positive integer $r$ and for all non-negative integers
      $j$ and $m$, the $k$-dimension of cohomology groups
      of the form
      \begin{equation}\label{eq:indepcohgp}
        H^j\bigl(E_{\beta_1} \cap E_{\beta_2} \cap \cdots \cap
          E_{\beta_r},\cO_{E_{\beta_1} \cap E_{\beta_2} \cap \cdots \cap
        E_{\beta_r}}(mL)\bigr)
      \end{equation}
      is independent of the $r$-tuple $(\beta_1,\beta_2,\ldots,\beta_r)$
      (by semicontinuity; see \cite[Prop.\
      5.5]{Kur06}).
  \end{enumerate}
  We will denote by $h^j(\cO_{E_1 \cap E_2 \cap \cdots \cap E_r}(mL))$ the
  dimensions of the cohomology groups \eqref{eq:indepcohgp}.
  By homogeneity (Proposition \ref{prop:nahomogcont}), we can
  replace $L$ by $bL$ so that $\nu^*L\rvert_{E_\beta}$ is very ample
  with vanishing higher cohomology for all $\beta$.
  \par To show \eqref{eq:dfklgoal}, we now follow the proof in
  \cite[pp.\ 453--454]{dFKL07} with appropriate modifications.
  Given positive integers $m$ and $r$, consider the complex
  \begin{align*}
    K^\bullet_{m,r} \coloneqq{}& \biggl(
    \bigotimes_{\beta=1}^r\bigl(\cO_{\widetilde{X}} \longrightarrow
    \cO_{E_\beta}\bigr) \biggr) \otimes \cO_{\widetilde{X}}(m\,\nu^*L)\\
    ={}& \biggl\{ \cO_{\widetilde{X}}(m\,\nu^*L) \longrightarrow
    \bigoplus_{\beta=1}^r \cO_{E_\beta}(m\,\nu^*L) \longrightarrow \bigoplus_{1
    \le \beta_1 < \beta_2 \le r} \cO_{E_{\beta_1} \cap E_{\beta_2}}(m\,\nu^*L)
    \longrightarrow \cdots \biggr\}.
  \end{align*}
  By \cite[Cor.\ 4.2]{Kur06}, this complex is acyclic away from
  $\cO_{\widetilde{X}}(m\,\nu^*L)$, hence is a resolution for
  $\cO_{\widetilde{X}}(m\,\nu^*L - r\,\nu^*A)$.
  In particular, we have
  \[
    H^j\bigl(\widetilde{X},\cO_{\widetilde{X}}(m\,\nu^*L - r\,\nu^*A)\bigr) =
    \HH^j(\widetilde{X},K^\bullet_{m,r}).
  \]
  The right-hand side is computed by an $E_1$-spectral sequence whose first page
  is
  \[
    \begin{tikzpicture}
      \matrix (m) [matrix of math nodes,commutative diagrams/every cell,column
      sep=1.8em,transform shape,nodes={scale=0.95}]{
        \strut\makebox[1em][c]{$\mathllap{E_1}$} &[-1em] &
        \hphantom{\bigoplus\limits_{\beta=1}^r
        H^0\bigl(\cO_{E_\beta}(m\,\nu^*L)\bigr)}
        & &[-1.75em]
        &[-1.75em] {}\\[-0.75em]
        \vdots & \vdots & & & \\
        2 & \vphantom{\bigoplus\limits^r}
        H^2\bigl(\cO_{\widetilde{X}}(m\,\nu^*L)\bigr)\\
        1 & \vphantom{\bigoplus\limits^r}
        H^1\bigl(\cO_{\widetilde{X}}(m\,\nu^*L)\bigr) &
        & & & \vphantom{\bigoplus\limits^r}\hphantom{\cdots}\\
        0 & H^0\bigl(\cO_{\widetilde{X}}(m\,\nu^*L)\bigr)
        \vphantom{\bigoplus\limits_{\beta=1}^r} & \bigoplus\limits_{\beta=1}^r
        H^0\bigl(\cO_{E_\beta}(m\,\nu^*L)\bigr) & \bigoplus\limits_{1
        \le \beta_1 < \beta_2 \le r} H^0\bigl(\cO_{E_{\beta_1} \cap
        E_{\beta_2}}(m\,\nu^*L)\bigr) & \cdots
        &\vphantom{\bigoplus\limits_{\beta=1}^r}\\
        \quad\strut & 0 & 1 & 2 & \cdots&\strut \\};
      \draw[<-] (m-1-1.east) node[anchor=south]{$q$} -- (m-6-1.east);
      \draw[->] (m-6-1.north) -- (m-6-6.north) node[anchor=west]{$p$};
      \draw (m-1-3.west) |- (m-4-6.south west);
      \draw[draw=none,fill=gray,fill opacity=0.2] (m-1-3.west) rectangle
        node[midway,opacity=1] {\Large $0$} (m-4-6.south west);
      \path[commutative diagrams/.cd, every arrow, every label]
        (m-5-2) edge node {$v_{m,r}$} (m-5-3)
        (m-5-3) edge node {$u_{m,r}$} (m-5-4);
    \end{tikzpicture}
  \]
  hence there is a natural inclusion
  \begin{equation}\label{eq:dfkl41incl}
    \frac{\ker(u_{m,r})}{\im(v_{m,r})} \subseteq
    H^1\bigl(\widetilde{X},\cO_{\widetilde{X}}(m\,\nu^*L - r\,\nu^*A)\bigr).
  \end{equation}
  \par We want to bound the left-hand side of \eqref{eq:dfkl41incl} from
  below.
  First, there exists a constant $C_1 > 0$ such that $h^0(\cO_{E_1\cap
  E_2}(m\,\nu^*L)) \le C_1 \cdot m^{n-2}$ for all $m \gg 0$ \cite[Ex.\
  1.2.20]{Laz04a}.
  Thus, we have
  \[
    \codim\Bigl(\ker(u_{m,r}) \subseteq \bigoplus\limits_{\beta=1}^r
    H^0\bigl(E_\beta,\cO_{E_\beta}(m\,\nu^*L)\bigr) \Bigr)
    \le C_2 \cdot r^2m^{n-2}
  \]
  for some $C_2$ and for all $m \gg 0$.
  Now by Proposition \ref{prop:dfkl31}, there are positive integers $q$ and
  $c$ such that $\fb(\lvert mq\,\nu^*L \rvert) \subseteq \fa^{m-c}$
  for all $m > c$, where $\fa$ is the ideal sheaf of $\widetilde{Z}$.
  By replacing $L$ by $qL$, we can assume that this
  inclusion holds for $q = 1$.
  The morphism $v_{m,r}$ therefore fits into the following commutative diagram:
  \[
    \begin{tikzcd}
      H^0\bigl(\widetilde{X},\cO_{\widetilde{X}}(m\,\nu^*L) \otimes
      \fa^{m-c}\bigr) \rar{v'_{m,r}} \dar[equals]
      & \operatorname{\smash{\displaystyle\bigoplus\limits_{\beta=1}^r}}
      H^0\bigl(E_\beta,\cO_{E_\beta}(m\,\nu^*L) \otimes
      \fa^{m-c}\bigr) \dar[hook]\\
      H^0\bigl(\widetilde{X},\cO_{\widetilde{X}}(m\,\nu^*L)\bigr)
      \rar{v_{m,r}} &
      \operatorname{\smash{\displaystyle\bigoplus\limits_{\beta=1}^r}}
      H^0\bigl(E_\beta,\cO_{E_\beta}(m\,\nu^*L)\bigr)\\[-1em]
    \end{tikzcd}
  \]
  \par We claim that there exists a constant $C_3 > 0$ such that for all $m \gg
  0$,
  \begin{equation}\label{eq:dfkl15}
    \codim\Bigl(H^0\bigl(E_\beta,\cO_{E_\beta}(m\,\nu^*L) \otimes
      \fa^{m-c}\bigr) \subseteq H^0\bigl(E_\beta,\cO_{E_\beta}(m\,\nu^*L)\bigr)
    \Bigr) \ge C_3 \cdot m^{n-1}.
  \end{equation}
  Granted this, we have
  \[
    \dim\biggl( \frac{\ker(u_{m,r})}{\im(v_{m,r})} \biggr) \ge C_4 \cdot \bigl(
    rm^{n-1} - r^2m^{n-2}\bigr)
  \]
  for some constant $C_4 > 0$ and for all $m \gg 0$.
  Fixing a rational number $0 < \delta \ll 1$ and setting $r = m\delta$ for an
  integer $m > 0$ such that $m\delta$ is an integer, we then see that
  there exists a constant $C_5 > 0$ such that
  \[
    h^1\bigl(\widetilde{X},\cO_{\widetilde{X}}\bigl(m(\nu^*L - \delta
    \,\nu^*A) \bigr)\bigr) \ge C_5 \cdot \delta m^n
  \]
  for all $m \gg 0$, contradicting \eqref{eq:pullbacklsatisfiesstar}.
  \par It remains to show \eqref{eq:dfkl15}.
  Since the vanishing locus of $\fa$ may have no $k$-rational points, we will
  pass to the algebraic closure of $k$ to bound the codimension on
  the left-hand side of \eqref{eq:dfkl15} from below.
  Let $\overline{E}_\beta \coloneqq E_\beta \times_k \overline{k}$, and denote
  by $\pi\colon \overline{E}_\beta \to E_\beta$ the projection
  morphism.
  Note that
  \begin{align*}
    \MoveEqLeft[2] \codim\Bigl(H^0\bigl(E_\beta,\cO_{E_\beta}(m\,\nu^*L)
    \otimes \fa^{m-c}\bigr) \subseteq
  H^0\bigl(E_\beta,\cO_{E_\beta}(m\,\nu^*L)\bigr) \Bigr)\\
    &= \codim\Bigl(H^0\bigl(\overline{E}_\beta,\cO_{\overline{E}_\beta}(m\,
      \pi^*\nu^*L) \otimes \pi^{-1}\fa^{m-c} \cdot \cO_{\overline{E}_\beta}
      \bigr) \subseteq H^0\bigl(\overline{E}_\beta,\cO_{\overline{E}_\beta}(m\,
      \pi^*\nu^*L)\bigr)\Bigr)
  \intertext{by the flatness of $k \subseteq \overline{k}$.
  Since $\cO_{\overline{E}_\beta}(\pi^*\nu^*L)$ is very ample by base change, we
  can choose a closed point $x \in Z(\pi^{-1}\fa \cdot \cO_{\overline{E}_\beta})
  \cap \overline{E}_\beta$, in which case
  $m\,\pi^*\nu^*L$ separates $(m-c)$-jets at $x$ by \cite[Proof of Lem.\
  3.7]{Ito13}.
  Finally, the dimension of the space of $(m-c)$-jets at $x$ is at least that
  for a regular point of a variety of dimension $n$, hence}
    \MoveEqLeft[2] \codim\Bigl(H^0\bigl(\overline{E}_\beta,
    \cO_{\overline{E}_\beta}(m\,\pi^*\nu^*L) \otimes \pi^{-1}\fa^{m-c} \cdot
    \cO_{\overline{E}_\beta} \bigr) \subseteq
    H^0\bigl(\overline{E}_\beta,\cO_{\overline{E}_\beta}(m\,\pi^*\nu^*L)\bigr)
    \Bigr)\\
    &\ge \codim\Bigl(H^0\bigl(\overline{E}_\beta,\cO_{\overline{E}_\beta}(m\,
      \pi^*\nu^*L) \otimes \fm_x^{m-c} \cdot \cO_{\overline{E}_\beta} \bigr)
      \subseteq H^0\bigl(\overline{E}_\beta,\cO_{\overline{E}_\beta}(m\,
      \pi^*\nu^*L)\bigr)\Bigr)\\
      &\ge \binom{m-c+n}{n-1} \ge C_3 \cdot m^{n-1}
  \end{align*}
  for some constant $C_3 > 0$ and all $m \gg 0$, as required.
\end{proof}

\section{Nakayama's theorem on restricted base loci}\label{sect:nakayamabminus}
We now come to our second application of the gamma construction, Theorem
\ref{thm:bbpconj}.
This result extends known cases of the following conjecture due to Boucksom,
Broustet, and Pacienza.
\begin{citedconj}[{\cite[Conj.\ 2.7]{BBP13}}]
  Let $X$ be a normal projective variety, and let $D$ be a pseudoeffective
  $\RR$-divisor on $X$.
  Then, we have $\Bminus(D) = \NNef(D)$.
\end{citedconj}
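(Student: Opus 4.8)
The plan is to prove the two inclusions separately. The inclusion $\NNef(D)\subseteq\Bminus(D)$ holds for every pseudoeffective $\RR$-Cartier divisor on every normal projective variety, with no hypothesis on singularities or on the ground field. Fix a very ample $A$ and a divisorial valuation $v$ with $\sigma_v(D)>0$, realized by a prime divisor $E$ on a proper birational model $g\colon Y\to X$. For each $\varepsilon>0$ the divisor $D+\varepsilon A$ is big, and $\varepsilon\mapsto\sigma_v(D+\varepsilon A)$ is non-increasing with limit $\sigma_v(D)$ as $\varepsilon\to0^+$ by the properties of $\sigma_v$ recalled in \S\ref{sect:bminus}; hence $\sigma_v(D+\varepsilon A)>0$ for all small $\varepsilon$. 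Pulling sections of $\lvert m(D+\varepsilon A)\rvert$ back along $g$ shows they all vanish along $E$ once $m\gg0$, so the center of $v$ on $X$ lies in $\Bs\lvert m(D+\varepsilon A)\rvert$ for $m\gg0$, hence in $\SB(D+\varepsilon A)\subseteq\Bminus(D)$; taking the union over such $v$ gives $\NNef(D)\subseteq\Bminus(D)$.

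For the reverse inclusion $\Bminus(D)\subseteq\NNef(D)$, I would first reduce the ground field. When $\Char k=p>0$, the gamma construction (Theorem \ref{thm:gammaconstintro}) produces a purely inseparable $F$-finite extension $k\subseteq k^\Gamma$ for which $\pi^\Gamma\colon X^\Gamma=X\times_k k^\Gamma\to X$ is a homeomorphism identifying strongly $F$-regular loci; then $X^\Gamma$ is again a normal projective variety with at most zero-dimensional non-strongly-$F$-regular locus, $(\pi^\Gamma)^{*}D$ is pseudoeffective by faithfully flat base change, and $\pi^\Gamma$ is compatible with the formation of $\Bminus$ (flat base change on $H^0$, hence on base loci) and of $\NNef$ (the homeomorphism induces a bijection of divisorial valuations preserving $\sigma_v$), so one reduces to $k$ being $F$-finite. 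When $\Char k=0$ one may argue directly, since log resolutions and multiplier ideals are available over any field of characteristic $0$, or descend along the faithfully flat morphism $X_{\overline{k}}\to X$ --- using that in characteristic $0$ normality and klt singularities persist under base field extension --- to reduce to $k$ algebraically closed and invoke \cite[Lem.\ V.1.9(1)]{Nak04} and \cite[Cor.\ 4.9]{CDB13}.

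So assume $k$ is $F$-finite (resp.\ algebraically closed of characteristic $0$), and let $S$ be the non-strongly-$F$-regular (resp.\ non-klt) locus, a finite set of closed points. Since $\NNef(D)\subseteq\Bminus(D)$, it remains to show that every $x\in\Bminus(D)$ lies in $\NNef(D)$. If $x\notin S$, I would argue as in Proposition \ref{prop:dfkl31}: by the monotonicity of $\SB(D+\varepsilon A)$ in $\varepsilon$ it suffices to produce, for all small $\varepsilon>0$ and all $m\gg0$, a section of $\cO_X(\lceil m(D+\varepsilon A)\rceil)$ not vanishing at $x$; since $X$ is strongly $F$-regular at $x$ one has $\tau(\cO_X)_x=\cO_{X,x}$, and since $x\notin\NNef(D)$ the valuative description of asymptotic test ideals forces $\tau(X,\lVert D+\varepsilon A\rVert)_x=\cO_{X,x}$ for all small $\varepsilon$, so uniform global generation for asymptotic test ideals (as invoked via \cite[Prop.\ 4.1]{Sat18} in Proposition \ref{prop:dfkl31}), after absorbing the auxiliary ample twist into the available $\varepsilon A$, produces such a section. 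In characteristic $0$ one runs the identical argument with $\cJ(X,\lVert D+\varepsilon A\rVert)$ in place of the test ideal and the global generation statements of \cite{Laz04b}, cf.\ Remark \ref{rmk:dfkl31char0}. For $x\in S$, I would follow the local analysis of \cite[\S4]{CDB13} (resp.\ \cite[\S4]{Sat18}): pass to a resolution (resp.\ a regular alteration, \cite[Thm.\ 4.1]{dJ96}), and use the already-known regular case (\cite[Lem.\ V.1.9(1)]{Nak04} resp.\ \cite[Thm.\ 7.2]{Mus13}) together with the zero-dimensionality of $S$ to exhibit a divisorial valuation centered at $x$ with positive $\sigma$.

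I expect this last step to be the main obstacle: controlling $\Bminus$ and $\NNef$ at the finitely many points of $S$, and --- closely related --- upgrading ``$x\notin\NNef(D)$'' to ``$\tau(X,\lVert D+\varepsilon A\rVert)_x=\cO_{X,x}$ for all small $\varepsilon$'' uniformly over the infinitely many divisorial valuations centered at $x$. Everything else amounts to the base-change bookkeeping above together with the asymptotic test ideal machinery already developed in \S\ref{sect:dfkl}.
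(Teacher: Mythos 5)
The statement you were asked to prove is \cite[Conj.\ 2.7]{BBP13} exactly as the paper cites it: it is an \emph{open conjecture}, and the paper does not prove it. What the paper proves is Theorem \ref{thm:bbpconj} (via Theorem \ref{thm:bbpconjwloci}), namely the special case in which the non-klt locus, resp.\ the non-strongly-$F$-regular locus, is at most zero-dimensional. Your proposal silently imports that hypothesis: in your third paragraph you ``let $S$ be the non-strongly-$F$-regular (resp.\ non-klt) locus, a finite set of closed points,'' but nothing in the conjecture guarantees this---for a general normal projective variety these loci can be positive-dimensional, and that is precisely where the conjecture is still open (even over $\CC$). So even if every step were completed, the argument would establish only the known special case, not the stated conjecture; the genuine missing idea is a technique that controls $\Bminus(D)$ along a positive-dimensional non-klt (or non-strongly-$F$-regular) locus, which neither the gamma-construction reduction nor the multiplier/test-ideal machinery you invoke provides.

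Even read as a proof of Theorem \ref{thm:bbpconj}, two steps diverge from what actually works. First, your treatment of points $x \in S$ (``exhibit a divisorial valuation centered at $x$ with positive $\sigma$'' by local analysis on a resolution or alteration) is not how the paper closes the argument, and it is unclear how it could: the paper avoids any analysis at $S$ by writing $\Bminus(D) = \bigcup_{n} \SB(D+A_n)$ and using that stable base loci contain no isolated points \cite[Prop.\ 1.1]{ELMNP09}, so $\Bminus(D)$ has no isolated points, while $S$ is by hypothesis a discrete set of isolated closed points; combined with $\Bminus(D) \smallsetminus S \subseteq \NNef(D)$ (Theorem \ref{thm:bbpconjwloci}) this finishes the proof. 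Second, your step ``since $x \notin \NNef(D)$ the valuative description forces $\tau(X,\lVert D+\varepsilon A\rVert)_x = \cO_{X,x}$''---which you yourself flag as the main obstacle---runs the implication in the opposite direction from what is available: the paper instead starts from $x \in \Bminus((\pi^\Gamma)^*D)$, places $(\pi^\Gamma)^{-1}(x)$ in the zero locus of some asymptotic test ideal by \cite[Cor.\ 4.6]{Sat18}, and then deduces $v(\lVert D\rVert) > 0$ via \cite[Prop.\ 3.17]{Sat18}, reducing first to $D$ big and $\QQ$-Cartier and applying the gamma construction simultaneously to $X$, a birational model $X'$, and the divisor $E$ realizing $v$ so that the valuation and the loci descend correctly. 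Your first inclusion $\NNef(D) \subseteq \Bminus(D)$ and the broad reduction-to-$F$-finite strategy do match the paper.
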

\par In \S\ref{sect:bminus}, we define the restricted base locus $\Bminus(D)$
and the non-nef locus $\NNef(D)$.
We then prove Theorem \ref{thm:bbpconj} in \S\ref{sect:proofbbp} using the
gamma construction (Theorem \ref{thm:gammaconst}) to reduce to the $F$-finite
case, in which case it suffices to apply results in \cite{Sat18}.
We recall that $\lVert \cdot \rVert$ denotes a compatible choice of Euclidean
norms on the vector spaces $N^1_\QQ(X)$ and $N^1_\RR(X)$, which are finite
dimensional for complete schemes $X$ by \cite[Prop.\ 2.3]{Cut15}.
\subsection{Background on restricted base loci and non-nef loci}
\label{sect:bminus}
We start by defining the following numerically invariant approximation of the
stable base locus defined in Definition \ref{def:stablebaselocus}.
\begin{citeddef}[{\cite[Def.\ 1.12]{ELMNP06}}]
  Let $X$ be a projective scheme over a field, and let $D$ be an
  $\RR$-Cartier divisor on $X$.
  The \textsl{restricted base locus} of $D$ is the subset
  \[
    \Bminus(D) \coloneqq \bigcup_A \SB(D+A)
  \]
  of $X$, where the union runs over all ample $\RR$-Cartier
  divisors $A$ such that $D + A$ is a $\QQ$-Cartier divisor.
  Note that $\Bminus(D) = \emptyset$ if and only if $D$ is nef \cite[Ex.\
  1.18]{ELMNP06}.
\end{citeddef}
\par We will need the following result, which says that the formation of restricted
base loci is compatible with ground field extensions.
\begin{lemma}\label{lem:bminusfieldext}
  Let $X$ be a projective scheme over a field $k$, and let $D$ be an
  $\RR$-Cartier divisor on $X$.
  Let $k \subseteq k'$ be a field extension with corresponding projection
  morphism $\pi\colon X \times_k k' \to X$.
  Then,
  \[
    \Bminus(\pi^*D) = \pi^{-1}\bigl(\Bminus(D)\bigr).
  \]
\end{lemma}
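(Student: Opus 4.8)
The plan is to reduce the statement to the corresponding statement for stable base loci of $\QQ$-Cartier divisors, for which compatibility with ground field extensions is comparatively easy. Recall that $\Bminus(D) = \bigcup_A \SB(D+A)$, where $A$ runs over ample $\RR$-Cartier divisors with $D+A$ a $\QQ$-Cartier divisor, and similarly $\Bminus(\pi^*D) = \bigcup_{A'} \SB(\pi^*D + A')$ over ample $\RR$-Cartier divisors $A'$ on $X \times_k k'$ with $\pi^*D + A'$ a $\QQ$-Cartier divisor. So I first need to understand how $\SB$ of a $\QQ$-Cartier divisor transforms under $\pi$.

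\emph{Step 1: Stable base loci of integral (or $\QQ$-) Cartier divisors.} For an integral Cartier divisor $E$ on $X$ and each $m > 0$, the base ideal $\fb(|mE|)$ is the image of the evaluation map $H^0(X,\cO_X(mE)) \otimes_k \cO_X(-mE) \to \cO_X$. Since $k \subseteq k'$ is faithfully flat, flat base change gives $H^0(X \times_k k', \cO_{X \times_k k'}(m\pi^*E)) \simeq H^0(X,\cO_X(mE)) \otimes_k k'$, and pulling back the evaluation map shows $\pi^{-1}\fb(|mE|)\cdot\cO_{X \times_k k'} = \fb(|m\pi^*E|)$. Hence $\Bs(|m\pi^*E|) = \pi^{-1}(\Bs(|mE|))$ as closed subschemes, and taking underlying reduced subsets and intersecting over all $m$ (using that $\pi$ is a homeomorphism onto its image in the sense of commuting with preimages of arbitrary intersections of closed subsets — here one uses that $\pi$ is quasi-compact, or simply that $X$ is noetherian so the intersection stabilizes) yields $\SB(\pi^*E) = \pi^{-1}(\SB(E))$. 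The same holds for $\QQ$-Cartier $E$ by clearing denominators, as in Definition \ref{def:stablebaselocus}.

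\emph{Step 2: Passing to the union over ample perturbations.} For the inclusion $\pi^{-1}(\Bminus(D)) \subseteq \Bminus(\pi^*D)$, given an ample $\RR$-Cartier $A$ on $X$ with $D+A$ a $\QQ$-Cartier divisor, the pullback $\pi^*A$ is ample (flat base change and Serre's criterion, or \cite[Prop.\ 1.2.13]{Laz04a}) and $\pi^*D + \pi^*A = \pi^*(D+A)$ is $\QQ$-Cartier, so by Step 1, $\pi^{-1}(\SB(D+A)) = \SB(\pi^*(D+A)) \subseteq \Bminus(\pi^*D)$; taking the union over $A$ gives the inclusion. For the reverse inclusion $\Bminus(\pi^*D) \subseteq \pi^{-1}(\Bminus(D))$, the issue is that a given ample $\RR$-Cartier divisor $A'$ on $X \times_k k'$ need not be a pullback from $X$. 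I would handle this using that $\Bminus$ can be computed via a decreasing sequence of perturbations: $\Bminus(\pi^*D) = \bigcup_{j} \SB(\pi^*D + A_j')$ for any sequence of ample classes $A_j'$ with $\lVert A_j' \rVert \to 0$ (cf.\ \cite[Prop.\ 1.19]{ELMNP06}, whose proof is numerical and works over any field), and then observe that we may choose the $A_j'$ to be pullbacks of ample $\RR$-Cartier divisors on $X$ — for instance $A_j' = \pi^*(\tfrac1j H)$ for a fixed ample Cartier divisor $H$ on $X$, noting $\pi^*$ on Néron–Severi classes is compatible with numerical equivalence. With this choice each $\SB(\pi^*D + \pi^*(\tfrac1j H)) = \pi^{-1}(\SB(D + \tfrac1j H)) \subseteq \pi^{-1}(\Bminus(D))$, and the union is $\pi^{-1}(\Bminus(D))$.

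\emph{Main obstacle.} The only genuinely delicate point is the reverse inclusion in Step 2: showing that it suffices to test $\Bminus(\pi^*D)$ against perturbations pulled back from $X$, rather than against all ample $\RR$-Cartier divisors on $X \times_k k'$. This rests on the characterization of $\Bminus$ via any sequence of ample perturbations tending to zero, which in turn uses that $\SB(D+A) \subseteq \SB(D+A')$ whenever $A - A'$ is ample (so the stable base loci increase as the perturbation shrinks) together with the noetherianity of $X$; I expect this to go through without change over an arbitrary field, but it is the step that requires care. Everything else is formal manipulation of base ideals under faithfully flat base change together with the fact, already used repeatedly in this paper, that $\pi$ is a faithfully flat universal homeomorphism-type morphism compatible with the relevant cohomology.
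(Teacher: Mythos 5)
Your proposal is correct and takes essentially the same route as the paper: both prove the statement by combining flat base change for base ideals (giving $\pi^{-1}(\SB(D+A)) = \SB(\pi^*(D+A))$ for $\QQ$-Cartier perturbations) with \cite[Prop.\ 1.19]{ELMNP06}, applied on $X$ and again on $X \times_k k'$ to a sequence of ample perturbations pulled back from $X$, which handles your ``main obstacle'' in one stroke and makes the separate treatment of the easy inclusion unnecessary. One small correction: the illustrative choice $A_j' = \pi^*(\tfrac1j H)$ only works when $D$ is itself $\QQ$-Cartier, since otherwise $\pi^*D + A_j'$ is not a $\QQ$-Cartier divisor and $\SB$ of it is undefined; instead take $A_j' = \pi^*A_j$ for a sequence of ample $\RR$-Cartier divisors $A_j$ on $X$ with $D+A_j$ $\QQ$-Cartier and $\lVert A_j \rVert \to 0$, exactly as in the paper.
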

\begin{proof}
  Let $\{A_n\}_{n \ge 1}$ be a sequence of ample $\RR$-Cartier divisors such
  that $\lim_{n\to\infty}\lVert A_n \rVert = 0$ and such that $D+A_n$ is a
  $\QQ$-Cartier divisor for every $n$.
  By \cite[Prop.\ 1.19]{ELMNP06}, we have
  \[
    \Bminus(D) = \bigcup_{n \ge 1} \SB(D+A_n).
  \]
  By flat base change, we have $\pi^{-1}(\SB(D+A_n)) = \SB(\pi^*(D+A_n))$, hence
  \[
    \pi^{-1}\bigl(\Bminus(D)\bigr) = \bigcup_{n \ge 1}
    \SB\bigl(\pi^*(D+A_n)\bigr) = \Bminus(\pi^*D),
  \]
  where the second equality follows from applying \cite[Prop.\ 1.19]{ELMNP06}
  again to the sequence $\{\pi^*A_n\}_{n \ge 1}$ of ample $\RR$-Cartier divisors
  on $X \times_k k'$.
\end{proof}
\par Next, we want to define the non-nef locus.
\begin{citeddef}[{\citeleft\citen{Nak04}\citemid Def.\
  III.2.2\citepunct\citen{CDB13}\citemid Def.\ 2.11\citeright}]
  Let $X$ be a normal projective variety, and let $D$ be a big $\RR$-Cartier
  divisor.
  Consider a divisorial valuation $v$ on $X$.
  The \textsl{numerical vanishing order} of $D$ along $v$ is
  \[
    v_{\num}(D) \coloneqq \inf_{E \equiv_\RR D} v(E),
  \]
  where the infimum runs over all effective $\RR$-Cartier divisors
  $\RR$-numerically equivalent to $D$.
  When $D$ is a pseudoeffective $\RR$-Cartier divisor, we set
  \[
    v_{\num}(D) \coloneqq \sup_A v_{\num}(D + A),
  \]
  where the supremum runs over all ample $\RR$-Cartier divisors $A$ on $X$, and
  where we note that $D+A$ is a big $\RR$-Cartier divisor by \cite[Thm.\
  2.2.26]{Laz04a}.
  The \textsl{non-nef locus} of a pseudoeffective $\RR$-Cartier divisor $D$
  is
  \[
    \NNef(D) \coloneqq \bigcup_v c_X(v)
  \]
  where the union runs over all divisorial valuations such that $v_\num(D) > 0$,
  and $c_X(v)$ is the center of the divisorial valuation $v$.
  Note that $\NNef(D) = \emptyset$ if and only if $D$ is nef \cite[Rem.\
  III.2.8]{Nak04}.
\end{citeddef}
\par To prove Theorem \ref{thm:bbpconj}, we will also use the following:
\begin{citeddef}[{\cite[Def.\ 2.2 and Rem.\ 2.3]{ELMNP06}}]
  Let $X$ be a normal projective variety, and let $D$ be a $\QQ$-Cartier
  divisor.
  Consider a divisorial valuation $v$ on $X$.
  The \textsl{asymptotic order of vanishing} of $D$ along $v$ is
  \[
    v\bigl(\lVert D \rVert\bigr) \coloneqq \inf_{E \sim_\QQ D} v(E)
  \]
  where the infimum runs over all effective $\QQ$-Cartier divisors
  $\QQ$-linearly equivalent to $D$.
\end{citeddef}
\subsection{Proof of Theorem \ref{thm:bbpconj}}\label{sect:proofbbp}
We start by proving a version of Theorem \ref{thm:bbpconj} for arbitrary normal
projective varieties.
We fix some notation.
Let $X$ be a normal projective variety over a field $k$.
If $\Char k = 0$ and $\Delta$ is an effective $\QQ$-Weil divisor on $X$ such
that $K_X+\Delta$ is $\QQ$-Cartier, then the \textsl{non-klt locus} of the pair
$(X,\Delta)$ is $\Nklt(X,\Delta) \coloneqq Z(\cJ(X,\Delta))$, where
$\cJ(X,\Delta)$ is the multiplier ideal \cite[Def.\ 9.3.56]{Laz04b}, and the
\textsl{non-klt locus} of $X$ is
\[
  \Nklt(X) \coloneqq \bigcap_{\Delta} \Nklt(X,\Delta),
\]
where the intersection runs over all effective $\QQ$-Weil divisors $\Delta$ such that
$K_X+\Delta$ is $\QQ$-Cartier.
If $\Char k = p > 0$, then the \textsl{non-strongly $F$-regular locus} of $X$
is
\[
  \NSFR(X) \coloneqq \bigl\{x \in X \bigm\vert \cO_{X,x}\ \text{is not strongly
  $F$-regular}\bigr\}.
\]
If $X$ is $F$-finite, then $\NSFR(X) = Z(\tau(X))$, since test ideals localize
\cite[Prop.\ 3.23$(ii)$]{Sch11}, and since $\tau(R) = R$ for an $F$-finite ring
$R$ if and only if $R$ is strongly $F$-regular \cite[Prop.\ 5.6(5)]{TW18}.
\begin{theorem}[cf.\ {\citeleft\citen{CDB13}\citemid Cor.\
  4.7\citepunct\citen{Sat18}\citemid Cor.\ 4.7\citeright}]
  \label{thm:bbpconjwloci}
  Let $X$ be a normal projective variety over a field $k$, and let $D$ be a
  pseudoeffective $\RR$-Cartier divisor on $X$.
  If $\Char k = 0$, then
  \begin{align}
    \Bminus(D) \smallsetminus \Nklt(X) &= \NNef(D) \smallsetminus \Nklt(X),
    \label{eq:bbpconjwlocichar0}
    \intertext{and if $\Char k = p > 0$, then}
    \Bminus(D) \smallsetminus \NSFR(X) &= \NNef(D) \smallsetminus \NSFR(X).
    \label{eq:bbpconjwlocicharp}
  \end{align}
\end{theorem}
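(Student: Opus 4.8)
The inclusion $\NNef(D) \subseteq \Bminus(D)$ holds over an arbitrary field, and this already gives the inclusion $\supseteq$ in both \eqref{eq:bbpconjwlocichar0} and \eqref{eq:bbpconjwlocicharp}. Indeed, if $v$ is a divisorial valuation with $v_{\num}(D) > 0$, then $v_{\num}(D+A) > 0$ for some ample $\RR$-Cartier $A$ for which $D+A$ is a $\QQ$-Cartier divisor, by continuity of $v_{\num}$ on the big cone; since an effective divisor $\QQ$-linearly equivalent to $D+A$ is a fortiori $\RR$-numerically equivalent to it, we get $v(\lVert D+A \rVert) \ge v_{\num}(D+A) > 0$, hence $c_X(v) \subseteq \SB(D+A) \subseteq \Bminus(D)$. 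It remains to prove the reverse inclusions, and the plan is to reduce them to cases already in the literature.

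In characteristic $p > 0$, apply the gamma construction (Theorem \ref{thm:gammaconstintro}) with $\cQ$ containing the properties reduced, normal, and strongly $F$-regular. This produces a purely inseparable extension $k \subseteq k^\Gamma$ with $k^\Gamma$ $F$-finite such that $\pi\colon X^\Gamma \coloneqq X \times_k k^\Gamma \to X$ is a homeomorphism identifying these loci; consequently $X^\Gamma$ is a normal projective variety over the $F$-finite field $k^\Gamma$, $\pi^*D$ is again pseudoeffective, and $\pi^{-1}(\NSFR(X)) = \NSFR(X^\Gamma)$. In characteristic zero, the proof of \cite[Cor.\ 4.7]{CDB13} uses only resolution of singularities and multiplier ideals, both available over an arbitrary field of characteristic zero, so one can work over $k$ directly; should a Bertini-type step force one to enlarge $k$ to an uncountable field $k'$, this is harmless, since $\Bminus$ is compatible with the base change $\pi\colon X_{k'} \to X$ by Lemma \ref{lem:bminusfieldext} and $\Nklt(X_{k'}) \subseteq \pi^{-1}(\Nklt(X))$ follows from $K_{X_{k'}} = \pi^*K_X$ and the compatibility of multiplier ideals with ground field extension \cite[Prop.\ 1.9]{JM12}. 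In either case, given $x \in \Bminus(D) \smallsetminus \NSFR(X)$ (resp.\ $x \in \Bminus(D) \smallsetminus \Nklt(X)$), pick a preimage $x'$ of $x$ under the surjection $\pi$; then $x' \in \Bminus(\pi^*D) \smallsetminus \NSFR(X^\Gamma)$ (resp.\ $x' \in \Bminus(\pi^*D) \smallsetminus \Nklt(X_{k'})$) by the above and Lemma \ref{lem:bminusfieldext}, so $x' \in \NNef(\pi^*D)$ by \cite[Cor.\ 4.7]{Sat18} (resp.\ \cite[Cor.\ 4.7]{CDB13}).

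The one remaining point, which I expect to be the main obstacle, is to transport $\NNef$ back down, i.e.\ to show $\pi(\NNef(\pi^*D)) \subseteq \NNef(D)$, since divisorial valuations do not pull back along $\pi$ as cleanly as divisors, ideals, or cohomology groups. Let $v'$ be a divisorial valuation on $X^\Gamma$ with $v'_{\num}(\pi^*D) > 0$, and let $v \coloneqq v'|_{k(X)}$. Because $k(X^\Gamma)/k(X)$ is purely inseparable, $v$ is again divisorial (the relevant Abhyankar invariants are unchanged) and $c_X(v) = \pi(c_{X^\Gamma}(v'))$ since $\pi$ is a homeomorphism; moreover, restricting the infimum defining $v'_{\num}$ to pullbacks of effective $\RR$-Cartier divisors numerically equivalent to $D + \varepsilon A_0$, for a fixed ample divisor $A_0$ on $X$ whose pullback to $X^\Gamma$ is ample, gives $v'_{\num}(\pi^*D) \le e(v'/v)\cdot v_{\num}(D)$ by Nakayama's description of $v_{\num}$ as $\lim_{\varepsilon \downarrow 0} v_{\num}(D + \varepsilon A_0)$, so $v_{\num}(D) > 0$. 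Hence $\pi(c_{X^\Gamma}(v')) = c_X(v) \subseteq \NNef(D)$, and taking the union over all such $v'$ yields $\pi(\NNef(\pi^*D)) \subseteq \NNef(D)$; in characteristic zero the same argument applies with $X^\Gamma$ replaced by $X_{k'}$, using that $k(X_{k'})/k(X)$ is algebraic so that divisors on $X_{k'}$ still have codimension-one image. Combining this with the first paragraph, the point $x = \pi(x')$ from the reduction lies in $\NNef(D)$, which gives the reverse inclusions and completes the proof.
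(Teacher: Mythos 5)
Your overall architecture --- prove $\NNef(D) \subseteq \Bminus(D)$ directly, use the gamma construction to pass to an $F$-finite field, invoke the known case there, and descend --- is the same as the paper's, and your first paragraph is fine. But there are two genuine gaps. First, citing \cite[Cor.\ 4.7]{CDB13} and \cite[Cor.\ 4.7]{Sat18} as black boxes does not give the statement at the required level of generality: those results are proved for pairs under a $\QQ$-Gorenstein-type hypothesis (the paper's own proof explicitly flags that \cite[Prop.\ 3.17]{Sat18}, on which Sato's corollary rests, assumes $K_X$ is $\QQ$-Cartier), whereas the theorem here makes no such assumption and removes $\Nklt(X) = \bigcap_\Delta \Nklt(X,\Delta)$, resp.\ $\NSFR(X) = Z(\tau(X))$ for Schwede's non-$\QQ$-Gorenstein test ideal. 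Getting around this is the real content of the proof: in characteristic zero one runs \cite[Thm.\ 4.5]{CDB13} for each boundary $\Delta$ separately --- replacing Nadel vanishing and Castelnuovo--Mumford regularity by a uniform global generation statement valid over arbitrary fields of characteristic zero (Remark \ref{rmk:dfkl31char0}) --- and then takes the union over all $\Delta$; in characteristic $p$ one first reduces to $D$ big and $\QQ$-Cartier via \cite[Lems.\ 2.12 and 2.13]{CDB13} and then extracts from Sato's paper only the pieces that avoid the $\QQ$-Cartier hypothesis on $K_X$, namely \cite[Cor.\ 4.6]{Sat18} together with the proof of $(2)\Rightarrow(5)$ in \cite[Prop.\ 3.17]{Sat18}. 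Your proposal assumes this work away.

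Second, the descent step, which you rightly identify as the main obstacle of your route. In characteristic $p$ your restriction-of-valuations argument can probably be made to work, but you would have to prove that the restriction of a divisorial valuation along the purely inseparable --- and in general \emph{infinite} --- extension $k(X^\Gamma)/k(X)$ is again divisorial; ``the relevant Abhyankar invariants are unchanged'' is not a proof. The paper sidesteps the issue entirely by reversing the direction: given $x \in \Bminus(D)$, it first chooses a divisorial valuation $v$ on $X$ with $c_X(v) = \overline{\{x\}}$, realized by a prime divisor $E$ on a normal model $X'$, and applies the gamma construction \emph{simultaneously} to $X$, $X'$, and $E$ (exactly where the several-schemes version of Theorem \ref{thm:gammaconst} is needed), so that $v$ extends canonically to $v^\Gamma = \ord_{E \times_k k^\Gamma}$; positivity of $v^\Gamma(\lVert (\pi^\Gamma)^*D \rVert)$ then descends to $v(\lVert D \rVert) > 0$ simply by pulling back effective divisors from $X$. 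Moreover, your characteristic-zero fallback is false as written: for $k'/k$ purely transcendental the extension $k(X \times_k k')/k(X)$ is transcendental, not algebraic, the projection is not a homeomorphism, and a prime divisor on $X \times_k k'$ need not have codimension-one image in $X$ (it can dominate $X$). This does not sink the characteristic-zero case only because no such base change is needed there.
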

\begin{proof}
  We first prove that $\NNef(D) \subseteq \Bminus(D)$ for every pseudoeffective
  $\RR$-Cartier divisor $D$, following \cite[Lem.\ 2.6]{BBP13}.
  Let $x \notin \Bminus(D)$, and let $v$ be a divisorial valuation such that $x
  \in c_X(v)$.
  By definition of $\Bminus(D)$, there exists an ample $\RR$-Cartier divisor $A$
  such that $D+A$ is $\QQ$-Cartier divisor for which $x \notin \SB(D+A)$.
  Thus, there exists an effective $\QQ$-Cartier divisor $E$ such that $E
  \sim_\QQ D+A$ and such that $x \notin \Supp E$.
  We therefore have $v_\num(D+A) \le v(E) = 0$.
  \par It remains to show the inclusions $\subseteq$.
  We first consider the case when $\Char k = 0$.
  Let $\Delta$ be an effective $\QQ$-Weil divisor such that $K_X+\Delta$ is
  $\QQ$-Cartier.
  The proof of \cite[Thm.\ 4.5]{CDB13} holds in this setting after replacing the
  application of Nadel vanishing and Castelnuovo--Mumford regularity in the
  proof of \cite[Lem.\ 4.1]{CDB13} with the uniform global generation
  statement mentioned in Remark \ref{rmk:dfkl31char0},
  hence $\Bminus(D) \smallsetminus \Nklt(X,\Delta) \subseteq \NNef(D)
  \smallsetminus \Nklt(X,\Delta)$.
  Taking the union over all $\QQ$-Weil divisors $\Delta$ such that $K_X+\Delta$
  is $\QQ$-Cartier, we then see that the inclusion $\subseteq$ holds in
  \eqref{eq:bbpconjwlocichar0}.
  \par We now consider the characteristic $p > 0$ case.
  By \cite[Lems.\ 2.12 and 2.13]{CDB13}, there exists
  a sequence $\{A_n\}_{n \ge 0}$ of ample $\RR$-Cartier divisors on $X$
  such that $D+A_n$ is a $\QQ$-Cartier divisor for every $n$,
  $\lim_{n\to \infty}\lVert A_n \rVert \to 0$, and
  \[
    \Bminus(D) = \bigcup_n \Bminus(D+A_n) \quad \text{and} \quad
    \NNef(D) = \bigcup_n \NNef(D+A_n).
  \]
  By proving the inclusion $\subseteq$ in \eqref{eq:bbpconjwlocicharp} for
  $D+A_n$, it therefore suffices to consider the case when $D$ is a big
  $\QQ$-Cartier divisor.
  Let $x \in \Bminus(D)$, and consider a divisorial valuation $v$ on $X$ such
  that $c_X(v) = \overline{\{x\}}$, which is given by the order of vanishing
  along a prime Cartier divisor $E$ on a normal birational model $X'$ of $X$.
  By applying the gamma construction (Theorem \ref{thm:gammaconst}) to $X$,
  $X'$, and $E$, there exists a field extension $k \subseteq k^\Gamma$ such that
  $X \times_k k^\Gamma$ and $X' \times_k k^\Gamma$ are normal varieties, $E
  \times_k k^\Gamma$ is a prime divisor, and $\pi^\Gamma(\NSFR(X^\Gamma)) =
  \NSFR(X)$.
  Note that the order of vanishing along $E \times_k k^\Gamma$ defines a
  divisorial valuation $v^\Gamma$ on $X^\Gamma$ extending $v$.
  Since $\Bminus((\pi^\Gamma)^*D) = (\pi^\Gamma)^{-1}(\Bminus(D))$ by Lemma
  \ref{lem:bminusfieldext}, we have $(\pi^\Gamma)^{-1}(x) \in
  \Bminus((\pi^\Gamma)^*D)$, hence \cite[Cor.\ 4.6]{Sat18} implies
  \[
    (\pi^\Gamma)^{-1}(x) \in \bigcup_{m \ge 1} Z\Bigl(\tau\bigl(X^\Gamma,m\cdot
    \bigl\lVert (\pi^\Gamma)^*D\bigr\rVert\bigr)\Bigr).
  \]
  By the proof of the implication $(2) \Rightarrow (5)$ in \cite[Prop.\
  3.17]{Sat18} (which does not use the assumption that $K_X$ is $\QQ$-Cartier),
  we see that $v^\Gamma(\lVert (\pi^\Gamma)^*D \rVert) > 0$, and by pulling back
  Cartier divisors in $\lvert D \rvert$ to $X^\Gamma$, we have $v(\lVert D
  \rVert) > 0$ as well.
  Finally, since $D$ is big, \cite[Lem.\ 3.3]{ELMNP06} implies $v_\num(D) =
  v(\lVert D \rVert) > 0$, hence $x \in \NNef(D)$, and the inclusion
  $\subseteq$ holds in \eqref{eq:bbpconjwlocicharp}.
\end{proof}
\par We now prove Theorem \ref{thm:bbpconj}.
\begin{proof}[Proof of Theorem \ref{thm:bbpconj}]
  As in the proof of Theorem \ref{thm:bbpconjwloci}, the inclusion $\NNef(D)
  \subseteq \Bminus(D)$ holds, hence it suffices to show the reverse inclusion.
  By Theorem \ref{thm:bbpconjwloci}, we have $\Bminus(D) \smallsetminus \Nklt(X)
  \subseteq \NNef(D)$ (resp.\ $\Bminus(D) \smallsetminus \NSFR(X)
  \subseteq \NNef(D)$).
  Now let $\{A_n\}_{n \ge 1}$ be a sequence of ample
  $\RR$-Cartier divisors such that $\lim_{n\to \infty}\lVert A_n \rVert \to 0$
  and such that $D+A_n$ is a $\QQ$-Cartier divisor for every $n$.
  By \cite[Prop.\ 1.19]{ELMNP06}, we have
  \[
    \Bminus(D) = \bigcup_{n \ge 1} \SB(D+A_n).
  \]
  Since each $\SB(D+A_n)$ does not contain any isolated points \cite[Prop.\
  1.1]{ELMNP09}, we see that $\Bminus(D)$ does not contain any isolated points.
  Finally, since $\Nklt(X)$ (resp.\ $\NSFR(X)$) is a discrete set of isolated
  closed points by assumption, we have $\Bminus(D) \subseteq \NNef(D)$
  by Theorem \ref{thm:bbpconjwloci}.
\end{proof}

\appendix
\section{Some results on \textit{F}-injective rings}\label{app:finj}
Let $R$ be a noetherian ring of prime characteristic $p > 0$.
Recall from Definition \ref{def:finj} that $R$ is \textsl{$F$-injective} if for
every maximal ideal $\fm \subseteq R$, the $R$-module homomorphism
$H^i_\fm(F) \colon H^i_\fm(R_\fm) \to H^i_\fm(F_*R_\fm)$ induced by Frobenius is
injective for all $i$.
\par In this appendix, we prove some facts about $F$-injective rings for which
we could not find a reference.
First, we characterize $F$-finite rings that are $F$-injective using
Grothendieck duality.
This characterization is already implicit in \cite[Rem.\ on p.\ 473]{Fed83} and
the proof of \cite[Prop.\ 4.3]{Sch09}.
Note that if $R$ is an $F$-finite ring, then the exceptional pullback $F^!$ from
Grothendieck duality exists \cite[III.6]{Har66}, and $R$ has a normalized
dualizing complex $\omega_R^\bullet$ by Theorem \ref{thm:ffiniteaffine}.
\begin{lemma}[cf.\ {\cite[Rem.\ on p.\ 473]{Fed83}}]\label{lem:finjffin}
  Let $R$ be an $F$-finite noetherian ring of prime characteristic $p > 0$.
  Then, $R$ is $F$-injective if and only if the $R$-module homomorphisms
  \begin{equation}\label{eq:grottracefinj}
    \mathbf{h}^{-i}\Tr_F\colon \mathbf{h}^{-i}F_*F^!\omega_R^\bullet
    \longrightarrow \mathbf{h}^{-i}\omega_R^\bullet
  \end{equation}
  induced by the Grothendieck trace of Frobenius are surjective for all $i$.
\end{lemma}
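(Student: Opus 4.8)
The plan is to pass through Grothendieck local duality, which turns the Frobenius action on local cohomology into the trace map on the dualizing complex, and then to invoke the faithful exactness of Matlis duality. Since $F$-injectivity is defined in terms of the maximal ideals of $R$, and since a homomorphism of $R$-modules is surjective if and only if it is surjective after localizing at every maximal ideal, I would first reduce both conditions to a fixed maximal ideal $\fm$ and work with the local ring $R_\fm$. This reduction is harmless: $F_*R$ is a finitely generated $R$-module because $R$ is $F$-finite, so $R\Hom_R(F_*R,-)$ commutes with localization, $(F_*R)_\fm = F_*(R_\fm)$ since the absolute Frobenius is the identity on the underlying space, and the trace (being the evaluation-at-$1$ map, see below) localizes accordingly. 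The only wrinkle is that $(\omega_R^\bullet)_\fm$ is a dualizing complex for $R_\fm$ but need not be normalized; it differs from the normalized $\omega_{R_\fm}^\bullet$ by a shift (the twisting module being free over the local ring $R_\fm$), and this shift is applied uniformly to $\omega_{R_\fm}^\bullet$, to $F^!\omega_{R_\fm}^\bullet$, and to $\Tr_F$, so it does not affect the validity of the statement ``$\mathbf{h}^{-i}\Tr_F$ is surjective for all $i$''. Thus I may assume $(R,\fm)$ is local and $\omega_R^\bullet$ is normalized; put $E \coloneqq E_R(R/\fm)$ and $(-)^\vee \coloneqq \Hom_R(-,E)$.

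Next I would recall Grothendieck local duality in functorial form: writing $\mathcal{D}(-) \coloneqq R\Hom_R(-,\omega_R^\bullet)$ for the dualizing functor on $D^b_{\mathrm{coh}}(R)$, there is an isomorphism $R\Gamma_\fm(M) \simeq \mathcal{D}(M)^\vee$, natural in $M \in D^b_{\mathrm{coh}}(R)$. Since $(-)^\vee$ is exact, taking cohomology gives $H^i_\fm(M) \simeq \bigl(\mathbf{h}^{-i}\mathcal{D}(M)\bigr)^\vee$, naturally in $M$. I would apply this to the morphism $F\colon R \to F_*R$ of finitely generated $R$-modules: it identifies $H^i_\fm(F)\colon H^i_\fm(R) \to H^i_\fm(F_*R)$ with the Matlis dual of $\mathbf{h}^{-i}$ applied to $\mathcal{D}(F)\colon \mathcal{D}(F_*R) \to \mathcal{D}(R)$.

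The key point is that $\mathcal{D}(F)$ is exactly the Grothendieck trace $\Tr_F$. Indeed $\mathcal{D}(R) = \omega_R^\bullet$, and $\mathcal{D}(F_*R) = R\Hom_R(F_*R,\omega_R^\bullet) = F_*F^!\omega_R^\bullet$ by the construction of $F^!$ for the finite morphism $F$ \cite[III.6]{Har66}; under these identifications $\mathcal{D}(F)$ is precomposition with $F\colon R \to F_*R$, so on a local homomorphism $\varphi \in R\Hom_R(F_*R,\omega_R^\bullet)$ it is $\varphi \mapsto \varphi\circ F$, whose value at $1$ is $\varphi(1)$. That is, $\mathcal{D}(F)$ is the evaluation map $F_*F^!\omega_R^\bullet \to \omega_R^\bullet$, which is by definition the Grothendieck trace $\Tr_F$. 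Hence $H^i_\fm(F) \simeq \bigl(\mathbf{h}^{-i}\Tr_F\bigr)^\vee$ for every $i$.

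Finally, I would conclude using that Matlis duality is exact and, on finitely generated (equivalently, artinian) modules, faithful, so that a homomorphism $g$ of such modules is surjective if and only if $g^\vee$ is injective (the cokernel of $g$ dualizes to the kernel of $g^\vee$, which vanishes only when the cokernel does). Applying this to $g = \mathbf{h}^{-i}\Tr_F$ shows that $\mathbf{h}^{-i}\Tr_F$ is surjective if and only if $H^i_\fm(F)$ is injective; quantifying over $i$ and then over all maximal ideals $\fm$ yields the asserted equivalence. I expect the delicate point to be the third step: verifying that the abstract Grothendieck trace really agrees with the concrete evaluation-at-$1$ description (so that the naturality of local duality applies to it) and tracking the normalization shift through the local reduction; everything else is a formal consequence of standard duality theory.
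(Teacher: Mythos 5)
Your argument is correct and follows essentially the same route as the paper's proof: Grothendieck local duality converts injectivity of the Frobenius action on $H^i_\fm$ into surjectivity of the Matlis-dual map on $\Ext^{-i}_R(-,\omega_R^\bullet)$, and Grothendieck duality for the finite morphism $F$ together with the evaluation-at-$1$ description of the trace identifies that map with $\mathbf{h}^{-i}\Tr_F$. Your write-up is simply more explicit about the localization step, the normalization shift, and the faithful exactness of $\Hom_R(-,E)$, all of which are handled correctly (though note the parenthetical ``finitely generated (equivalently, artinian)'' is a slip of terminology---faithfulness only needs $E$ to be an injective cogenerator).
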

\par Lemma \ref{lem:finjffin} is most useful when $R$ is essentially of finite type
over an $F$-finite field, in which case
$F^!\omega_R^\bullet \simeq \omega_R^\bullet$ in the derived category
$\DD^+_{\qc}(R)$ \cite[Thm.\ 5.3]{Nay09}, hence the homomorphisms in
\eqref{eq:grottracefinj} can be written as $\mathbf{h}^{-i}F_*\omega_R^\bullet
\to \mathbf{h}^{-i}\omega_R^\bullet$.
\begin{proof}
  By Grothendieck local duality \cite[Cor.\ V.6.3]{Har66}, $R$ is $F$-injective
  if and only if
  \begin{align*}
    F^*\colon \Ext^{-i}_R(F_*R,\omega_R^\bullet) &\longrightarrow
    \Ext^{-i}_R(R,\omega_R^\bullet)
    \intertext{is surjective for all $i$.
    By Grothendieck duality for finite morphisms \cite[Thm.\ III.6.7]{Har66},
    this occurs if and only if}
    F_*\Ext^{-i}_R(R,F^!\omega_R^\bullet) &\longrightarrow
    \Ext^{-i}_R(R,\omega_R^\bullet)
  \end{align*}
  is surjective for all $i$.
  Since $\Ext_R^{-i}(R,-) = \mathbf{h}^{-i}(-)$ and by the description
  of the Grothendieck duality isomorphism \cite[Thm.\ III.6.7]{Har66}, this
  is equivalent to the surjectivity of \eqref{eq:grottracefinj} for all $i$.
\end{proof}
\par Next, we prove that $F$-injectivity is an open condition on $F$-finite schemes.
This extends \cite[Prop.\ 3.12]{QS17} to the non-local case.
\begin{lemma}[cf.\ {\citeleft\citen{QS17}\citemid Prop.\ 3.12\citepunct
  \citen{Sch09}\citemid Prop.\ 4.3\citeright}]
  \label{lem:finjopen}
  If $R$ is an $F$-finite noetherian ring of prime characteristic $p > 0$, then the
  locus
  $\{\fp \in \Spec R \mid R_\fp\ \text{is $F$-injective}\}$
  is open.
  In particular, $R$ is $F$-injective if and only if $R_\fp$ is $F$-injective
  for every prime ideal $\fp \subseteq R$.
\end{lemma}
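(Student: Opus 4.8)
The plan is to reduce openness of the $F$-injective locus to the surjectivity criterion established in Lemma \ref{lem:finjffin}, and then to use the standard fact that the non-surjectivity locus of a morphism of coherent sheaves is closed. First I would invoke Theorem \ref{thm:ffiniteaffine}: since $R$ is $F$-finite and noetherian, $R$ is excellent and admits a normalized dualizing complex $\omega_R^\bullet$, so that $F^!\omega_R^\bullet$ exists and is again a bounded-below complex with coherent cohomology (here one uses that $F\colon R \to F_*R$ is a finite morphism, so $F^! = \operatorname{RHom}_R(F_*R,-)$ preserves coherence and boundedness of cohomology, the latter because $R$ has finite Krull dimension). Thus for each $i$ the Grothendieck trace induces an $R$-module homomorphism of \emph{finitely generated} $R$-modules
\[
  \mathbf{h}^{-i}\Tr_F\colon \mathbf{h}^{-i}F_*F^!\omega_R^\bullet \longrightarrow \mathbf{h}^{-i}\omega_R^\bullet,
\]
and only finitely many of these are nonzero.

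Next I would show that Lemma \ref{lem:finjffin} localizes: for a prime $\fp \subseteq R$, the ring $R_\fp$ is again $F$-finite and noetherian, its normalized dualizing complex is (a shift of) the localization $(\omega_R^\bullet)_\fp$, and the formation of $F^!$ and of the Grothendieck trace commutes with localization because $F$ is finite and these are defined via $\operatorname{RHom}$ and the evaluation/trace maps, which are compatible with the flat base change $R \to R_\fp$. Consequently $R_\fp$ is $F$-injective if and only if $(\mathbf{h}^{-i}\Tr_F)_\fp$ is surjective for all $i$; equivalently, writing $C_i \coloneqq \coker(\mathbf{h}^{-i}\Tr_F)$, if and only if $(C_i)_\fp = 0$ for all $i$. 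Since each $C_i$ is a finitely generated $R$-module and $C_i = 0$ for all but finitely many $i$, the locus where all $(C_i)_\fp$ vanish is $\Spec R \smallsetminus \bigcup_i \Supp C_i$, which is open because each $\Supp C_i = V(\operatorname{Ann}_R C_i)$ is closed and the union is finite. This proves the $F$-injective locus is open. The final sentence then follows formally: if $R_\fp$ is $F$-injective for every prime $\fp$, then in particular $C_i$ has empty support, so $C_i = 0$, so $\mathbf{h}^{-i}\Tr_F$ is surjective and $R$ is $F$-injective by Lemma \ref{lem:finjffin}; the converse is immediate from the localization statement just proved.

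The step I expect to be the main obstacle is the careful verification that both $F^!\omega_R^\bullet$ and the trace map $\Tr_F$ are compatible with localization at $\fp$, and that $(\omega_R^\bullet)_\fp$ is indeed a normalized dualizing complex for $R_\fp$ up to the appropriate shift — this requires tracking the normalization conventions in \cite{Har66} and invoking that localization of a dualizing complex is dualizing (e.g.\ \cite[Ch.\ V]{Har66}), together with the fact that $F^! = \operatorname{RHom}_R(F_*R,-)$ for the finite morphism $F$ commutes with the flat localization $R \to R_\fp$. Once this compatibility is in hand, the coherence and finiteness bookkeeping needed to conclude openness is routine. One minor point to address explicitly: one should note that $R$ having finite Krull dimension (again from Theorem \ref{thm:ffiniteaffine}) guarantees $\omega_R^\bullet$, hence $F_*F^!\omega_R^\bullet$, has cohomology concentrated in finitely many degrees, so that only finitely many cokernels $C_i$ occur.
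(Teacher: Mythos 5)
Your proposal is correct and follows essentially the same route as the paper: both characterize the $F$-injective locus as the complement of the (finite) union of supports of the cokernels of $\mathbf{h}^{-i}\Tr_F$, using Lemma \ref{lem:finjffin} together with the compatibility of the Grothendieck trace with the flat localization $R \to R_\fp$ (the paper cites \cite[Prop.\ III.6.6(2)]{Har66} for exactly the point you flag as the main obstacle). The finiteness bookkeeping and the deduction of the ``in particular'' statement also match the paper's argument.
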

\begin{proof}
  For each integer $i$, let $M_i$ be the cokernel of the $R$-module homomorphism
  in \eqref{eq:grottracefinj}.
  Since the Grothendieck trace is compatible with flat base change \cite[Prop.\
  III.6.6(2)]{Har66} and Frobenius is compatible with localizations, the support
  of $M_i$ is the locus where $R$ is not $F$-injective by Lemma
  \ref{lem:finjffin}.
  Now $\omega_R^\bullet$ and $F_*F^!\omega_R^\bullet$ have coherent
  cohomology that is nonzero in only finitely many degrees by definition of
  $\omega_R^\bullet$ and \cite[Prop.\ III.6.1]{Har66}, hence each $M_i$ is
  finitely generated over $R$, has closed support, and is nonzero for only
  finitely many $i$.
  The locus
  \begin{equation}\label{eq:finjlocusassupp}
    \Spec R \smallsetminus \biggl(\bigcup_{i} \Supp M_i\biggr) =
    \bigl\{\fp \in \Spec R \bigm\vert R_\fp\ \text{is $F$-injective}\bigr\}.
  \end{equation}
  is therefore open, proving the first statement.
  The locus \eqref{eq:finjlocusassupp} is in particular closed under
  generization, hence $R$ is $F$-injective only if $R_\fp$ is $F$-injective for
  every prime ideal $\fp \subseteq R$.
  The converse implication holds by definition, proving the second statement.
\end{proof}
\par Finally, to prove Theorem \ref{thm:gammaconst}, we used the following descent
property for $F$-injectivity.
\begin{lemma}[cf.\ {\cite[Lem.\ 4.6]{Has10}}]\label{lem:fsingsflatextfinj}
  Let $\varphi\colon R \to S$ be a pure homomorphism of
  rings of prime characteristic $p > 0$.
  If $I \subseteq R$ is a finitely generated ideal such that the Frobenius
  action
  \begin{alignat*}{3}
    H^i_{IS}(F_S)&\colon& H^i_{IS}(S) &\longrightarrow H^i_{IS}(F_{S*}S)
    \intertext{is injective for some $i$, then the Frobenius action}
    H^i_I(F_R)&\colon& H^i_I(R) &\longrightarrow H^i_I(F_{R*}R)
  \end{alignat*}
  is also injective.
  \par In particular, suppose that $(R,\fm)$ and $(S,\fn)$ are noetherian local
  rings and that $\varphi$ is a local homomorphism with zero-dimensional closed
  fiber.
  If $S$ is $F$-injective, then $R$ is $F$-injective.
\end{lemma}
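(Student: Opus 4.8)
The plan is to first reduce the final (``in particular'') assertion to the first assertion, and then to prove the first assertion by a \v{C}ech-complex computation in which purity is used only through the fact that a pure exact sequence is a filtered colimit of split exact sequences. For the reduction, suppose $(R,\fm)\to(S,\fn)$ is a local homomorphism with zero-dimensional closed fiber. Then $S/\fm S$ is artinian local, so its nilradical equals its maximal ideal; hence $\sqrt{\fm S}=\fn$, and therefore $H^j_{\fm S}(-)=H^j_\fn(-)$ on $S$-modules for every $j$. If $S$ is $F$-injective, then $H^j_\fn(F_S)$ is injective for every $j$ by Definition~\ref{def:finj}, so $H^j_{\fm S}(F_S)$ is injective for every $j$; applying the first assertion with $I=\fm$ then shows that $H^j_\fm(F_R)$ is injective for every $j$, i.e.\ that $R$ is $F$-injective.

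For the first assertion, fix generators $f_1,\dots,f_n$ of $I$ and let $\check C^\bullet=\check C^\bullet(f_1,\dots,f_n;R)$ be the \v{C}ech complex. Its terms are localizations of $R$, hence flat, so $H^j_I(M)=H^j(\check C^\bullet\otimes_R M)$ for every $R$-module $M$; moreover $H^j_I(N)=H^j_{IS}(N)$ for every $S$-module $N$, since the two \v{C}ech complexes literally coincide. Applying $H^i_I(-)=H^i(\check C^\bullet\otimes_R-)$ to the evident commutative square of Frobenius morphisms over $\varphi$ (rows $\varphi\colon R\to S$ and the induced map $F_{R*}R\to F_{S*}S$, columns $F_R$ and $F_S$) produces a commutative square of abelian groups whose right-hand column is $H^i_{IS}(F_S)$, injective by hypothesis. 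Hence it suffices to show that the natural map
\[
  a\colon H^i_I(R)\longrightarrow H^i_{IS}(S)
\]
is injective, for then any $x$ in the kernel of $H^i_I(F_R)$ satisfies $H^i_{IS}(F_S)(a(x))=0$ by commutativity, whence $a(x)=0$ and so $x=0$.

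The injectivity of $a$ is where I would use purity. Because $\varphi$ is pure, the sequence $0\to R\xrightarrow{\varphi}S\to Q\to0$ with $Q=\coker\varphi$ is pure exact, and thus is a filtered colimit of \emph{split} short exact sequences: writing $Q=\varinjlim_\lambda Q_\lambda$ with each $Q_\lambda$ finitely presented and pulling back along $Q_\lambda\to Q$ yields pure exact sequences $0\to R\to B_\lambda\to Q_\lambda\to0$ with finitely presented cokernel, which therefore split, and the filtered colimit over $\lambda$ recovers the original sequence (note $\varinjlim_\lambda B_\lambda=S$). Applying $\check C^\bullet\otimes_R-$, which is exact and commutes with filtered colimits, turns each of these into a short exact sequence of complexes $0\to\check C^\bullet\otimes_R R\to\check C^\bullet\otimes_R B_\lambda\to\check C^\bullet\otimes_R Q_\lambda\to0$ that is split as a sequence of complexes, since tensoring an $R$-linear retraction $B_\lambda\to R$ with $\check C^\bullet$ gives a chain-level retraction. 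Passing to $H^i$, each map $H^i_I(R)\to H^i(\check C^\bullet\otimes_R B_\lambda)$ is split injective; since $H^i$ commutes with filtered colimits, $a$ is the filtered colimit of these maps with common domain $H^i_I(R)$, and a filtered colimit of a compatible family of injections is injective.

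The main obstacle is precisely the injectivity of $a$. It is genuinely needed and genuinely uses purity: in general the map $H^i(\check C^\bullet\otimes_R R)\to H^i(\check C^\bullet\otimes_R S)$ has a nonzero kernel — a subquotient of $H^{i-1}_I(\coker\varphi)$, by the long exact sequence of the \v{C}ech complexes above — so one cannot merely invoke flat base change, and one must deploy purity in the strong form that a pure exact sequence is a filtered colimit of split exact sequences. (When $\varphi$ is in fact faithfully flat, as in the application to Theorem~\ref{thm:gammaconst}, flat base change gives the injectivity of $a$ immediately.) Everything else in the argument is formal, and the whole mirrors the proof of \cite[Lem.\ 4.6]{Has10}.
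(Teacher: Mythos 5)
Your proposal is correct and follows the paper's proof essentially verbatim in structure: the same reduction of the local statement via $\sqrt{\fm S}=\fn$, the same commutative square of Frobenii on Čech/local cohomology, and the same key point that purity of $\varphi$ forces $H^i_I(R)\to H^i_{IS}(S)$ to be injective. The only difference is that where the paper cites \cite[Cor.\ 6.6]{HR74} for that injectivity, you prove it from scratch via the characterization of pure exact sequences as filtered colimits of split ones — a correct, self-contained substitute for the citation.
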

\par In the non-noetherian case, we define local cohomology modules by
taking injective resolutions in the category of sheaves of abelian groups on
spectra, as is done in \cite[Exp.\ I, Def.\ 2.1]{SGA2}.
\begin{proof}
  For each $i$, we have the following commutative square:
  \begin{equation}\label{eq:finjflatext}
    \begin{tikzcd}[column sep=large]
      H_I^i(R) \rar{H_I^i(F_R)}\dar[hook] & H_I^i(F_{R*}R) \dar\\
      H_{IS}^i(S) \rar[hook]{H_{IS}^i(F_S)} & H_{IS}^i(F_{S*}S)
    \end{tikzcd}
  \end{equation}
  Since $\varphi$ is pure, the map $H^i_I(R) \hookrightarrow H^i_{IS}(S)$ is
  injective by applying \cite[Cor.\ 6.6]{HR74} to the \v{C}ech complex, which
  computes local cohomology by \cite[Exp.\ II, Prop.\ 5]{SGA2}.
  The bottom horizontal arrow in \eqref{eq:finjflatext} is injective by
  assumption, hence the top horizontal arrow $H^i_I(F_R)$ in
  \eqref{eq:finjflatext} is also injective by the commutativity of the diagram.
  The second statement in the lemma is a special case of the first,
  since under the given assumptions, we have $\sqrt{\fm S} = \fn$, hence
  $H^i_{\fm S}(S) \simeq H^i_\fn(S)$.
\end{proof}
\section{Strong \textit{F}-regularity for non-\textit{F}-finite rings}
\label{app:nonffinfreg}
In this appendix, we describe the relationship between different notions of
strong $F$-regularity in the non-$F$-finite setting.
This material is based on \cite[\S3]{Has10} and \cite[\S6]{DS16}.
\begin{definition}
  \label{def:fregapp}
  Let $R$ be a noetherian ring of prime characteristic $p > 0$.
  We follow the notation in Definition \ref{def:freg}.
  For every $c \in R$, we also say that $R$ is \textsl{$F$-split along $c$}
  if $\lambda^e_c$ splits as an $R$-module homomorphism for some $e > 0$.
  We then say that
  \begin{enumerate}[label=$(\alph*)$,ref=\alph*]
    \item $R$ is \textsl{split $F$-regular} if $R$ is $F$-split along every $c
      \in R^\circ$ \cite[Def.\ 5.1]{HH94};\label{def:fsingssplitreg}
    \item $R$ is \textsl{$F$-pure regular} if $R$ is $F$-pure along every $c \in
      R^\circ$ \cite[Rem.\ 5.3]{HH94}; and\label{def:fsingspureregds}
    \item $R$ is \textsl{strongly $F$-regular} if every inclusion of $R$-modules
      is tightly closed \cite[Def.\ 3.3]{Has10}.
      \label{def:fsingspureregapp}
  \end{enumerate}
  The definition in $(\ref{def:fsingspureregapp})$ is due to Hochster; see
  \cite[Def.\ 8.2]{HH90} for the definition of tight closure for modules.
  While $(\ref{def:fsingspureregapp})$ is not the definition used in the rest of
  this paper (Definition \ref{def:freg}$(\ref{def:fsingspurereg})$), these
  two definitions are equivalent by \cite[Lem.\ 3.6]{Has10}.
  \par Note that $(\ref{def:fsingssplitreg})$ is the usual definition of strong
  $F$-regularity in the $F$-finite setting.
  The terminology in $(\ref{def:fsingssplitreg})$ and
  $(\ref{def:fsingspureregds})$ is from \cite[Defs.\ 6.6.1 and 6.1.1]{DS16}.
  $F$-pure regular rings are called \textsl{very strongly $F$-regular} in
  \cite[Def.\ 3.4]{Has10}.
\end{definition}
\par The relationship between these notions of strong $F$-regularity can be
summarized as follows:
\[
  \begin{tikzcd}[column sep=7.5em,row sep=large]
    \text{$F$-split regular} \rar[Rightarrow]{\text{split maps are pure}}
    & \text{$F$-pure regular}
    \dar[Rightarrow,swap]{\text{\cite[Lem.\ 3.8]{Has10}}}\\
    & \text{strongly $F$-regular} \uar[Rightarrow,dashed,bend
    right=30,swap]{\substack{\text{local}\\\text{\cite[Lem.\ 3.6]{Has10}}}}
    \arrow[start anchor=west,Rightarrow,dashed,bend left=17]{ul}[description,
    pos=0.55]{\substack{\text{$F$-finite}\\\text{\cite[Lem.\ 3.9]{Has10}}}}
    & \text{regular}
    \arrow[Rightarrow]{l}[swap]{\text{\cite[Thm.\ 6.2.1]{DS16}}}
  \end{tikzcd}
\]


\begin{thebibliography}{ELMNP06}

  \bibitem[AS95]{AS95}
    U. Angehrn and Y. T. Siu. ``Effective freeness and point separation for
    adjoint bundles.'' \textit{Invent. Math.} 122.2 (1995), pp. 291--308.
    \textsc{doi}: \burlalt{https://doi.org/10.1007/BF01231446}{10.1007/BF01231446}.
    \textsc{mr}: \burlalt{https://mathscinet.ams.org/mathscinet-getitem?mr=1358978}{1358978}.

  \bibitem[Avr75]{Avr75}
    L. L. Avramov. ``Flat morphisms of complete intersections.'' Translated from
    the Russian by D. L. Johnson. \textit{Soviet Math. Dokl.} 16.6 (1975), pp.
    1413--1417.
    \textsc{mr}: \burlalt{https://mathscinet.ams.org/mathscinet-getitem?mr=396558}{396558}.

  \bibitem[BBP13]{BBP13}
    S. Boucksom, A. Broustet, and G. Pacienza. ``Uniruledness of stable base
    loci of adjoint linear systems via Mori theory.'' \textit{Math. Z.} 275.1-2
    (2013), pp. 499--507.
    \textsc{doi}: \burlalt{https://doi.org/10.1007/s00209-013-1144-y}{10.1007/s00209-013-1144-y}.
    \textsc{mr}: \burlalt{https://mathscinet.ams.org/mathscinet-getitem?mr=3101817}{3101817}.

  \bibitem[BF93]{BF93}
    J. Bingener and H. Flenner. ``On the fibers of analytic mappings.''
    \textit{Complex analysis and geometry.} Univ. Ser. Math. New York: Plenum,
    1993, pp. 45--101.
    \textsc{doi}: \burlalt{https://doi.org/10.1007/978-1-4757-9771-8_2}{10.1007/978-1-4757-9771-8_2}.
    \textsc{mr}: \burlalt{https://mathscinet.ams.org/mathscinet-getitem?mr=1211878}{1211878}.

  \bibitem[BGGJKM]{BGGJKM}
    J. I. Burgos Gil, W. Gubler, P. Jell, K. K\"unnemann, and F. Martin.
    ``Differentiability of non-archimedean volumes and non-archimedean
    Monge-Amp\`ere equations.'' With an appendix by R. Lazarsfeld. To appear in
    \textit{Algebr. Geom.} Mar. 11, 2019.
    \burlalt{https://arxiv.org/abs/1608.01919v6}{ arXiv:1608.01919v6} \burlalt{https://arxiv.org/abs/1608.01919v6}{[math.AG]}.

  \bibitem[Bir17]{Bir17}
    C. Birkar. ``The augmented base locus of real divisors over arbitrary
    fields.'' \textit{Math. Ann.} 368.3-4 (2017), pp. 905--921.
    \textsc{doi}: \burlalt{https://doi.org/10.1007/s00208-016-1441-y}{10.1007/s00208-016-1441-y}.
    \textsc{mr}: \burlalt{https://mathscinet.ams.org/mathscinet-getitem?mr=3673639}{3673639}.

  \bibitem[CDB13]{CDB13}
    S. Cacciola and L. Di Biagio. ``Asymptotic base loci on singular
    varieties.'' \textit{Math. Z.} 275.1-2 (2013), pp. 151--166.
    \textsc{doi}: \burlalt{https://doi.org/10.1007/s00209-012-1128-3}{10.1007/s00209-012-1128-3}.
    \textsc{mr}: \burlalt{https://mathscinet.ams.org/mathscinet-getitem?mr=3101802}{3101802}.

  \bibitem[Cut15]{Cut15}
    S. D. Cutkosky. ``Teissier's problem on inequalities of nef divisors.''
    \textit{J. Algebra Appl.} 14.9 (2015), 1540002, 37 pp.
    \textsc{doi}: \burlalt{https://doi.org/10.1142/S0219498815400022}{10.1142/S0219498815400022}.
    \textsc{mr}: \burlalt{https://mathscinet.ams.org/mathscinet-getitem?mr=3368254}{3368254}.

  \bibitem[dFKL07]{dFKL07}
    T. de Fernex, A. K\"uronya, and R. Lazarsfeld. ``Higher cohomology of
    divisors on a projective variety.'' \textit{Math. Ann.} 337.2 (2007), pp.
    443--455.
    \textsc{doi}: \burlalt{https://doi.org/10.1007/s00208-006-0044-4}{10.1007/s00208-006-0044-4}.
    \textsc{mr}: \burlalt{https://mathscinet.ams.org/mathscinet-getitem?mr=2262793}{2262793}.

  \bibitem[dFM09]{dFM09}
    T. de Fernex and M. Musta\c{t}\u{a}. ``Limits of log canonical thresholds.''
    \textit{Ann. Sci. \'Ec. Norm. Sup\'er. (4)} 42.3 (2009), pp. 491--515.
    \textsc{doi}: \burlalt{https://doi.org/10.24033/asens.2100}{10.24033/asens.2100}.
    \textsc{mr}: \burlalt{https://mathscinet.ams.org/mathscinet-getitem?mr=2543330}{2543330}.

  \bibitem[dJ96]{dJ96}
    A. J. de Jong. ``Smoothness, semi-stability and alterations.'' \textit{Inst.
    Hautes \'Etudes Sci. Publ. Math.} 83 (1996), pp. 51--93.
    \textsc{doi}: \burlalt{https://doi.org/10.1007/BF02698644}{10.1007/BF02698644}.
    \textsc{mr}: \burlalt{https://mathscinet.ams.org/mathscinet-getitem?mr=1423020}{1423020}.

  \bibitem[DM]{DM}
    R. Datta and T. Murayama. ``Permanence properties of $F$-injectivity.''
    \burlalt{https://arxiv.org/abs/1906.11399v1}{ arXiv:1906.11399v1} \burlalt{https://arxiv.org/abs/1906.11399v1}{[math.AC]}.

  \bibitem[DS16]{DS16}
    R. Datta and K. E. Smith. ``Frobenius and valuation rings.'' \textit{Algebra
    Number Theory} 10.5 (2016), pp. 1057--1090.
    \textsc{doi}: \burlalt{https://doi.org/10.2140/ant.2016.10.1057}{10.2140/ant.2016.10.1057}.
    \textsc{mr}: \burlalt{https://mathscinet.ams.org/mathscinet-getitem?mr=3531362}{3531362}.
    See also \cite{DS17}.

  \bibitem[DS17]{DS17}
    R. Datta and K. E. Smith. ``Correction to the article ``Frobenius and
    valuation rings''.'' \textit{Algebra Number Theory} 11.4 (2017), pp.
    1003--1007.
    \textsc{doi}: \burlalt{https://doi.org/10.2140/ant.2017.11.1003}{10.2140/ant.2017.11.1003}.
    \textsc{mr}: \burlalt{https://mathscinet.ams.org/mathscinet-getitem?mr=3665644}{3665644}.

  \bibitem[EGAIV\textsubscript{2}]{EGAIV2}
    A. Grothendieck and J. Dieudonn\'e. ``\'El\'ements de g\'eom\'etrie
    alg\'ebrique. IV. \'Etude locale des sch\'emas et des morphismes de
    sch\'emas. II.'' \textit{Inst. Hautes \'Etudes Sci. Publ. Math.} 24 (1965),
    pp. 1--231.
    \textsc{doi}: \burlalt{https://doi.org/10.1007/BF02684322}{10.1007/BF02684322}.
    \textsc{mr}: \burlalt{https://mathscinet.ams.org/mathscinet-getitem?mr=199181}{199181}.

  \bibitem[EGAIV\textsubscript{3}]{EGAIV3}
    A. Grothendieck and J. Dieudonn\'e. ``\'El\'ements de g\'eom\'etrie
    alg\'ebrique. IV. \'Etude locale des sch\'emas et des morphismes de
    sch\'emas. III.'' \textit{Inst. Hautes \'Etudes Sci. Publ. Math.} 28 (1966),
    pp. 1--255.
    \textsc{doi}: \burlalt{https://doi.org/10.1007/BF02684343}{10.1007/BF02684343}.
    \textsc{mr}: \burlalt{https://mathscinet.ams.org/mathscinet-getitem?mr=217086}{217086}.

  \bibitem[EH08]{EH08}
    F. Enescu and M. Hochster. ``The Frobenius structure of local cohomology.''
    \textit{Algebra Number Theory} 2.7 (2008), pp. 721--754.
    \textsc{doi}: \burlalt{https://doi.org/10.2140/ant.2008.2.721}{10.2140/ant.2008.2.721}.
    \textsc{mr}: \burlalt{https://mathscinet.ams.org/mathscinet-getitem?mr=2460693}{2460693}.

  \bibitem[ELMNP05]{ELMNP05}
    L. Ein, R. Lazarsfeld, M. Musta\c{t}\u{a}, M. Nakamaye, and M. Popa.
    ``Asymptotic invariants of line bundles.'' \textit{Pure Appl. Math. Q.}
    1.2 (2005): \textit{Special Issue: In memory of Armand Borel, Part 1,}
    pp. 379--403.
    \textsc{doi}: \burlalt{https://doi.org/10.4310/PAMQ.2005.v1.n2.a8}{10.4310/PAMQ.2005.v1.n2.a8}.
    \textsc{mr}: \burlalt{https://mathscinet.ams.org/mathscinet-getitem?mr=2194730}{2194730}.

  \bibitem[ELMNP06]{ELMNP06}
    L. Ein, R. Lazarsfeld, M. Musta\c{t}\u{a}, M. Nakamaye, and M. Popa.
    ``Asymptotic invariants of base loci.'' \textit{Ann. Inst. Fourier
    (Grenoble)} 56.6 (2006), pp. 1701--1734.
    \textsc{doi}: \burlalt{https://doi.org/10.5802/aif.2225}{10.5802/aif.2225}.
    \textsc{mr}: \burlalt{https://mathscinet.ams.org/mathscinet-getitem?mr=2282673}{2282673}.

  \bibitem[ELMNP09]{ELMNP09}
    L. Ein, R. Lazarsfeld, M. Musta\c{t}\u{a}, M. Nakamaye, and M. Popa.
    ``Restricted volumes and base loci of linear series.'' \textit{Amer. J.
    Math.} 131.3 (2009), pp. 607--651.
    \textsc{doi}: \burlalt{https://doi.org/10.1353/ajm.0.0054}{10.1353/ajm.0.0054}.
    \textsc{mr}: \burlalt{https://mathscinet.ams.org/mathscinet-getitem?mr=2530849}{2530849}.

  \bibitem[Fed83]{Fed83}
    R. Fedder. ``$F$-purity and rational singularity.'' \textit{Trans. Amer.
    Math. Soc.} 278.2 (1983), pp. 461--480.
    \textsc{doi}: \burlalt{https://doi.org/10.2307/1999165}{10.2307/1999165}.
    \textsc{mr}: \burlalt{https://mathscinet.ams.org/mathscinet-getitem?mr=701505}{701505}.

  \bibitem[FKL16]{FKL16}
    M. Fulger, J. Koll\'ar, and B. Lehmann. ``Volume and Hilbert function of
    $\mathbb{R}$-divisors.'' \textit{Michigan Math. J.} 65.2 (2016), pp.
    371--387.
    \textsc{doi}: \burlalt{https://doi.org/10.1307/mmj/1465329018}{10.1307/mmj/1465329018}.
    \textsc{mr}: \burlalt{https://mathscinet.ams.org/mathscinet-getitem?mr=3510912}{3510912}.

  \bibitem[FOV99]{FOV99}
    H. Flenner, L. O'Carroll, and W. Vogel. \textit{Joins and intersections.}
    Springer Monogr. Math. Berlin: Springer-Verlag, 1999.
    \textsc{doi}: \burlalt{https://doi.org/10.1007/978-3-662-03817-8}{10.1007/978-3-662-03817-8}.
    \textsc{mr}: \burlalt{https://mathscinet.ams.org/mathscinet-getitem?mr=1724388}{1724388}.
    
  \bibitem[FW89]{FW89}
    R. Fedder and K.-i. Watanabe. ``A characterization of $F$-regularity in
    terms of $F$-purity.'' \textit{Commutative algebra (Berkeley, CA, 1987).}
    Math. Sci. Res. Inst. Publ., Vol. 15. New York: Springer-Verlag, 1989, pp.
    227--245.
    \textsc{doi}: \burlalt{https://doi.org/10.1007/978-1-4612-3660-3_11}{10.1007/978-1-4612-3660-3_11}.
    \textsc{mr}: \burlalt{https://mathscinet.ams.org/mathscinet-getitem?mr=1015520}{1015520}.

  \bibitem[Gab04]{Gab04}
    O. Gabber. ``Notes on some $t$-structures.'' \textit{Geometric aspects of
    Dwork theory. Vol. II.} Berlin: Walter de Gruyter, 2004, pp. 711--734.
    \textsc{doi}: \burlalt{https://doi.org/10.1515/9783110198133.2.711}{10.1515/9783110198133.2.711}.
    \textsc{mr}: \burlalt{https://mathscinet.ams.org/mathscinet-getitem?mr=2099084}{2099084}.

  \bibitem[Har66]{Har66}
    R. Hartshorne. \textit{Residues and duality.} Lecture notes of a seminar on
    the work of A. Grothendieck, given at Harvard 1963/64. With an appendix by
    P. Deligne. Lecture Notes in Math., Vol. 20. Berlin-New York:
    Springer-Verlag, 1966.
    \textsc{doi}: \burlalt{https://doi.org/10.1007/BFb0080482}{10.1007/BFb0080482}.
    \textsc{mr}: \burlalt{https://mathscinet.ams.org/mathscinet-getitem?mr=222093}{222093}.

  \bibitem[Har77]{Har77}
    R. Hartshorne. \textit{Algebraic geometry.} Grad. Texts in Math., Vol. 52.
    New York-Heidelberg: Springer-Verlag, 1977.
    \textsc{doi}: \burlalt{https://doi.org/10.1007/978-1-4757-3849-0}{10.1007/978-1-4757-3849-0}.
    \textsc{mr}: \burlalt{https://mathscinet.ams.org/mathscinet-getitem?mr=463157}{463157}.

  \bibitem[Has10a]{Has10}
    M. Hashimoto. ``$F$-pure homomorphisms, strong $F$-regularity, and
    $F$-injectivity.'' \textit{Comm. Algebra} 38.12 (2010), pp. 4569--4596.
    \textsc{doi}: \burlalt{https://doi.org/10.1080/00927870903431241}{10.1080/00927870903431241}.
    \textsc{mr}: \burlalt{https://mathscinet.ams.org/mathscinet-getitem?mr=2764840}{2764840}.

  \bibitem[Has10b]{Has10proc}
    M. Hashimoto. ``$F$-purity of homomorphisms, strong $F$-regularity, and
    $F$-injectivity.'' \textit{The 31st symposium on commutative algebra in
    Japan (Osaka, 2009).} 2010, pp. 9--16.
    \textsc{url}: \burl{http://www.math.okayama-u.ac.jp/~hashimoto/paper/Comm-Alg/31-all.pdf}.

  \bibitem[Hau10]{Hau10}
    H. Hauser. ``On the problem of resolution of singularities in positive
    characteristic (or: a proof we are still waiting for).'' \textit{Bull. Amer.
    Math. Soc. (N.S.)} 47.1 (2010), pp. 1--30.
    \textsc{doi}: \burlalt{https://doi.org/10.1090/S0273-0979-09-01274-9}{10.1090/S0273-0979-09-01274-9}.
    \textsc{mr}: \burlalt{https://mathscinet.ams.org/mathscinet-getitem?mr=2566444}{2566444}.

  \bibitem[HH90]{HH90}
    M. Hochster and C. Huneke. ``Tight closure, invariant theory, and the
    Brian\c{c}on-Skoda theorem.'' \textit{J. Amer. Math. Soc.} 3.1 (1990), pp.
    31--116.
    \textsc{doi}: \burlalt{https://doi.org/10.2307/1990984}{10.2307/1990984}.
    \textsc{mr}: \burlalt{https://mathscinet.ams.org/mathscinet-getitem?mr=1017784}{1017784}.

  \bibitem[HH94]{HH94}
    M. Hochster and C. Huneke. ``$F$-regularity, test elements, and smooth base
    change.'' \textit{Trans. Amer. Math. Soc.} 346.1 (1994), pp. 1--62.
    \textsc{doi}: \burlalt{https://doi.org/10.2307/2154942}{10.2307/2154942}.
    \textsc{mr}: \burlalt{https://mathscinet.ams.org/mathscinet-getitem?mr=1273534}{1273534}.

  \bibitem[HR74]{HR74}
    M. Hochster and J. L. Roberts. ``Rings of invariants of reductive groups
    acting on regular rings are Cohen-Macaulay.'' \textit{Advances in Math.} 13
    (1974), pp. 115--175.
    \textsc{doi}: \burlalt{https://doi.org/10.1016/0001-8708(74)90067-X}{10.1016/0001-8708(74)90067-X}.
    \textsc{mr}: \burlalt{https://mathscinet.ams.org/mathscinet-getitem?mr=347810}{347810}.

  \bibitem[HR76]{HR76}
    M. Hochster and J. L. Roberts. ``The purity of the Frobenius and local
    cohomology.'' \textit{Advances in Math.} 21.2 (1976), pp. 117--172.
    \textsc{doi}: \burlalt{https://doi.org/10.1016/0001-8708(76)90073-6}{10.1016/0001-8708(76)90073-6}.
    \textsc{mr}: \burlalt{https://mathscinet.ams.org/mathscinet-getitem?mr=417172}{417172}.

  \bibitem[HT04]{HT04}
    N. Hara and S. Takagi. ``On a generalization of test ideals.''
    \textit{Nagoya Math. J.} 175 (2004), pp. 59--74.
    \textsc{doi}: \burlalt{https://doi.org/10.1017/S0027763000008904}{10.1017/S0027763000008904}.
    \textsc{mr}: \burlalt{https://mathscinet.ams.org/mathscinet-getitem?mr=2085311}{2085311}.

  \bibitem[HY03]{HY03}
    N. Hara and K. Yoshida. ``A generalization of tight closure and
    multiplier ideals.'' \textit{Trans. Amer. Math. Soc.} 355.8 (2003), pp.
    3143--3174.
    \textsc{doi}: \burlalt{https://doi.org/10.1090/S0002-9947-03-03285-9}{10.1090/S0002-9947-03-03285-9}.
    \textsc{mr}: \burlalt{https://mathscinet.ams.org/mathscinet-getitem?mr=1974679}{1974679}.

  \bibitem[Ito13]{Ito13}
    A. Ito. ``Okounkov bodies and Seshadri constants.'' \textit{Adv. Math.} 241
    (2013), pp. 246--262.
    \textsc{doi}: \burlalt{https://doi.org/10.1016/j.aim.2013.04.005}{10.1016/j.aim.2013.04.005}.
    \textsc{mr}: \burlalt{https://mathscinet.ams.org/mathscinet-getitem?mr=3053712}{3053712}.

  \bibitem[JM12]{JM12}
    M. Jonsson and M. Musta\c{t}\u{a}. ``Valuations and asymptotic invariants
    for sequences of ideals.'' \textit{Ann. Inst. Fourier (Grenoble)} 62.6
    (2012), pp. 2145--2209.
    \textsc{doi}: \burlalt{https://doi.org/10.5802/aif.2746}{10.5802/aif.2746}.
    \textsc{mr}: \burlalt{https://mathscinet.ams.org/mathscinet-getitem?mr=3060755}{3060755}.

  \bibitem[Kle05]{Kle05}
    S. L. Kleiman. ``The Picard scheme.'' \textit{Fundamental algebraic
    geometry.} Math. Surveys Monogr., Vol. 123. Providence, RI: Amer. Math.
    Soc., 2005, pp. 235--321.
    \textsc{doi}: \burlalt{https://doi.org/10.1090/surv/123/09}{10.1090/surv/123/09}.
    \textsc{mr}: \burlalt{https://mathscinet.ams.org/mathscinet-getitem?mr=2223410}{2223410}.

  \bibitem[Kun76]{Kun76}
    E. Kunz. ``On Noetherian rings of characteristic $p$.'' \textit{Amer. J.
    Math.} 98.4 (1976), pp. 999--1013.
    \textsc{doi}: \burlalt{https://doi.org/10.2307/2374038}{10.2307/2374038}.
    \textsc{mr}: \burlalt{https://mathscinet.ams.org/mathscinet-getitem?mr=432625}{432625}.

  \bibitem[K\"ur06]{Kur06}
    A. K\"uronya. ``Asymptotic cohomological functions on projective
    varieties.'' \textit{Amer. J. Math.} 128.6 (2006), pp. 1475--1519.
    \textsc{doi}: \burlalt{https://doi.org/10.1353/ajm.2006.0044}{10.1353/ajm.2006.0044}.
    \textsc{mr}: \burlalt{https://mathscinet.ams.org/mathscinet-getitem?mr=2275909}{2275909}.

  \bibitem[Laz04a]{Laz04a}
    R. Lazarsfeld. \textit{Positivity in algebraic geometry. I. Classical
    setting: line bundles and linear series.} Ergeb. Math. Grenzgeb. (3), Vol.
    48. Berlin: Springer-Verlag, 2004.
    \textsc{doi}: \burlalt{https://doi.org/10.1007/978-3-642-18808-4}{10.1007/978-3-642-18808-4}.
    \textsc{mr}: \burlalt{https://mathscinet.ams.org/mathscinet-getitem?mr=2095471}{2095471}.
    
  \bibitem[Laz04b]{Laz04b}
    R. Lazarsfeld. \textit{Positivity in algebraic geometry. II. Positivity for
    vector bundles, and multiplier ideals.} Ergeb. Math. Grenzgeb. (3), Vol.
    49. Berlin: Springer-Verlag, 2004.
    \textsc{doi}: \burlalt{https://doi.org/10.1007/978-3-642-18810-7}{10.1007/978-3-642-18810-7}.
    \textsc{mr}: \burlalt{https://mathscinet.ams.org/mathscinet-getitem?mr=2095472}{2095472}.

  \bibitem[Ma14]{Ma14}
    L. Ma. ``Finiteness properties of local cohomology for $F$-pure local
    rings.'' \textit{Int. Math. Res. Not. IMRN} 20 (2014), pp. 5489--5509.
    \textsc{doi}: \burlalt{https://doi.org/10.1093/imrn/rnt130}{10.1093/imrn/rnt130}.
    \textsc{mr}: \burlalt{https://mathscinet.ams.org/mathscinet-getitem?mr=3271179}{3271179}.

  \bibitem[Man80]{Man80}
    M. Manaresi. ``Some properties of weakly normal varieties.'' \textit{Nagoya
    Math. J.} 77 (1980), pp. 61--74.
    \textsc{doi}: \burlalt{https://doi.org/10.1017/S0027763000018663}{10.1017/S0027763000018663}.
    \textsc{mr}: \burlalt{https://mathscinet.ams.org/mathscinet-getitem?mr=556308}{556308}.

  \bibitem[Mat89]{Mat89}
    H. Matsumura. \textit{Commutative ring theory.} Second ed. Translated from
    the Japanese by M. Reid. Cambridge Stud. Adv. Math., Vol. 8. Cambridge:
    Cambridge Univ. Press, 1989.
    \textsc{doi}: \burlalt{https://doi.org/10.1017/CBO9781139171762}{10.1017/CBO9781139171762}.
    \textsc{mr}: \burlalt{https://mathscinet.ams.org/mathscinet-getitem?mr=1011461}{1011461}.

  \bibitem[MPST19]{MPST19}
    L. Ma, T. Polstra, K. Schwede, and K. Tucker. ``$F$-signature under
    birational morphisms.'' \textit{Forum Math. Sigma} 7.e11 (2019), 20 pp.
    \textsc{doi}: \burlalt{https://doi.org/10.1017/fms.2019.6}{10.1017/fms.2019.6}.
    \textsc{mr}: \burlalt{https://mathscinet.ams.org/mathscinet-getitem?mr=3940231}{3940231}.

  \bibitem[Mur18]{Mur18}
    T. Murayama. ``Frobenius-Seshadri constants and characterizations of
    projective space.'' \textit{Math. Res. Lett.} 25.3 (2018), pp. 905--936.
    \textsc{doi}: \burlalt{https://doi.org/10.4310/MRL.2018.v25.n3.a9}{10.4310/MRL.2018.v25.n3.a9}.
    \textsc{mr}: \burlalt{https://mathscinet.ams.org/mathscinet-getitem?mr=3847340}{3847340}.

  \bibitem[Mur19]{Mur19}
    T. Murayama. \textit{Seshadri constants and Fujita's conjecture via positive
    characteristic methods.} Ph.D. thesis. University of Michigan, 2019, 206 pp.
    \textsc{url}: \burl{http://hdl.handle.net/2027.42/149842}.
  
  \bibitem[Mus13]{Mus13}
    M. Musta\c{t}\u{a}. ``The non-nef locus in positive characteristic.''
    \textit{A celebration of algebraic geometry.} Clay Math. Proc., Vol. 18.
    Providence, RI: Amer. Math. Soc., 2013, pp. 535--551.
    \textsc{url}: \burl{http://www.claymath.org/library/proceedings/cmip018c.pdf}.
    \textsc{mr}: \burlalt{https://mathscinet.ams.org/mathscinet-getitem?mr=3114955}{3114955}.

  \bibitem[Nak04]{Nak04}
    N. Nakayama. \textit{Zariski-decomposition and abundance.} MSJ Mem., Vol.
    14. Tokyo: Math. Soc. Japan, 2004.
    \textsc{doi}: \burlalt{https://doi.org/10.2969/msjmemoirs/014010000}{10.2969/msjmemoirs/014010000}.
    \textsc{mr}: \burlalt{https://mathscinet.ams.org/mathscinet-getitem?mr=2104208}{2104208}.

  \bibitem[Nay09]{Nay09}
    S. Nayak. ``Compactification for essentially finite-type maps.''
    \textit{Adv. Math.} 222.2 (2009), pp. 527--546.
    \textsc{doi}: \burlalt{https://doi.org/10.1016/j.aim.2009.05.002}{10.1016/j.aim.2009.05.002}.
    \textsc{mr}: \burlalt{https://mathscinet.ams.org/mathscinet-getitem?mr=2538019}{2538019}.

  \bibitem[PST17]{PST17}
    Zs. Patakfalvi, K. Schwede, and K. Tucker. ``Positive characteristic
    algebraic geometry.'' \textit{Surveys on recent developments in algebraic
    geometry.} Proc. Sympos. Pure Math., Vol. 95. Providence, RI: Amer. Math.
    Soc., 2017, pp. 33--80.
    \textsc{doi}: \burlalt{https://doi.org/10.1090/pspum/095/01640}{10.1090/pspum/095/01640}.
    \textsc{mr}: \burlalt{https://mathscinet.ams.org/mathscinet-getitem?mr=3727496}{3727496}.

  \bibitem[QS17]{QS17}
    P. H. Quy and K. Shimomoto. ``$F$-injectivity and Frobenius closure of
    ideals in Noetherian rings of characteristic $p > 0$.'' \textit{Adv. Math.}
    313 (2017), pp. 127--166.
    \textsc{doi}: \burlalt{https://doi.org/10.1016/j.aim.2017.04.002}{10.1016/j.aim.2017.04.002}.
    \textsc{mr}: \burlalt{https://mathscinet.ams.org/mathscinet-getitem?mr=3649223}{3649223}.

  \bibitem[Ray78]{Ray78}
    M. Raynaud. ``Contre-exemple au ``vanishing theorem'' en caract\'eristique
    $p > 0$.'' \textit{C. P. Ramanujam---a tribute.} Tata Inst. Fund. Res.
    Studies in Math, Vol. 8. Berlin-New York: Springer-Verlag, 1978, pp. 
    273--278.
    \textsc{mr}: \burlalt{https://mathscinet.ams.org/mathscinet-getitem?mr=541027}{541027}.

  \bibitem[Sat18]{Sat18}
    K. Sato. ``Stability of test ideals of divisors with small multiplicity.''
    \textit{Math. Z.} 288.3-4 (2018), pp. 783--802.
    \textsc{doi}: \burlalt{https://doi.org/10.1007/s00209-017-1913-0}{10.1007/s00209-017-1913-0}.
    \textsc{mr}: \burlalt{https://mathscinet.ams.org/mathscinet-getitem?mr=3778978}{3778978}.

  \bibitem[Sch09]{Sch09}
    K. Schwede. ``$F$-injective singularities are Du Bois.'' \textit{Amer. J.
    Math.} 131.2 (2009), pp. 445--473.
    \textsc{doi}: \burlalt{https://doi.org/10.1353/ajm.0.0049}{10.1353/ajm.0.0049}.
    \textsc{mr}: \burlalt{https://mathscinet.ams.org/mathscinet-getitem?mr=2503989}{2503989}.

  \bibitem[Sch10]{Sch10}
    K. Schwede. ``Centers of $F$-purity.'' \textit{Math. Z.} 265.3 (2010), pp.
    687--714.
    \textsc{doi}: \burlalt{https://doi.org/10.1007/s00209-009-0536-5}{10.1007/s00209-009-0536-5}.
    \textsc{mr}: \burlalt{https://mathscinet.ams.org/mathscinet-getitem?mr=2644316}{2644316}.

  \bibitem[Sch11]{Sch11}
    K. Schwede. ``Test ideals in non-$\mathbb{Q}$-Gorenstein rings.''
    \textit{Trans. Amer. Math. Soc.} 363.11 (2011), pp. 5925--5941.
    \textsc{doi}: \burlalt{https://doi.org/10.1090/S0002-9947-2011-05297-9}{10.1090/S0002-9947-2011-05297-9}.
    \textsc{mr}: \burlalt{https://mathscinet.ams.org/mathscinet-getitem?mr=2817415}{2817415}.

  \bibitem[SGA2]{SGA2}
    A. Grothendieck. \textit{S\'eminaire de g\'eom\'etrie alg\'ebrique du Bois
    Marie, 1962. Cohomologie locale des faisceaux coh\'erents et th\'eor\`emes
    de Lefschetz locaux et globaux (SGA 2).} With an expos\'e by Mme M. Raynaud.
    With a preface and edited by Y. Laszlo. Revised reprint of the 1968 French
    original. Doc. Math. (Paris), Vol. 4. Paris: Soc. Math. France, 2005.
    \textsc{mr}: \burlalt{https://mathscinet.ams.org/mathscinet-getitem?mr=2171939}{2171939}.

  \bibitem[ST12]{ST12}
    K. Schwede and K. Tucker. ``A survey of test ideals.'' \textit{Progress in
    commutative algebra 2.} Berlin: Walter de Gruyter, 2012, pp. 39--99.
    \textsc{doi}: \burlalt{https://doi.org/10.1515/9783110278606.39}{10.1515/9783110278606.39}.
    \textsc{mr}: \burlalt{https://mathscinet.ams.org/mathscinet-getitem?mr=2932591}{2932591}.

  \bibitem[Tan17]{Tan17}
    H. Tanaka. ``Semiample perturbations for log canonical varieties over an
    $F$-finite field containing an infinite perfect field.'' \textit{Internat.
    J. Math.} 28.5 (2017), 1750030, 13 pp.
    \textsc{doi}: \burlalt{https://doi.org/10.1142/S0129167X17500306}{10.1142/S0129167X17500306}.
    \textsc{mr}: \burlalt{https://mathscinet.ams.org/mathscinet-getitem?mr=3655076}{3655076}.

  \bibitem[Tan18]{Tan18}
    H. Tanaka. ``Minimal model program for excellent surfaces.'' \textit{Ann.
    Inst. Fourier (Grenoble)} 68.1 (2018), pp. 345--376.
    \textsc{doi}: \burlalt{https://doi.org/10.5802/aif.3163}{10.5802/aif.3163}.
    \textsc{mr}: \burlalt{https://mathscinet.ams.org/mathscinet-getitem?mr=3795482}{3795482}.

  \bibitem[Tan]{Tan}
    H. Tanaka. ``Abundance theorem for surfaces over imperfect fields.''
    To appear in \textit{Math. Z.}
    \textsc{doi}: \burlalt{https://doi.org/10.1007/s00209-019-02345-2}{10.1007/s00209-019-02345-2}.

  \bibitem[TW18]{TW18}
    S. Takagi and K.-i. Watanabe. ``$F$-singularities: applications of
    characteristic $p$ methods to singularity theory.'' Translated from the
    Japanese by the authors. \textit{Sugaku Expositions} 31.1 (2018), pp. 1--42.
    \textsc{doi}: \burlalt{https://doi.org/10.1090/suga/427}{10.1090/suga/427}.
    \textsc{mr}: \burlalt{https://mathscinet.ams.org/mathscinet-getitem?mr=3784697}{3784697}.

  \bibitem[V\'el95]{Vel95}
    J. D. V\'elez. ``Openness of the F-rational locus and smooth base
    change.'' \textit{J. Algebra} 172.2 (1995), pp. 425--453.
    \textsc{doi}: \burlalt{https://doi.org/10.1016/S0021-8693(05)80010-9}{10.1016/S0021-8693(05)80010-9}.
    \textsc{mr}: \burlalt{https://mathscinet.ams.org/mathscinet-getitem?mr=1322412}{1322412}.

\end{thebibliography}
\end{document}